\documentclass{article}

\usepackage{amsthm, amsmath, amssymb, mathtools}
\usepackage[T1]{fontenc}
\usepackage[utf8]{inputenc}
\usepackage{lmodern}
\usepackage[margin=3cm]{geometry}
\usepackage[shortlabels]{enumitem}
\usepackage{comment}
\usepackage{graphicx}
\usepackage{float,color}
\usepackage[normalem]{ulem}
\usepackage{epsf,epsfig,subfigure}
\usepackage[colorlinks=true]{hyperref}
\hypersetup{urlcolor=red,linkcolor=red ,citecolor=blue,colorlinks=true}

\usepackage{bbm, mathrsfs}

\usepackage{tikz}
\usetikzlibrary{intersections}

\newenvironment{system}[1]
{
	\left\{\begin{array}{#1}} 
	{ \end{array}\right.}

\makeatletter{}
\makeatother{}

\newcommand\dd{\mathrm{d}}
\newcommand\diff[1]{\frac{\dd\phantom{#1}}{\dd #1}}
\newcommand\R{\mathbb{R}}

\newcounter{stepnum}
\newcommand\step{\typeout{Use of step outside a stepping environment ! }}
\newenvironment{stepping}{
	\setcounter{stepnum}{1}
	\renewcommand\step{\medskip\noindent{\bf Step \thestepnum:~}\stepcounter{stepnum}}
}{\renewcommand\step{\typeout{Use of step outside a stepping environment !}}}

\newtheorem{thm}{Theorem}[section]
\newtheorem{lem}[thm]{Lemma}
\newtheorem{prop}[thm]{Proposition}

\theoremstyle{definition}

\newtheorem{defn}{\textbf{Definition}}[section]

\theoremstyle{remark}
\newtheorem{rem}{Remark}[section]
\newtheorem{assumption}{\textbf{Assumption}}

\allowdisplaybreaks

\numberwithin{equation}{section}

\begin{document}

\title{Sharp discontinuous traveling waves in a hyperbolic Keller--Segel equation}
\author{\textsc{Xiaoming Fu\thanks{The research of this author is supported by China Scholarship Council.} , Quentin Griette\footnote{Corresponding author.} and Pierre Magal}
	\\
	{\small \textit{Univ. Bordeaux, IMB, UMR 5251, F-33400 Talence, France}} \\
	{\small \textit{CNRS, IMB, UMR 5251, F-33400 Talence, France.}}\\
	{\small \textit{}} }
\maketitle

	\begin{abstract}
		In this work we describe a hyperbolic model with cell-cell repulsion with a dynamics in the population of cells. More precisely, we consider a population of cells producing a field (which we call ``pressure'') which induces a motion of the cells following the opposite of the gradient. The field indicates the local density of population and we assume that cells try to avoid crowded areas and prefer locally empty spaces which are far away from the carrying capacity. We analyze the well-posedness property of the associated Cauchy problem on the real line. We start from bounded initial conditions and we consider some invariant properties of the initial conditions such as the continuity, smoothness and monotony. We also describe in detail the behavior of the level sets near the propagating boundary of the solution and we find that an asymptotic jump is formed on the solution for a natural class of initial conditions. Finally, we prove the existence of sharp traveling waves for this model, which are particular solutions traveling at a constant speed, and argue that sharp traveling waves are necessarily discontinuous. This analysis is confirmed by numerical simulations of the PDE problem.
	\end{abstract}
	\bigskip
	\noindent	\textbf{Mathematics Subject Classification:} 92C17, 35L60, 35D30
	\section{Introduction}

	In this article we are concerned with the following diffusion equation with logistic source: 
	\begin{equation}\label{eq:main}
		\begin{cases}
			\partial _{t}u(t,x) -\chi\partial_x\bigl(u(t,x)\partial_x p(t,x)\bigr)=u(t,x)(1-u(t,x)), & t>0,\;x\in \mathbb{R},\\
			u(t=0, x)=u_0(x),
		\end{cases}\;\;
	\end{equation}
	where  $\chi>0$ is a {\it sensing coefficient} and $p(t, x) $ is an external pressure. Model \eqref{eq:main} describes the behavior of a population of cells $u(t,x)$ living in a one-dimensional habitat $x\in\mathbb R$, which undergo a logistic birth and death population dynamics, and in which individual cells follow the gradient of a field $p$. The constant $\chi$ characterizes the response of the cells to the effective gradient $p_x$. In this work we will consider the case where $p$ is itself determined by the state of the population $u(t,x)$ as
	\begin{equation}\label{eq:main-p}
		-\sigma^2\partial_{xx}p(t, x)+p(t,x)=u(t,x), \quad t>0, x\in\mathbb R.
	\end{equation}
	This corresponds to a scenario in which the field $p(t, x)$ is produced by the cells, diffuses to the whole space with diffusivity $\sigma^2$ (for $\sigma>0$), and vanishes at rate one. As a result cells are pushed away from crowded area to emptier region. 
	

A similar model has been successfully used in our recent work \cite{Fu2019} to describe the motion of cancer cells in a Petri dish in the context of cell co-culture experiments of Pasquier et al. \cite{Pas-Mag-Bou-Web-11}.  Pasquier et al. \cite{Pas-Mag-Bou-Web-11} cultivated two types of breast cancer cells to study the transfer of proteins between them in a study of multi-drug resistance.  It was observed that the two types of cancer cells  form segregated clusters of cells of each kind after a 7-day co-culture experiment (Figure \ref{fig:segregation} {\bf(a)}). In \cite{Fu2019}, the authors studied the segregation property of a model similar to \eqref{eq:main}--\eqref{eq:main-p}, set in a circular domain in two spatial dimensions $x\in\mathbb R^2$ representing a Petri dish. Starting from islet-like initial conditions representing cell clusters, it was numerically observed that the distribution of cells converges to a segregated state in the long run. 

One may observe that in such an experiment the cells are well fed. So there is no limitation for food. As explained in \cite{ducrot2011vitro}, the limitations are due to space and the contact inhibition of growth is involved. Therefore the right hand side of the \ref{eq:main} which is a logistic term (for simplicity) could possibly have the following 
$$
f(x)=\dfrac{\beta x}{1+\alpha x}-\mu x
$$ 
where $\beta$ is the division rate and $\mu$ is the mortality rate. We believe that our result hold for such a non-linearity and this is left for future work.

	\begin{figure}[H]
		\centering
		\begin{minipage}{0.48\textwidth}
			\noindent\centering
			{\bf (a)}\medskip 

			\noindent\includegraphics[width=\textwidth]{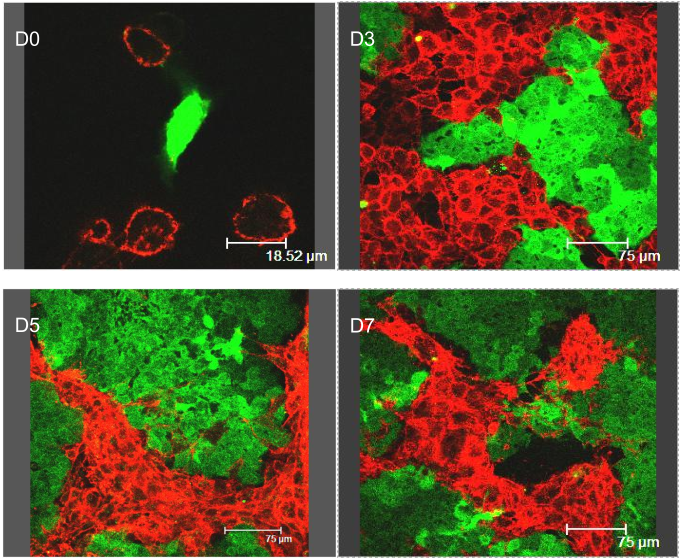}
		\end{minipage}\hfill
		\begin{minipage}{0.48\textwidth}
			\noindent\centering {\bf (b)}\medskip 
			
			\noindent\includegraphics[width=\textwidth]{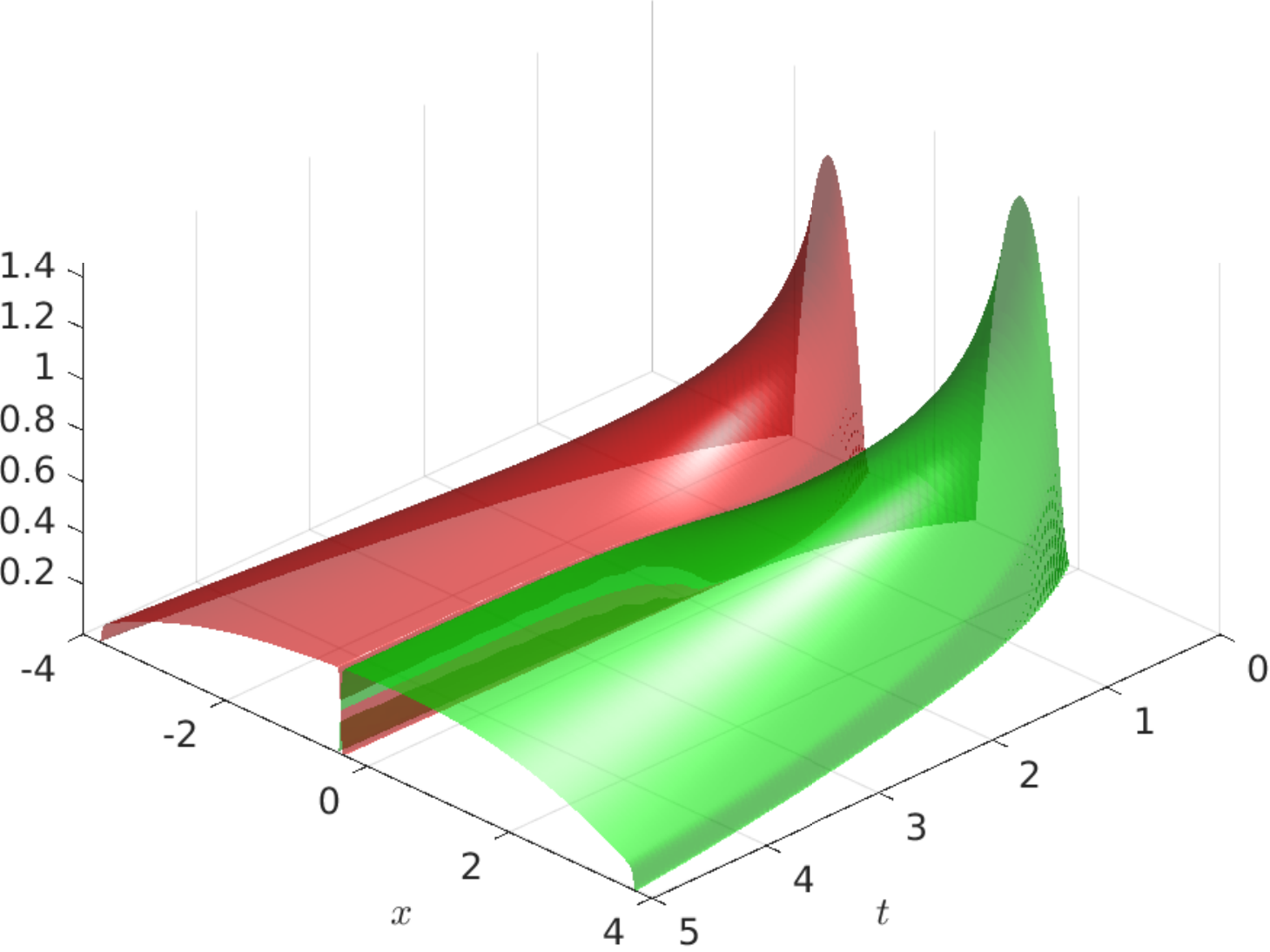}
		\end{minipage}
		\caption{\textit{\cite[Figure 1 and Figure 5 (b)]{Fu2019}. {\bf(a)} Direct immunodetection of P-gp transfers in co-cultures of sensitive (MCF-7) and resistant (MCF-7/Doxo) variants of the human breast cancer cell line. {\bf(b)} The temporal-spatial evolution of the two species in the 1D model. One can check that a discontinuity is forming near the front face of the green surface.}}\label{fig:segregation}
	\end{figure}

Strikingly, even before the two species come in contact, a sharp transition is formed between the space occupied by one species and the empty space being invaded (Figure \ref{fig:segregation} {\bf(b)}) and the distribution of cells looks like a very sharp traveling front. In an attempt to better understand the spatial behavior of cell populations growing in a Petri dish, in the present paper we investigate the mathematical properties of a simplified model for a single species on the real line. We are particularly interested in showing the existence of a sharp traveling front moving at a constant speed.

Recall that a traveling wave a special solution having the specific form 
$$
u(t, x)=U(x-ct), \text{ for a.e. } (t, x)\in\mathbb R^2,
$$
where the profile $U$ has the following behavior at $\pm\infty$:
$$
\lim_{z\to - \infty}U(z) =1,\quad \lim_{z\to \infty} U(z)=0. 
$$
The goal of this article is to investigate the sharp traveling namely 
$$
U(x)=0, \forall x > 0. 
$$
Moreover as it is represented in Figure \ref{Fig2}-(a) we will obtain the existence of such a wave with a discontinuity at $x=0$ for the profile $U$.  
\begin{figure}[H]
	\begin{center}
		\begin{tikzpicture}[line width=1pt]
		
		\draw (-7, 4.5) ..controls +(3, 0) and +(-1, 1) .. (-2, 3) -- (-2, 0);
		\path (-6.5, 2.5) node (line1) [anchor=north west] {discontinuous};
		\path (line1.south west) node [anchor=north west]{traveling wave};
		\draw[->] (-7, 0) -- (-1, 0) node [pos=0.5, below=5pt] {(a)};
		\draw[->] (-7, 0) -- (-7, 5);
		
		\draw (1, 4.5) ..controls +(3, 0) and +(-0.5, 2) .. (5, 2) ..controls +(0.5, -2) and +(-1, 0).. (6.5, 0);
		\path (1.5, 2.5) node (line1) [anchor=north west] {smooth};
		\path (line1.south west) node [anchor=north west] {traveling wave};
		\draw[->] (1, 0) -- (7, 0) node[pos=0.5, below=5pt] {(b)};
		\draw[->] (1, 0) -- (1., 5);
		\end{tikzpicture}
	\end{center}
	\vspace{-0.5cm}
	\caption{\textit{An illustration of two types of traveling wave solutions.}}
	\label{Fig2}
\end{figure}
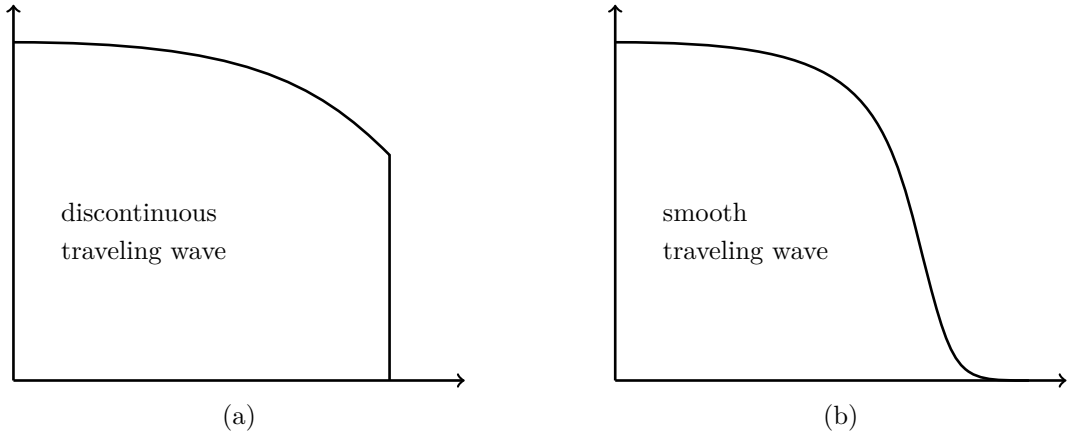

It can be noticed that, in the limit of slow diffusivity $\sigma\to 0$ (and under the simplifying assumption that $\chi=1$), we get $u(t,x)\equiv p(t,x)$ and \eqref{eq:main} is equivalent to an equation with  {\it porous medium-type diffusion} and logistic reaction
\begin{equation}\label{eq:porous_medium}
u_t - \frac{1}{2}(u^2)_{xx}=u(1-u).
\end{equation}
We refer to the monograph of V\'azquez \cite{Vazquez2007} for more result about porous medium equation. 

The propagation dynamics for this kind of equation was first studied, to the extent of our knowledge, by Aronson \cite{Aronson1980}, Atkinson, Reuter and Ridler-Rowe \cite{Atkinson1981}, and later by de Pablo and V\'azquez \cite{dePablo1991}, in the more general context of nonlinear diffusion 
\begin{equation}\label{eq:porous_medium_1}
u_t = (u^m)_{xx} + u(1-u), \text{ with } m>1.
\end{equation}
In Section \ref{Section3.3}, we observe that the discontinuous sharp traveling obtained in the present article converge (numerically) to the continuous profile described by Pablo and V\'azquez \cite{dePablo1991}.

	Our model can be included in the family of non-local advection models for cell-cell adhesion and repulsion. As pointed out by many biologists, cell-cell interactions do not only exist in a local scope, but a long-range interaction should be taken into account to guide the mathematical modeling.  Armstrong, Painter and Sherratt \cite{Armstrong2006} in their early work proposed a model (APS model) in which a local diffusion is added to the non-local attraction driven by the adhesion forces to describe the phenomenon of cell mixing, full/partial engulfment and complete sorting in the cell sorting problem. Based on the APS model, Murakawa and Togashi \cite{Murakawa2015} thought that the population pressure should come from the cell volume size instead of the linear diffusion. Therefore, the linear diffusion was changed into a nonlinear diffusion in order to capture the sharp fronts and the segregation in cell co-culture. Carrillo et al. \cite{Carrillo2019} recently proposed a new assumption on the adhesion velocity field and their model showed a good agreement in the experiments in the work of Katsunuma et al.  \cite{Katsunuma2016}. The idea of the long-range attraction and short-range repulsion can also be seen in the work of Leverentz, Topaz and Bernoff \cite{Leverentz2009}. They considered a non-local advection model to study the asymptotic behavior of the solution. By choosing a Morse-type kernel which follows the attractive-repulsive interactions, they found that the solution can asymptotically spread, contract (blow-up), or reach a steady-state.  Burger, Fetecau and Huang \cite{Burger2014} considered a similar non-local adhesion model with nonlinear diffusion, for which they investigated  the well-posedness and proved the existence of a compactly supported, non-constant steady state. Dyson et al. \cite{Dyson2010}  established the local existence of a classical solution for a non-local cell-cell adhesion model in spaces of uniformly continuous functions. For Turing and Turing-Hopf bifurcation due to the non-local effect, we refer to Ducrot et al. \cite{Duc-Fu-Mag-18} and Song et al. \cite{Song2019}. We also refer to Mogliner et al. \cite{Mogilner2003}, Eftimie et al. \cite{Eftimie2007}, Ducrot and Magal \cite{Duc-Mag-14}, Ducrot and Manceau \cite{ducrot2020one} for more topics on non-local advection equations. For the derivation of such models, we refer to the work of Bellomo et al. \cite{Bel-Bel-Nie-Sol-12} and Morale, Capasso and Oelschl\"ager \cite{Mor-Cap-Oel-05}.

%

	Discontinuous traveling waves in hyperbolic partial differential equations have appeared in the literature of  the recent few years. Travelling wave solutions with a shock or jump discontinuity have been found {\it e.g.} in models of malignant tumor cells (Marchant, Norbury and Perumpanani \cite{Marchant2000}, Harley {\it et al.} \cite{Harley2014} where the existence of discontinuous waves is proved by means of  geometric singular perturbation theory for ODEs) or chemotaxis (Landman, Pettet and Newgreen \cite{Landman2003} where both smooth and discontinuous traveling waves are found using phase plane analysis).
	Bouin, Calvez and Nadin \cite{Bouin-Calvez-Nadin14} considered the following  hyperbolic model
	$$
	\varepsilon^2\partial_{tt}\rho_\varepsilon+(1-\varepsilon^2F'(\rho_\varepsilon))\partial_t\rho_\varepsilon-\partial_{xx}\rho(t,x)=F(\rho_\varepsilon), 
	$$
	where the reaction term $ F $ is monostable. They identified two different regimes for the propagating behavior of solutions. In the first regime $\varepsilon^2F’(0)<1$, there exists a smooth traveling front (as in the Fisher-KPP case), whereas in the second regime $\varepsilon^2F’(0) > 1$ the traveling wave is discontinuous. In the critical case when $\varepsilon^2F'(0) = 1$, there exists a continuous traveling front with minimal speed $\sqrt{F'(0)}$ which may present a jump in the derivative.

	 The particular relation between the pressure $p(t,x)$ and the density $u(t,x)$ in \eqref{eq:main-p} strongly  reminds the celebrated model of chemotaxis studied by Patlak (1953) and Keller and Segel (1970) \cite{Pat-53, Kel-Seg-70, Kel-Seg-71} (parabolic-parabolic Keller-Segel model) and, more specifically, the parabolic-elliptic Keller-Segel model which is derived from the former by a quasi-stationary assumption on the diffusion of the chemical  \cite{Jae-Luc-92}. 
	{Indeed Equation \eqref{eq:main-p} can be formally obtained as the quasistatic  approximation of the following parabolic equation 
	\begin{equation*}
		\varepsilon \partial_t p(t, x)=\chi p_{xx}(t,x)+u(t,x)-p(t,x),
	\end{equation*}
	when $\varepsilon\to 0$.
	A rigorous derivation of the limit has been achieved in the  case of the Keller-Segel model by Carrapatoso and Mischler \cite{Car-Mis-17}. We also refer to Mottoni and Rothe \cite{mottoni1980singular} for such a result in the context of linear parabolic equations. We refer to \cite{Cal-Cor-08,Hil-Pai-09,Per-Dal-09} and the references therein for a mathematical introduction and biological applications.  In these models, the field $p(t,x)$ is interpreted as the concentration of a chemical produced by the cells rather than a physical pressure.}
	 One of the difficulties in attractive chemotaxis models is that two opposite forces compete to drive the behavior of the equations: the {\it diffusion} due to the random motion of cells, on the one hand, and on the other hand the {\it non-local advection} due to the {attractive} chemotaxis; the former tends to regularize and homogenize the solution, while the latter promotes cell aggregation and may lead to the blow-up of the solution in finite time \cite{Chi-84,Jae-Luc-92}. At this point let us mention that our study concerns {\it repulsive} cell-cell interaction with no diffusion, therefore no such blow-up phenomenon is expected in our study; however the absence of diffusion adds to the mathematical complexity of the study, because standard methods of reaction-diffusion equations cannot be employed here. Traveling waves for the (attractive) parabolic-elliptic Keller-Segel model were studied by Nadin, Perthame and Ryzhik \cite{Nadin2008}, who constructed these traveling wave by a bounded interval approximation of the 1D  system
	\begin{equation}\label{eq:K-S-attractant}
		\begin{system}{l}
			u_t + \chi\, (u p_x)_{x}= u_{xx} +u(1-u), \\
			-d\,p_{xx} +p = u,
		\end{system}
	\end{equation}
	set on the real line $x\in\mathbb R$, when the strength of the advection is not too strong $0< \chi<\min(1, d)$, and gave estimates on the speed of such a traveling wave: $ 2\leq c_* \leq 2 +\chi \sqrt{d}/(d-\chi) $.
	 
	 Since the pressure $p(t,x)$ is a non-local function of the density $u(t,x)$ in \eqref{eq:main-p}, the spatial derivative appears as a {\it non-local advection} term  in \eqref{eq:main}. In fact, our problem \eqref{eq:main}--\eqref{eq:main-p} can be rewritten as a transport equation in which the speed of particles is non-local in the density, 
	\begin{equation}\label{eq:main1}
		\begin{system}{l}
			\partial_t u(t,x) - \chi\partial_x(u(t,x)\partial_x(\rho\star u)(t,x))=u(t,x)(1-u(t,x)) \\
			u(t=0, x)=u_0(x),
		\end{system}
	\end{equation}
	where 
	\begin{equation}\label{eq:convol}
		\left( \rho\star u \right)(x)=\int_\mathbb{R}\rho(x-y)u(t,y)dy,\quad \rho(x) =\frac{1}{2\sigma}e^{-\frac{|x|}{\sigma}}.
	\end{equation} 
	 Traveling waves for a similar diffusive equation with logistic reaction have been investigated for quite general non-local kernels by Hamel and Henderson \cite{Hamel2020}, who considered the model 
	\begin{equation}\label{eq:adv-nonloc}
		u_t + (u\,(K\star u))_x=u_{xx}+u(1-u), 
	\end{equation}
	where $ K \in L^p(\R)$ is odd and  $p\in [1,\infty] $. Notice that the attractive parabolic-elliptic  Keller-Segel model \eqref{eq:K-S-attractant} is included in this framework by the particular choice  
	\begin{equation*}
		K(x) =- \chi\,{\rm sign}(x) e^{-|x|/\sqrt{d}}/\big(2\sqrt{d}\big). 
	\end{equation*}
	They proved a spreading result for this equation (initially compactly supported solutions to the Cauchy problem propagate to the whole space with constant speed) and explicit bounds on the speed of propagation. Diffusive non-local advection also appears in the context of swarm formation \cite{Mogilner1999}. Pattern formation for a model similar to \eqref{eq:adv-nonloc} by Ducrot, Fu and Magal \cite{Duc-Fu-Mag-18}. Let us mention that the inviscid equation \eqref{eq:main1} has been studied in a periodic cell by Ducrot and Magal \cite{Duc-Mag-14}. Other methods  have been established for conservative systems of interacting particles and their kinetic limit (Balagué et al. \cite{Bal-13}, Carrillo et al. \cite{Car-DiF-Fig-Lau-Sle-12}) based on gradient flows set on measure spaces; those are difficult to adapt here because of the logistic term. Finally we refer to \cite{Coville2005, Griette2019, Wang2007,Faye2015,Ducrot2014} for other examples of traveling waves in non-local reaction-diffusion equations.

	In this paper we focus on the particular case of \eqref{eq:main}--\eqref{eq:main-p} with $\sigma>0$ and $\chi>0$. The paper is organized as follows. In Section 2, we present our main results. In Section 3 we present  numerical simulations to illustrate our theoretical results.   In Section 4, we prove the propagation properties of the solution and describe the local behavior near the propagating boundary (see Proposition \ref{prop:separatrix} for definition), including the formation of a discontinuity for time-dependent solutions. In Section 5 we prove the existence of sharp traveling waves. We also prove that smooth traveling waves are necessarily positive, which shows that sharp traveling waves are necessarily singular (in this case, discontinuous). In particular, a solution starting from a compactly supported initial condition with polynomial behavior at the boundary can never catch such a smooth traveling wave.

\section{Main results and comments}

We begin by defining our notion of solution to equation \eqref{eq:main}.

\begin{defn}[Integrated solutions]\label{def:intsol}
	Let $u_0\in L^\infty(\mathbb R)$. A measurable function $u(t,x)\in L^\infty([0, T]\times \mathbb R)$ is an {\it integrated solution} to \eqref{eq:main} if the characteristic equation 
	\begin{equation}\label{eq:characteristics}
		\begin{system}{l}
			\frac{\dd\phantom{t}}{\dd t}h(t,x)=-\chi(\rho_x\star u)(t, h(t, x)) \\
			h(t=0, x)=x.
		\end{system}
	\end{equation}
	has a classical solution $h(t, x)$ ({\it i.e.} for each  $x\in\mathbb R$ fixed, the function $t\mapsto h(t,x) $ is in $ C^1([0, T], \mathbb R)$ and satisfies \eqref{eq:characteristics}), and for a.e. $x\in \mathbb R$, the function $t\mapsto u(t, h(t,x)) $ is in $C^1([0, T], \mathbb R)$ and satisfies 
	\begin{equation}\label{eq:integrated}
		\begin{system}{l}
			\frac{\dd \phantom{t}}{\dd t}u(t, h(t,x))=u(t, h(t,x))\big(1+\hat\chi(\rho\star u)(t, h(t, x))-(1+\hat\chi)u(t, h(t,x))\big), \\
			u(t=0, x)=u_0(x),
		\end{system}
	\end{equation}
	where $\hat\chi:=\frac{\chi}{\sigma^2}$.
\end{defn}
We define weighted space $L^1_\eta(\mathbb R)$ as follows
\begin{equation*}
	L^1_{\eta}(\R):= \bigg\lbrace f:\R\to \R \text{ measurable}\,\bigg| \int_\R|f(x)| e^{-\eta|x|}\dd x <\infty \bigg\rbrace. 
\end{equation*}
$L^1_\eta(\mathbb R)$  is a Banach space endowed with the norm 
$$
\Vert f\Vert_{L^1_\eta}:=\frac{\eta}{2}\int_{\mathbb R}|f(y)|e^{-\eta|y|}\dd y.
$$
 We first recall some results concerning the existence of integrated solutions for equation \eqref{eq:main} in Theorem \ref{thm:well-posed}, Proposition \ref{prop:regularity} and Theorem \ref{thm:ltb}. We prove those results in the companion paper \cite{Fu2020}.  
\begin{thm}[Well-posedness ]\label{thm:well-posed} 
	Let $u_0\in L^\infty_+(\mathbb R)$ and fix $\eta>0$. There exists $\tau^*(u_0)\in (0, +\infty]$ such that for all $\tau\in (0, \tau^*(u_0))$, there exists a unique integrated solution $u\in C^0([0, \tau], L^1_\eta(\mathbb R))$ to \eqref{eq:main} which satisfies $u(t=0, x)=u_0(x)$. Moreover $u(t,\cdot)\in L^\infty(\mathbb R)$ for each $t\in[0, \tau^*(u_0))$ and the map $t\in[0, \tau^*(u_0))\mapsto T_t u_0:=u(t,\cdot)$ is a semigroup which is continuous for the $L^1_\eta(\mathbb R)$-topology. 
	The map $u_0\in L^\infty(\mathbb R)\mapsto T_tu_0\in L^1_\eta(\mathbb R) $ is continuous.

	Finally, if $0\leq u_0(x)\leq 1$, then $\tau^*(u_0)=+\infty$ and $0\leq u(t, \cdot)\leq 1 $ for all $t>0$.
\end{thm}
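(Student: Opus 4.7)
The plan is to construct the integrated solution via a Banach fixed-point argument directly on the coupled system \eqref{eq:characteristics}--\eqref{eq:integrated}. Given a candidate $v \in C([0, \tau]; L^1_\eta(\R))$ with $\|v\|_{L^\infty([0,\tau]\times\R)} \leq M$, the field $-\chi(\rho_x \star v)(t, x)$ is bounded and Lipschitz continuous in $x$: the elementary identity $-\sigma^2\rho_{xx} + \rho = \delta_0$ gives that its distributional $x$-derivative equals $\hat\chi(v - \rho\star v) \in L^\infty$. The Cauchy-Lipschitz theorem then provides a unique $C^1$ characteristic flow $h^v(\cdot, x)$ defined on $[0, \tau]$. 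The ODE \eqref{eq:integrated} along this flow has bounded coefficients and is explicitly solvable as a logistic-type ODE, defining $\tilde u(t, h^v(t,x))$ in terms of $u_0(x)$ and $(\rho\star v)(s, h^v(s,x))$. Setting $\Phi(v) := \tilde u$, integrated solutions in the sense of Definition \ref{def:intsol} are precisely the fixed points of $\Phi$.

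The next step is to prove invariance of a ball and a contraction estimate, so that $\Phi$ has a unique fixed point. Invariance follows from the ODE comparison $\partial_t\tilde u \leq \tilde u(1 + \hat\chi M)$, which gives $\|\tilde u(t, \cdot)\|_{L^\infty} \leq \|u_0\|_{L^\infty} e^{(1+\hat\chi M)t}$, so for any $M > \|u_0\|_{L^\infty}$ the corresponding ball is preserved by $\Phi$ for $\tau$ small enough. For contraction, subtracting the equations for two candidates $v_1, v_2$ and applying Grönwall yields an estimate of the form
\begin{equation*}
	\|h^{v_1}(t, \cdot) - h^{v_2}(t, \cdot)\|_{L^\infty} + \|\tilde u_1(t, \cdot) - \tilde u_2(t, \cdot)\|_{L^1_\eta} \leq C_M \int_0^t \|v_1(s, \cdot) - v_2(s, \cdot)\|_{L^1_\eta} \, \dd s,
\end{equation*}
where $C_M$ comes from the continuity of convolution by $\rho$ and $\rho_x$ on $L^1_\eta(\R)$ (valid provided $\eta\sigma < 1$, with an explicit constant depending on $M$) and from the fact that the flow $h^v$ distorts the exponential weight by a bounded factor $e^{\eta C_M \tau}$. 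Choosing $\tau$ small enough makes $\Phi$ a contraction.

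The maximal time $\tau^*(u_0)$ is then defined as the supremum of times for which the fixed-point argument applies, and the standard continuation argument characterizes blow-up by $\|u(t, \cdot)\|_{L^\infty} \to \infty$ as $t \to \tau^*(u_0)$. The semigroup property and the continuity of $u_0 \mapsto T_tu_0$ in the $L^1_\eta$ topology both follow from uniqueness together with the same contraction estimate applied to two distinct initial data. For the invariance of $[0,1]$, the right-hand side of \eqref{eq:integrated} vanishes at $\tilde u = 0$ and equals $\hat\chi(\rho\star u - 1) \leq 0$ at $\tilde u = 1$ whenever $\|u\|_{L^\infty} \leq 1$ (using $\|\rho\|_{L^1} = 1$); a continuity-in-time bootstrap, or a small perturbation $\tilde u + \varepsilon$ to avoid the degeneracy at the zero barrier, rules out escape from $[0, 1]$. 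Combined with the blow-up criterion this yields $\tau^*(u_0) = +\infty$ in that case.

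The principal technical obstacle is the contraction estimate in the weighted $L^1_\eta$ topology: because the kernel $\rho_x$ is non-local, the flow $h^v$ depends on $v$ through global quantities, and one has to control simultaneously the spatial displacement $h^{v_1} - h^{v_2}$, the pointwise difference of the transported densities, and the distortion of the weight $e^{-\eta|x|}$ under the flow (which requires $\eta\sigma < 1$ to keep the convolution by $\rho_x$ continuous on $L^1_\eta(\R)$). The remaining ingredients are relatively standard combinations of Cauchy-Lipschitz, ODE comparison, and continuation arguments.
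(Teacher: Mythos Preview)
The paper does not contain its own proof of Theorem~\ref{thm:well-posed}: immediately before the statement the authors write that this result is proved in the companion paper \cite{Fu2020}, and no argument appears anywhere in the present manuscript. Consequently there is nothing in this paper to compare your proposal against.

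That said, your outline is a plausible route to the result and is in the spirit of what one expects for a transport equation with nonlocal velocity: set up a fixed-point map on the coupled system (flow $h^v$ plus logistic ODE along characteristics), obtain invariance of an $L^\infty$-ball by ODE comparison, and prove a contraction in $C([0,\tau];L^1_\eta)$ by Gr\"onwall. Two points are worth flagging. First, you introduce the restriction $\eta\sigma<1$ in order to make convolution by $\rho$ and $\rho_x$ continuous on $L^1_\eta(\R)$; the theorem as stated allows any $\eta>0$, so either this is a harmless omission in the statement (which is likely, since for $\eta\sigma\geq 1$ the quantity $\rho\star u$ need not even belong to $L^1_\eta$) or the companion paper argues differently, but in any case you should make explicit that your argument needs this constraint. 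Second, your contraction estimate is only sketched: the delicate part, as you yourself note, is controlling the difference $\tilde u_1-\tilde u_2$ in $L^1_\eta$ when the two densities are transported by \emph{different} flows $h^{v_1}$ and $h^{v_2}$. This requires either a change of variables to a common Lagrangian frame (and an estimate on the Jacobian difference), or an Eulerian reformulation; simply invoking Gr\"onwall hides exactly the step that carries the weight. Your treatment of the $[0,1]$-invariance is correct once one works at the fixed-point level (or restricts the iteration to candidates with $0\le v\le 1$); the perturbation ``$\tilde u+\varepsilon$'' is in fact unnecessary at the lower barrier, since $\tilde u=0$ is an equilibrium of the characteristic ODE and nonnegativity follows directly from uniqueness.
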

The next result concerns the preservation properties satisfied by the solutions of \eqref{eq:main} (see  \cite[Proposition 2.2]{Fu2020}). 
\begin{prop}[Regularity of solutions]\label{prop:regularity}
	Let $u(t, x)$ be an integrated solution to \eqref{eq:main}. 
	\begin{enumerate}
		\item if $u_0(x) $ is continuous, then $u(t, x)$ is continuous for each $t>0$. 
		\item if $u_0(x) $ is monotone, then $u(t, x)$ has the same monotony for each $t>0$.
		\item if $u_0(x)\in C^1(\mathbb R)$, then $u\in C^1([0, T]\times \mathbb R) $ and $u$ is then a classical solution to \eqref{eq:main}.
	\end{enumerate}
\end{prop}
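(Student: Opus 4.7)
The plan is to exploit the Lagrangian formulation of Definition \ref{def:intsol}: the solution is entirely determined by the characteristic flow $h(t,\cdot)$ and the Lagrangian value $v(t,x):=u(t,h(t,x))$, which satisfies the logistic-type ODE \eqref{eq:integrated}. Because $u$ remains bounded on $[0,\tau^*(u_0))$ by Theorem \ref{thm:well-posed}, the elliptic identity $-\sigma^2 p_{xx}+p=u$ gives control of $\|p\|_{W^{2,\infty}}$ by $\|u\|_{L^\infty}$; in particular $p_x$ is uniformly Lipschitz in $x$, the vector field in \eqref{eq:characteristics} is Lipschitz in $x$, and standard ODE theory guarantees that $h(t,\cdot)\colon\mathbb R\to\mathbb R$ is a bi-Lipschitz orientation-preserving homeomorphism. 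Part~(1) then follows by writing $u(t,y)=v(t,h^{-1}(t,y))$: since $y\mapsto h^{-1}(t,y)$ is continuous, it suffices to show that $x\mapsto v(t,x)$ is continuous, which is a consequence of continuous dependence for the ODE \eqref{eq:integrated}, whose coefficient $(\rho\star u)(s,h(s,x))$ depends continuously on $x$ and whose initial datum $u_0(x)$ is continuous by assumption.

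For Part~(2), I would argue at the level of the Picard iteration underlying the proof of Theorem~\ref{thm:well-posed}. Given a non-increasing iterate $u^n$, the next iterate $u^{n+1}$ solves the \emph{linear} transport-reaction equation with frozen coefficient $\partial_x(\rho\star u^n)$ via its own flow $h^n$, and $v^{n+1}(t,x):=u^{n+1}(t,h^n(t,x))$ obeys the scalar logistic ODE $\dot v=v(1+\hat\chi P^n-(1+\hat\chi)v)$ with parameter $P^n(t,x):=(\rho\star u^n)(t,h^n(t,x))$. Three monotonicity facts combine: (i) convolution with the symmetric non-negative kernel $\rho$ preserves non-increasingness, so composing with the order-preserving $h^n(t,\cdot)$ makes $x\mapsto P^n(t,x)$ non-increasing; (ii) a Gr\"onwall comparison on the difference of two logistic-ODE solutions shows that $P\mapsto v$ is monotone when $v_0\geq 0$; (iii) the same ODE is monotone in $v_0\geq 0$ by standard comparison. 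Combining these with $u_0(x_1)\geq u_0(x_2)$ and $P^n(\cdot,x_1)\geq P^n(\cdot,x_2)$ for $x_1<x_2$ yields $u^{n+1}(t,\cdot)$ non-increasing; hence the $L^1_\eta$-limit of the iterates is non-increasing a.e., and combined with Part~(1) after a mollification approximation of $u_0$ this yields the statement everywhere.

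For Part~(3), regularity bootstraps from the Lagrangian representation. Since $u\in C^0$ by Part~(1), the identity $p_{xx}=(p-u)/\sigma^2$ shows that $p$ has continuous second $x$-derivative, so the vector field in \eqref{eq:characteristics} is $C^1$ in $x$. The variational equation $\frac{\dd}{\dd t}h_x=-\chi p_{xx}(t,h)\,h_x$ with $h_x(0)=1$ then yields $h_x>0$ and $h\in C^1([0,T]\times\mathbb R)$, and the implicit function theorem gives $h^{-1}\in C^1$. The Lagrangian value $v(t,x)=u(t,h(t,x))$ inherits $C^1$-regularity because \eqref{eq:integrated} has $C^1$-in-$x$ coefficients and $u_0\in C^1$; composition gives $u\in C^1$, and the classical PDE \eqref{eq:main} is then recovered by applying the chain rule to $v=u\circ h$ together with \eqref{eq:integrated} and $h_t=-\chi p_x\circ h$.

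The main obstacle is Part~(2): a direct PDE differentiation in $x$ produces an ODE for $u_x$ along characteristics with a source term of the form $\hat\chi u\,p_x$ whose sign depends on the monotonicity of $u$ itself, and so cannot be controlled without circular reasoning. Passing through the Picard iteration circumvents this by isolating a genuinely linear problem at each step, for which the monotonicity comparisons apply unambiguously.
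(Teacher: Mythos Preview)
The paper does not actually prove this proposition: the statement is quoted from the companion paper \cite{Fu2020}, and no argument is given here. So there is nothing to compare against in the present paper.

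That said, your outline is essentially sound. Parts (1) and (3) are routine once one observes that $p_{xx}=(p-u)/\sigma^2\in L^\infty$ makes the characteristic vector field Lipschitz (resp.\ $C^1$) in $x$; the Lagrangian representation $u(t,y)=v(t,h^{-1}(t,y))$ then transports the regularity of $u_0$ and of the ODE coefficients. For Part (2) you are right that a direct differentiation is circular: the sign of the source term $\hat\chi\,u\,P_x$ in the equation for $V_x$ along characteristics depends on the very monotonicity you want to prove. Your cure via the Picard iteration is the natural one and your three comparison facts (i)--(iii) are correct; in particular the logistic ODE $\dot v=v(1+\hat\chi P-(1+\hat\chi)v)$ is monotone both in $P$ and in the initial datum for nonnegative solutions, and convolution with the even nonnegative kernel $\rho$ preserves monotonicity. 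Two small points to tighten: first, you are tacitly assuming that the fixed-point scheme behind Theorem~\ref{thm:well-posed} is exactly the one you describe (freeze the advection field and the nonlocal coefficient, keep the logistic self-interaction); you should either check this against \cite{Fu2020} or note that your iteration is itself a contraction on short time intervals, so that its limit coincides with the unique integrated solution. Second, the final sentence of Part (2) is muddled: if $u_0$ is monotone and continuous, the iterates are monotone and their $L^1_\eta$-limit is monotone a.e., hence everywhere by Part (1); for a merely measurable monotone $u_0$, approximate by mollification, use the continuity of $u_0\mapsto T_tu_0$ from Theorem~\ref{thm:well-posed}, and pass to the a.e.\ limit. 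Stating it in that order removes the ambiguity.
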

In this following theorem we consider the long-time behavior of some solutions to \eqref{eq:main} (see  \cite[Theorem 2.3]{Fu2020}).  
\begin{thm}[Long-time behavior]\label{thm:ltb}
	Let $0\leq u_0(x)\leq 1$ be a nontrivial non-negative initial condition and $u(t, x)$ be the corresponding integrated solution. Then $0\leq u(t, x)\leq 1$ for all $t>0$ and $x\in\mathbb R$. If moreover there exists $\delta>0$ such that $\delta\leq u_0(x)\leq 1$ then  
	\begin{equation*}
		u(t, x) \to 1,\text{ as } t\to\infty 
	\end{equation*}
	and the convergence holds  uniformly in $x\in \mathbb R$. 
\end{thm}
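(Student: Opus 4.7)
My plan is to note first that the bound $0 \leq u(t,x) \leq 1$ for all $t > 0$ and $x \in \R$ follows immediately from the last statement of Theorem~\ref{thm:well-posed}; only the uniform convergence $u(t,x) \to 1$ under $\delta \leq u_0 \leq 1$ requires a dedicated argument. The strategy is to work along the characteristics supplied by Definition~\ref{def:intsol}, extract a uniform-in-space lower bound via a Bernoulli linearisation of the ODE \eqref{eq:integrated}, and then bootstrap this bound up to $1$.

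Fix an arbitrary $x_0 \in \R$ and set $v(t) := u(t, h(t, x_0))$ and $w(t) := (\rho \star u)(t, h(t, x_0))$. Equation \eqref{eq:integrated} then reads
\begin{equation*}
v'(t) = v(t)\bigl(1 + \hat\chi w(t) - (1+\hat\chi) v(t)\bigr),\qquad v(0) = u_0(x_0) \geq \delta.
\end{equation*}
Since $\rho \geq 0$ and $\int_\R \rho\,\dd x = 1$, the a priori bound $0 \leq u \leq 1$ yields $0 \leq w(t) \leq 1$ and, more importantly, $w(t) \geq m(t) := \inf_{y \in \R} u(t, y)$. The Bernoulli substitution $Q := 1/v$ linearises the ODE into $Q' + (1+\hat\chi w) Q = 1+\hat\chi$ with $Q(0) \leq 1/\delta$, whose explicit solution is
\begin{equation*}
Q(t) = Q(0)\, e^{-\int_0^t (1+\hat\chi w(s))\,\dd s} + (1+\hat\chi) \int_0^t e^{-\int_s^t (1+\hat\chi w(r))\,\dd r}\,\dd s.
\end{equation*}
Using only $w \geq 0$ one gets $Q(t) \leq \tfrac{1}{\delta} e^{-t} + (1+\hat\chi)(1 - e^{-t})$, hence $\liminf_{t \to \infty} v(t) \geq 1/(1+\hat\chi)$ uniformly in $x_0$.

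To push this to $1$, define $\alpha_0 := 0$ and $\alpha_{n+1} := (1+\hat\chi\,\alpha_n)/(1+\hat\chi)$, so that $1 - \alpha_{n+1} = \frac{\hat\chi}{1+\hat\chi}(1 - \alpha_n)$ and $\alpha_n \to 1$ geometrically. I proceed by induction: assume $\liminf_{t \to \infty} m(t) \geq \alpha_n$. For any $\epsilon > 0$, pick $T_n$ with $m(t) \geq \alpha_n - \epsilon$ for $t \geq T_n$; along every characteristic $w(t) \geq \alpha_n - \epsilon$ for $t \geq T_n$, so rerunning the Bernoulli computation on $[T_n, \infty)$ gives
\begin{equation*}
\liminf_{t\to\infty} v(t) \geq \frac{1+\hat\chi(\alpha_n - \epsilon)}{1+\hat\chi} = \alpha_{n+1} - \frac{\hat\chi\,\epsilon}{1+\hat\chi},
\end{equation*}
uniformly in $x_0$. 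Since the velocity field $-\chi(\rho_x \star u)$ is bounded, the map $x_0 \mapsto h(t, x_0)$ is a homeomorphism of $\R$, so the characteristics exhaust $\R$; taking the infimum over $x_0$ and letting $\epsilon \to 0$ yields $\liminf_{t \to \infty} m(t) \geq \alpha_{n+1}$, closing the induction. Passing $n \to \infty$ gives $\liminf m(t) \geq 1$, which combined with $m(t) \leq 1$ delivers $m(t) \to 1$, i.e., $u(t, \cdot) \to 1$ uniformly on $\R$.

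The main obstacle I anticipate is ensuring that every quantitative bound in the Bernoulli argument is genuinely uniform in $x_0$: the exponential decay rate and the ``restart'' value $Q(T_n) = 1/v(T_n)$ have to be controlled independently of the characteristic. Fortunately the only $x_0$-dependent input is $v(0) = u_0(x_0) \in [\delta, 1]$, giving the uniform bound $Q(0) \leq 1/\delta$, and at each restart $v(T_n) \geq m(T_n) \geq \alpha_n - \epsilon$ is again uniform in $x_0$. A secondary subtlety is that $m(t)$ is merely an infimum with no a priori regularity, but the argument sidesteps this by using only the pointwise inequality $w(t) \geq m(t)$ and the asymptotic quantity $\liminf_{t \to \infty} m(t)$, never a derivative of $m$.
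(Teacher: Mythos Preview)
The paper does not actually contain a proof of this statement; Theorem~\ref{thm:ltb} is explicitly deferred to the companion paper \cite{Fu2020} (see the sentence preceding Theorem~\ref{thm:well-posed}). So there is nothing in the present paper to compare against, and your proposal must be assessed on its own merits.

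Your argument is correct and complete. The key points all check out: the Bernoulli linearisation of \eqref{eq:integrated} is exact; the bound $Q(t)\leq \tfrac{1}{\delta}e^{-t}+(1+\hat\chi)(1-e^{-t})$ is uniform in $x_0$ because the only $x_0$-dependence enters through $Q(0)\leq 1/\delta$; the restart values $Q(T_n)\leq 1/(\alpha_n-\epsilon)$ are likewise uniform for $n\geq 1$; and the recursion $\alpha_{n+1}=(1+\hat\chi\alpha_n)/(1+\hat\chi)$ converges to its unique fixed point $1$. The surjectivity of $x_0\mapsto h(t,x_0)$ that you need to pass from a uniform bound on $v_{x_0}(t)$ to a bound on $m(t)$ is also fine: the velocity $-\chi(\rho_x\star u)$ is bounded (since $\rho_x\in L^1$ and $u\in L^\infty$), so $|h(t,x_0)-x_0|$ grows at most linearly in $t$ and $h(t,\cdot)$ is onto by the intermediate value theorem; injectivity comes from the Lipschitz regularity of the velocity in $x$ (indeed $\partial_x(\rho_x\star u)=\sigma^{-2}(\rho\star u - u)\in L^\infty$).

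One cosmetic point: in the inductive step you write ``rerunning the Bernoulli computation on $[T_n,\infty)$ gives $\liminf_{t\to\infty}v(t)\geq\alpha_{n+1}-\hat\chi\epsilon/(1+\hat\chi)$ uniformly in $x_0$''. To extract a uniform-in-$x_0$ lower bound on $m(t)$ for large $t$ (rather than just on each $\liminf v_{x_0}(t)$), you should make explicit that the exponential transient in the Bernoulli formula decays at the $x_0$-independent rate $1+\hat\chi(\alpha_n-\epsilon)$ from the $x_0$-independent initial bound $Q(T_n)\leq 1/(\alpha_n-\epsilon)$, so that one can choose a single $T'>T_n$ after which $v_{x_0}(t)\geq\alpha_{n+1}-\hat\chi\epsilon/(1+\hat\chi)-\eta$ for \emph{every} $x_0$. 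You gesture at this in your final paragraph, and it is indeed straightforward, but spelling it out would remove any ambiguity.
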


 We now arrive at the main interest of the paper, which is to describe the spatial dynamics of solutions to \eqref{eq:main}--\eqref{eq:main-p}. To get insight about the asymptotic propagation properties  of  the solutions, we focus on initial conditions  whose support is bounded towards $+\infty$. If the behavior of the initial condition in a neighbourhood of the boundary of the support is polynomial, we can establish a precise estimate of the  location of the level sets  relative to the position of the rightmost positive point.  Our first assumption requires that the initial condition is supported in $(-\infty, 0]$.
\begin{assumption}[Initial condition]\label{assum:initcond-basic}
	We assume that $u_0(x)$ is a continuous function satisfying 
	\begin{align*}
		0\leq u_0(x)&\leq 1, & &\forall x\in\mathbb R, \\
		u_0(x)&=0,	& &\forall x\geq 0, \\
		u_0(x)&>0, & &\forall x\in (-\delta_0, 0),
	\end{align*}
	for some $\delta_0>0$.
\end{assumption}
Under this assumption we show that $u$ is propagating to the right. 
\begin{prop}[The separatrix]\label{prop:separatrix}
	Let $u_0(x)$ satisfy Assumption \ref{assum:initcond-basic}, and $h^*(t):=h(t,0)$ be the separatrix. Then $h^*(t)$ stays at the rightmost boundary of the support of $u(t, \cdot)$, i.e.
	\begin{enumerate}[label={\rm (\roman*)}]
		\item \label{item:ultimatelytrivial}
			we have
			\begin{equation}
				\qquad u(t, x)=0\text{ for all }  x\geq h^*(t),
			\end{equation}
		\item \label{item:hboundary}
			for each $t>0$ there exists $\delta>0$ such that  
			\begin{equation}
				 \qquad u(t, x)>0 \text{ for all }x\in \big(h^*(t)-\delta, h^*(t)\big).
			\end{equation}
	\end{enumerate}
	Moreover, $u$ is propagating to the right i.e.
	\begin{equation*}
		\qquad\frac{\dd\phantom{t}}{\dd t} h^*(t)>0 \text{ for all }t>0.
	\end{equation*}
\end{prop}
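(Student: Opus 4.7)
The plan is to exploit two structural features of Definition \ref{def:intsol}: the characteristic map $x\mapsto h(t,x)$ is a strictly increasing homeomorphism of $\mathbb R$, and the ODE \eqref{eq:integrated} for $u$ along characteristics is linear in $u$, so values are transported along characteristics (vanishing whenever they start from zero, remaining strictly positive whenever they start positive). Because $\rho_x\in L^\infty(\mathbb R)$ and $u$ is bounded, the drift $-\chi(\rho_x\star u)(t,\cdot)$ is globally Lipschitz in space, and standard ODE theory then yields that $x\mapsto h(t,x)$ is an orientation-preserving homeomorphism of $\mathbb R$.

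For item (i), since $u_0\equiv 0$ on $[0,\infty)$, the ODE for $u$ along characteristics in \eqref{eq:integrated} gives $u(t,h(t,x))=0$ for a.e.\ $x\geq 0$. As $h(t,[0,\infty))=[h^*(t),\infty)$ by monotonicity, we obtain $u(t,\cdot)=0$ a.e.\ on $[h^*(t),\infty)$. Since $u_0$ is continuous (Assumption \ref{assum:initcond-basic}), Proposition \ref{prop:regularity}(1) guarantees that $u(t,\cdot)$ is continuous, upgrading the a.e.\ vanishing to a pointwise one. For item (ii), the same linear ODE yields $u(t,h(t,x))>0$ whenever $u_0(x)>0$; applied to $x\in(-\delta_0,0)$ and combined with $h(t,(-\delta_0,0))=(h(t,-\delta_0),h^*(t))$, the claim holds with $\delta:=h^*(t)-h(t,-\delta_0)>0$.

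For item (iii), we differentiate using the characteristic equation \eqref{eq:characteristics}:
\[
\frac{\dd\phantom{t}}{\dd t}h^*(t)=-\chi\,(\rho_x\star u)(t,h^*(t)).
\]
An explicit computation gives $\rho_x(z)=-\frac{\mathrm{sgn}(z)}{2\sigma^2}e^{-|z|/\sigma}$. Since item (i) forces $u(t,y)=0$ for $y\geq h^*(t)$, only $y<h^*(t)$ contributes to the convolution, on which $\mathrm{sgn}(h^*(t)-y)=+1$, hence
\[
(\rho_x\star u)(t,h^*(t))=-\frac{1}{2\sigma^2}\int_{-\infty}^{h^*(t)}e^{-(h^*(t)-y)/\sigma}\,u(t,y)\,\dd y.
\]
By item (ii), $u(t,\cdot)$ is strictly positive on the nonempty interval $(h^*(t)-\delta,h^*(t))$ contained in the domain of integration, so this integral is strictly positive, and $\frac{\dd\phantom{t}}{\dd t}h^*(t)>0$ follows.

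The chief subtlety, rather than a genuine obstacle, is the upgrade from the almost-everywhere information encoded in Definition \ref{def:intsol} to the pointwise statements in items (i)--(ii); this relies crucially on the continuity of $u(t,\cdot)$ from Proposition \ref{prop:regularity}, which rules out an isolated positive spike of $u$ sitting to the right of $h^*(t)$. Once the support of $u(t,\cdot)$ is pinned to $(-\infty,h^*(t)]$ with a positive one-sided neighborhood of $h^*(t)$, the sign of the nonlocal advection at $h^*(t)$ is dictated entirely by the one-sided asymmetry of the exponential kernel $\rho$.
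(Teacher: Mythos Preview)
Your proof is correct and follows essentially the same route as the paper's: characteristics form an order-preserving homeomorphism, the ODE along characteristics preserves vanishing and positivity of the initial datum, and the sign of $\frac{\dd}{\dd t}h^*(t)$ follows from the one-sided support of $u(t,\cdot)$ together with the explicit oddness of $\rho_x$. One small terminological slip: the ODE \eqref{eq:integrated} is not linear in $u$ (there is a $-(1+\hat\chi)u^2$ term), but your conclusion stands because it has the form $w'=w\cdot g(t,w)$ with $g$ bounded, so Cauchy--Lipschitz uniqueness still gives sign preservation; the paper instead bounds the right-hand side below by the logistic ODE $v'=v(1-(1+\hat\chi)v)$ to reach the same conclusion.
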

We precise the behavior of the initial condition in a neighbourhood of $0$ and estimate the steepness of $u$ in positive time.
\begin{assumption}[Polynomial behavior near 0]\label{assum:initcond-steep}
	In addition to Assumption \ref{assum:initcond-basic}, we require that there exists $\alpha\geq 1$ and $\gamma>0$ such that
	\begin{equation*}
		u_0(x)\geq \gamma |x|^\alpha, \qquad \forall x\in (-\delta, 0).
	\end{equation*}
\end{assumption}

\begin{thm}[Formation of a discontinuity]\label{thm:discont} 
	Let $u_0(x)$ satisfy Assumptions \ref{assum:initcond-basic} and \ref{assum:initcond-steep} and $u(t,x)$ solve  \eqref{eq:main} with $u(t=0, x)=u_0(x)$. For all $\delta>0$ we have 
	\begin{equation}\label{eq:discont}
		\limsup_{t\to+\infty}\sup_{x\in (h^*(t)-\delta, h^*(t))}u(t,x)\geq \frac{1}{1+\hat\chi+\alpha\chi}>0.
	\end{equation}
	More precisely, define the level set
	\begin{equation*}
		\xi(t, \beta):=\sup\{x\in\mathbb R\,|\, u(t, x)= \beta\},
	\end{equation*} 
	for all $t>0$ and  $ 0<\beta<\frac{1}{1+\hat\chi+\alpha\chi}  $.
	Then, for each $0<\beta<\frac{1}{1+\hat\chi+\alpha\chi}$, the distance between $\xi(t, \beta)$ and the separatrix is decaying exponentially fast: 
	\begin{equation}\label{eq:levelset}
		h^*(t)-\left(\frac{\beta}{\gamma}\right)^{\frac{1}{\alpha}}e^{-\frac{\eta}{2\alpha}t} \leq \xi(t, \beta)\leq h^*(t),
	\end{equation}
	where $\eta\in (0,1) $ is given in Proposition \ref{prop:levelset} and $\hat\chi=\frac{\chi}{\sigma^2}$.
\end{thm}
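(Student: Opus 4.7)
The plan is to construct a specific characteristic that stays close to the separatrix and carries a value bounded below by $\beta$. First, I would set $x_0 := -(\beta/\gamma)^{1/\alpha}$, which lies in $(-\delta, 0)$ provided $\beta$ is small enough; Assumption \ref{assum:initcond-steep} then yields $u_0(x_0) \geq \gamma|x_0|^\alpha = \beta$. The characteristic starting from $x_0$ gives us a natural candidate: the value $v(t) := u(t, h(t, x_0))$ starts at $\geq\beta$ and we wish to compare $h(t, x_0)$ to $h^*(t) = h(t, 0)$.

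The heart of the argument is to show the two claims
$$v(t) \geq \beta \quad\text{and}\quad h^*(t)-h(t,x_0) \leq |x_0|\, e^{-\eta t/(2\alpha)}$$
jointly for all $t>0$. For the first, I would invoke the ODE along characteristics from Definition \ref{def:intsol},
$$\frac{\dd v}{\dd t} = v\bigl[1+\hat\chi\, p(t,h(t,x_0))-(1+\hat\chi)v\bigr],$$
and argue that at the level $v = \beta$ the right-hand side stays non-negative: using $p\geq 0$ one immediately gets $v\geq \beta$ whenever $\beta<1/(1+\hat\chi)$, and the sharper threshold $1/(1+\hat\chi+\alpha\chi)$ will come out of coupling this ODE with the equation for the gap $w(t) := h^*(t)-h(t,x_0)$. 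For the second, I would differentiate $w$ and use $-\sigma^2 p_{xx}+p=u$ to rewrite
$$\frac{\dd w}{\dd t} = -\chi\bigl[p_x(t,h^*(t))-p_x(t,h(t,x_0))\bigr] = -\hat\chi\int_{h(t,x_0)}^{h^*(t)}\bigl(p(t,y)-u(t,y)\bigr)\dd y.$$
Since $u(t,h^*(t))=0$ while $p$ is bounded below by a positive quantity (because $u$ has mass in the bulk, which tends to $1$ by Theorem \ref{thm:ltb}), the integrand is positive near $h^*(t)$, and a quantitative lower bound of the form $p-u \geq \eta/2$ in a neighbourhood of the separatrix would yield exponential contraction of $w$. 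This quantitative estimate is exactly what I would import from Proposition \ref{prop:levelset}; the factor $1/\alpha$ in the exponent reflects the polynomial flatness of the initial profile and hence of $u(t,\cdot)$ near $h^*(t)$, which slows down the contraction.

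Once both claims are in place, the level-set bound follows by continuity: $u(t, h(t,x_0)) \geq \beta$ and $u(t, h^*(t)) = 0$ force the existence of some $\xi \in [h(t,x_0),h^*(t)]$ with $u(t,\xi)=\beta$, and since $\xi(t,\beta)$ is the supremum of such points,
$$\xi(t,\beta) \geq h(t,x_0) = h^*(t)-w(t) \geq h^*(t)-(\beta/\gamma)^{1/\alpha} e^{-\eta t/(2\alpha)},$$
which is \eqref{eq:levelset}. The limsup statement \eqref{eq:discont} then follows by letting $\beta$ approach $1/(1+\hat\chi+\alpha\chi)$ from below and observing that $h^*(t)-\xi(t,\beta) \to 0$ exponentially, so for any $\delta>0$ the supremum of $u$ on $(h^*(t)-\delta,h^*(t))$ eventually exceeds $\beta$.

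The main obstacle is closing the coupled argument for step 2/3: keeping $v\geq \beta$ requires a lower bound on $p(t,h(t,x_0))$, which in turn depends on how close $h(t,x_0)$ is to $h^*(t)$ (i.e.\ on $w(t)$), while the decay of $w$ requires controlling $p-u$ uniformly in a shrinking window—this is where the sharper threshold $1/(1+\hat\chi+\alpha\chi)$ and the rate $\eta/(2\alpha)$ emerge, and where I would rely on Proposition \ref{prop:levelset} to quantify the behaviour of $u$ near the propagating boundary.
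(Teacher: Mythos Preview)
Your proposal rests on a claim that does not hold: that the gap $w(t)=h^*(t)-h(t,x_0)$ between a fixed characteristic and the separatrix \emph{contracts} exponentially. Your identity $\frac{\dd w}{\dd t}=-\hat\chi\int_{h(t,x_0)}^{h^*(t)}(p-u)\,\dd y$ is correct, but the integrand $p-u$ has no definite sign on that interval (near $h^*(t)$ one has $u\to 0<p$, while deeper inside $u$ can exceed its own spatial average $p$), and in fact the paper's Lemma~\ref{lem:div-sep} only yields the \emph{upper} bound $\frac{\dd w}{\dd t}\le \chi w\sup u$, i.e.\ characteristics may \emph{diverge} from $h^*(t)$ at a controlled rate. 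Your appeal to Proposition~\ref{prop:levelset} for the missing lower bound is circular, since that proposition is precisely the level-set estimate \eqref{eq:levelset} you are trying to prove.

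The mechanism in the paper is different: it never claims that a single characteristic approaches $h^*(t)$. Instead, over a short time step $[0,t^*]$, the characteristic issued from $\xi(0,\beta)$ drifts \emph{away} from $h^*$ by at most a factor $e^{bt}$ (with $b=\frac{(1-\eta/2)\chi}{1+\hat\chi+\alpha\chi}$ via Lemma~\ref{lem:div-sep}), while the value of $u$ carried along it \emph{grows} by a factor $e^{at}$ (with $a=1-\frac{(1+\hat\chi)(1-\eta/2)}{1+\hat\chi+\alpha\chi}$ via \eqref{eq:integrated}). The level set $\xi(t,\beta)$ is therefore to the \emph{right} of that characteristic, and combining the two effects with the initial polynomial bound $\xi(0,\beta)\ge -(\beta/\gamma)^{1/\alpha}$ gives a net contraction with rate $\frac{1}{\alpha}(a-\alpha b)=\frac{\eta}{2\alpha}$. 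This is then iterated (Step~2 of Proposition~\ref{prop:levelset}). The limsup statement \eqref{eq:discont} is proved separately (Proposition~\ref{prop:growth}) by a direct contradiction argument using the same two ingredients, not by letting $\beta\uparrow\frac{1}{1+\hat\chi+\alpha\chi}$ in \eqref{eq:levelset}. The crux you are missing is that the exponential approach of $\xi(t,\beta)$ to $h^*(t)$ comes from the \emph{growth of $u$ along characteristics outpacing the divergence of characteristics}, not from any contraction of $w$.
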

\begin{figure}[H]
	\begin{center}
		\begin{tikzpicture}[line width=1pt]
			\draw[name path=front1] (0, 3.5) ..controls +(2, -0.5) and +(0, 1) .. (3,0)  node [anchor=north west]{$h^*(t_1)$};
			\draw[name path=front2] (5, 3.5) ..controls +(2, 0) and +(0, 3) .. (8,0) node[anchor=north west] {$h^*(t_2)$};
			\draw[name path=beta, dashed] (0, 1.5) -- (10, 1.5) node[anchor=south east]{$\beta$};
			\path[name path=post] (0, 3) -- (10, 3);

			\draw[name intersections={of= front1 and beta}] (intersection-1) -- +(0, -1.5) node[anchor=north east] {$\xi(t_1, \beta)$} ;
			\path[name intersections={of= front1 and post}] (intersection-1) node[anchor=south west]  {$t=t_1$}; 

			\draw[name intersections={of= front2 and beta}] (intersection-1) -- +(0, -1.5) node[anchor=north east] {$\xi(t_2, \beta)$} ;
			\path[name intersections={of= front2 and post}] (intersection-1) node[anchor=south west]  {$t=t_2$}; 

			\draw[->](0, 0) -- (10, 0);
			\draw[->](0, 0) -- (0, 4);

		\end{tikzpicture}
	\end{center}
\vspace{-0.5cm}
	\caption{\textit{A cartoon for the formation of the discontinuity. Here we choose $ t_1<t_2 $ and $ \xi(t,\beta),t=t_1,t_2 $ are the level sets. Theorem \ref{thm:discont} proves that  when Assumptions \ref{assum:initcond-basic} and \ref{assum:initcond-steep} are satisfied, then the distance $ |\xi(t, \beta)-h^*(t)| $  converges to 0 exponentially fast.}}
\end{figure}
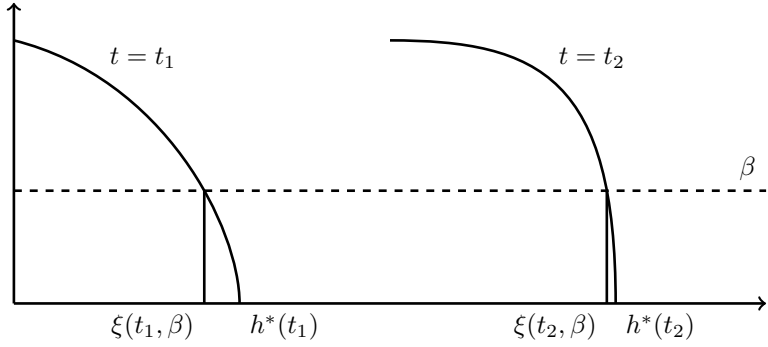
In particular, the profile $u(t, x)$ forms a discontinuity near the boundary point $h^*(t)$ as $t\to+\infty$. By considering  discontinuous integrated solutions, we are able to estimate the size of the jump for non increasing profiles, which leads to an estimate of the asymptotic speed.
\begin{prop}[Asymptotic jump near the separatrix]\label{prop:jump}
	Let $u_0$ be a non increasing function satisfying $u_0(-\infty)\leq 1$, $u_0(0)>0$ and $u_0(x)=0$ for $x> 0$. Then  
	\begin{align}
		\liminf_{t\to+\infty}u(t, h^*(t))&\geq \frac{2}{2+\hat\chi},\label{eq:jumpsize} \\
		\liminf_{t\to+\infty}\frac{\dd}{\dd t}h^*(t)&\geq \frac{\sigma\hat\chi}{2+\hat\chi}, \label{eq:charspeed-min}
	\end{align}
	where $\hat\chi=\frac{\chi}{\sigma^2}$.
\end{prop}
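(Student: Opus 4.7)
The plan is to reduce everything to a logistic-type scalar differential inequality for the quantity $v(t):=u(t,h^*(t))$, which is precisely the size of the jump at the propagation front. Because $u_0$ is non increasing, Proposition \ref{prop:regularity} (whose statement I will use for the monotony-preserving property, suitably extended to the discontinuous integrated setting by approximation if needed) yields that $x\mapsto u(t,x)$ is non increasing for every $t\geq 0$. Combined with Proposition \ref{prop:separatrix}, this gives the two-sided control
\[
	u(t,x)\geq v(t) \text{ for } x\leq h^*(t),\qquad u(t,x)=0 \text{ for }x>h^*(t).
\]

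The first key step is to bound the non-local term from below along the trajectory $h^*(t)=h(t,0)$. Using the explicit kernel $\rho(x)=\frac{1}{2\sigma}e^{-|x|/\sigma}$, the support property and the monotonicity above yield
\[
	(\rho\star u)(t,h^*(t))=\int_{-\infty}^{h^*(t)}\rho(h^*(t)-y)u(t,y)\,\dd y\geq v(t)\int_{0}^{+\infty}\rho(z)\,\dd z=\frac{v(t)}{2}.
\]
Plugging this into the integrated equation \eqref{eq:integrated} evaluated along $h^*$ gives the differential inequality
\[
	\frac{\dd}{\dd t}v(t)\geq v(t)\Bigl(1-\bigl(1+\tfrac{\hat\chi}{2}\bigr)v(t)\Bigr).
\]
Since $v(0)=u_0(0)>0$, a standard scalar comparison with the logistic ODE of equilibrium $\frac{2}{2+\hat\chi}$ gives $\liminf_{t\to+\infty}v(t)\geq \frac{2}{2+\hat\chi}$, which is \eqref{eq:jumpsize}.

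For the speed estimate \eqref{eq:charspeed-min}, I would compute $\rho_x\star u$ at $h^*(t)$. Since $\rho_x(z)=-\frac{1}{2\sigma^2}\operatorname{sign}(z)e^{-|z|/\sigma}$ and $u(t,y)=0$ for $y>h^*(t)$, only the part $y<h^*(t)$ contributes and $\operatorname{sign}(h^*(t)-y)=+1$ there. Using again $u(t,y)\geq v(t)$ on that half-line,
\[
	-(\rho_x\star u)(t,h^*(t))=\frac{1}{2\sigma^2}\int_{-\infty}^{h^*(t)}e^{-(h^*(t)-y)/\sigma}u(t,y)\,\dd y\geq \frac{v(t)}{2\sigma^2}\int_0^{+\infty}e^{-z/\sigma}\,\dd z=\frac{v(t)}{2\sigma}.
\]
Substituting into the characteristic equation \eqref{eq:characteristics} gives $\frac{\dd}{\dd t}h^*(t)\geq \frac{\chi}{2\sigma}v(t)$, and taking the liminf together with the first bound produces $\liminf_{t\to+\infty}\frac{\dd}{\dd t}h^*(t)\geq\frac{\chi}{\sigma(2+\hat\chi)}=\frac{\sigma\hat\chi}{2+\hat\chi}$, using $\hat\chi=\chi/\sigma^2$.

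The main technical obstacle is ensuring that the monotonicity of $u(t,\cdot)$ really holds up to $h^*(t)$ for a discontinuous initial datum (so that the pointwise bound $u(t,\cdot)\geq v(t)$ on $(-\infty,h^*(t))$ can be used inside the convolution). I would handle this either by regularising $u_0$ by a non increasing continuous sequence $u_0^\varepsilon$ with $u_0^\varepsilon(0)\to u_0(0)$, invoking Proposition \ref{prop:regularity}, and passing to the limit using the continuity statement in Theorem \ref{thm:well-posed}, or directly by noting that the flow $x\mapsto h(t,x)$ is an increasing homeomorphism of $\mathbb R$ so that order is preserved along characteristics. The rest is a routine comparison argument for a scalar logistic ODE.
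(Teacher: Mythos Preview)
Your proof is correct and follows essentially the same approach as the paper: use the preserved monotonicity of $u(t,\cdot)$ to bound $(\rho\star u)(t,h^*(t))\geq \tfrac{1}{2}u(t,h^*(t))$, derive the logistic differential inequality for $v(t)=u(t,h^*(t))$, and then feed the resulting lower bound into the characteristic equation for $h^*$. The paper simply invokes the monotony-preservation from Proposition~\ref{prop:regularity} directly (which is stated without a continuity hypothesis on $u_0$), so your approximation discussion is extra caution rather than a genuine additional step.
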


We finally turn to traveling wave solutions $ u(t,x)=U(x-ct) $, which are self-similar profiles traveling at a constant speed.
\begin{defn}[Traveling wave solution]\label{def:TW}
	A {\it traveling wave} is a positive solution  $u(t,x)$ to \eqref{eq:main} such that there exists a function $U\in L^\infty(\mathbb R)$ and a speed $c\in\mathbb R$ such that $u(t, x)=U(x-ct)$ for a.e. $(t, x)\in\mathbb R^2$. By convention, we also require that $U$ has the following behavior at $\pm\infty$:
	\[ \lim_{z\to - \infty}U(z) =1,\quad \lim_{z\to \infty} U(z)=0. \]
	The function $U$ is the {\it profile} of the traveling wave.
\end{defn}

Under a technical assumption on $\hat\chi=\frac{\chi}{\sigma^2}$, we can prove the existence of sharp traveling waves which present a jump at the vanishing point. 
\begin{assumption}[Bounds on $\hat\chi$]\label{as:hatchi}
	Let $\chi>0$ and $\sigma>0$ be given and define  $\hat\chi :=\frac{\chi}{\sigma^2}$. We assume that $0<\hat\chi<\bar\chi$, where $\bar\chi$ is the unique root of the function 
	\begin{equation*}
		\hat\chi\mapsto \ln\left(\frac{2-\hat\chi}{\hat\chi}\right)+\frac{2}{2+\hat\chi}\left(\frac{\hat\chi}{2}\ln\left(\frac{\hat\chi}{2}\right)+1-\frac{\hat\chi}{2}\right)
	\end{equation*}
	given by Lemma \ref{lem:B1}.
%
\end{assumption}
\begin{rem}
	It follows from Lemma \ref{lem:B1} that $\hat\chi=1$ satisfies Assumption 3. Actually, numerical evidence suggest that $\bar\chi\approx 1.045$.
\end{rem}
\begin{thm}[Existence of a sharp discontinuous traveling wave]\label{thm:sharp-TW}
	Let Assumption \ref{as:hatchi} be satisfied. There exists a traveling wave $u(t,x)=U(x-ct)$ traveling at speed 
	\begin{equation*}
		c\in\left(\frac{\sigma\hat\chi}{2+\hat\chi},\frac{\sigma\hat\chi}{2}\right),  
	\end{equation*}
	where $\hat\chi=\frac{\chi}{\sigma^2}$

	Moreover, the profile $U$ satisfies the following properties (up to a shift in space): 
	\begin{enumerate}[label={\rm (\roman*)}]
		\item $U$ is {\em sharp} in the sense that $U(x)=0$ for all $x\geq 0$; moreover, $U$ has a discontinuity at $x=0$ with $ U(0^-)\geq \frac{2}{2+\hat\chi} $.
		\item $U$ is continuously differentiable  and strictly decreasing on $(-\infty, 0]$, and satisfies
			\[ -c\,U'-\chi(UP')' =U(1-U)  \] 
			pointwise on $(-\infty, 0)$, where $P(z):=(\rho\star U)(z)$.
	\end{enumerate}
\end{thm}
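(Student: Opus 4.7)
The plan is to recast the problem of finding a sharp traveling wave as an implicitly coupled ODE--convolution system on $(-\infty,0)$ and to solve it by a shooting and fixed-point argument. Substituting the ansatz $u(t,x)=U(x-ct)$ and setting $P(z):=(\rho\star U)(z)$ into the integrated formulation \eqref{eq:characteristics}--\eqref{eq:integrated}, and using that a sharp profile satisfies $U\equiv 0$ on $[0,\infty)$, one obtains for $z\in(-\infty,0)$ the ODE
\begin{equation*}
\bigl(-c-\chi P'(z)\bigr)U'(z)=U(z)\bigl(1+\hat\chi P(z)-(1+\hat\chi)U(z)\bigr),
\end{equation*}
coupled with the elliptic equation $-\sigma^2 P''+P=U$. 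The separatrix $h^*(t)=ct$ satisfies $\tfrac{\dd}{\dd t}h^*(t)=-\chi P'(0)$ by \eqref{eq:characteristics}, which forces
\begin{equation*}
c=-\chi P'(0),
\end{equation*}
and requiring that the characteristic value $U(0^-)$ be stationary under the logistic dynamics \eqref{eq:integrated} along the separatrix produces the complementary ``shock'' relation
\begin{equation*}
U(0^-)=\frac{1+\hat\chi P(0)}{1+\hat\chi}.
\end{equation*}
The upper bound $c<\sigma\hat\chi/2$ follows directly from the explicit formula $-P'(0)=\int_{-\infty}^{0}\frac{1}{2\sigma^2}e^{y/\sigma}U(y)\dd y$ and the strict bound $U<1$.

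For the construction proper, I would parametrise candidate profiles by the front value $\alpha:=U(0^-)\in\bigl(\tfrac{2}{2+\hat\chi},1\bigr)$; under the shock relation this determines $P(0)$, and together with $c=-\chi P'(0)$ it closes the boundary data. On a bounded interval $[-L,0]$, a Banach fixed-point iteration alternating between computing $P=\rho\star U$ from a candidate $U$ (extended by zero outside $[-L,0]$) and solving the ODE backward from $z=0^-$ for a new $U$ yields a solution $(U_L,P_L)$; the strong regularising effect of the convolution with $\rho$ provides the $C^1_{\mathrm{loc}}$-compactness needed to pass to the limit $L\to\infty$ and obtain a profile $(U_\alpha,P_\alpha)$ on the whole half-line. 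The correct value of $\alpha$, hence of $c$, is then selected by the asymptotic condition $U(-\infty)=1$: a continuity argument in $\alpha$ shows that the set of admissible parameters is non-empty, and the lower bound $c>\sigma\hat\chi/(2+\hat\chi)$ follows from $U(0^-)\geq\tfrac{2}{2+\hat\chi}$ (the stationary counterpart of Proposition \ref{prop:jump}) combined with the shock relation, the strict inequality coming from the fact that $U$ cannot be identically equal to its front value.

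The main obstacle is the apparent singularity of the ODE at $z=0$: both the numerator and the denominator vanish there (the former by the shock condition, the latter by $c=-\chi P'(0)$). A first-order Taylor expansion identifies $U'(0^-)$ as the unique solution of an implicit linear equation involving $P''(0^-)=\frac{P(0)-U(0^-)}{\sigma^2}<0$ (here $U(0^-)>P(0)$ since $P$ is a local average of $U$ and $U$ has a downward jump at $0$), so that the ODE is in fact regular at the front and $U\in C^1((-\infty,0])$. A secondary, pervasive difficulty is the non-local coupling between $U$ and $P$, which blocks the use of standard phase-plane techniques; the precise inequality encoded in Assumption \ref{as:hatchi} via Lemma \ref{lem:B1} is exactly what guarantees that the set of admissible $\alpha$ is non-empty and that the corresponding trajectory remains strictly decreasing and confined to $(0,1)$ all the way down to $z=-\infty$.
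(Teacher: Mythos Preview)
Your derivation of the profile ODE, the speed relation $c=-\chi P'(0)$, the shock condition $U(0^-)=\frac{1+\hat\chi P(0)}{1+\hat\chi}$, and the removability of the singularity at $z=0$ are all correct and coincide with the paper's preliminary analysis. The construction of the fixed point, however, differs from the paper's and contains real gaps.

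The paper does not shoot in a free parameter $\alpha$; in fact no such free parameter exists. The front value $U(0^-)$ is not something you can prescribe independently: it is determined by the shock relation, and $P(0)$ depends on the \emph{entire} profile $U$ through the convolution. Instead, the paper defines an explicit map $\mathcal T$ on the closed convex set $\mathcal A$ of nonincreasing profiles with $U(z)\in\bigl[\tfrac{2}{2+\hat\chi},1\bigr]$ on $(-\infty,0)$ and $U\equiv 0$ on $[0,\infty)$: given $U\in\mathcal A$, one sets $P=\rho\star U$, introduces the time change $\tau'(t)=\chi(P'(0)-P'(\tau(t)))$, and solves the resulting nonsingular ODE exactly. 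This map is shown to be continuous and compact in a weighted sup norm, and Schauder's theorem provides a fixed point. No contraction is claimed or needed; your proposed Banach iteration is not justified, and it is unclear why the map would be a contraction. Moreover, once a fixed point in $\mathcal A$ is found, $U(-\infty)=1$ follows \emph{automatically}: monotonicity and the a priori bound $U\geq \tfrac{2}{2+\hat\chi}$ force the limit to exist, and passing to the limit in the ODE gives $U(-\infty)(1-U(-\infty))=0$, hence $U(-\infty)=1$. No continuity/shooting argument in $\alpha$ is required, and your proposal does not explain how such an argument would even be set up given that $\alpha$ is not a genuine degree of freedom.

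You also mischaracterize the role of Assumption~\ref{as:hatchi}. It is not used to ensure that ``the set of admissible $\alpha$ is non-empty''. Its precise function (Lemma~\ref{LEM:1}) is to guarantee that $P'(0)<P'(z)$ for every $z<0$, uniformly on compact subsets and uniformly over $U\in\mathcal A$. This is what keeps the ODE coefficient $\chi(P'(0)-P'(z))$ strictly negative on $(-\infty,0)$, so that the time change $\tau$ is a global diffeomorphism onto $(-\infty,0)$ and the map $\mathcal T$ is well defined; it is also what yields the uniform derivative bounds that make $\mathcal T$ compact. Your description of the mechanism (``trajectory remains strictly decreasing and confined to $(0,1)$'') is a consequence, not the content, of the assumption.
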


Finally, we show that continuous traveling waves cannot be sharp, {\it i.e.} are necessarily positive on $\mathbb R$. 
\begin{prop}[Smooth traveling waves]\label{prop:smooth-TW}
	Let  $U(x)$ be  the profile of a traveling wave solution to \eqref{eq:main} and assume that $U$ is continuous. Then  $U\in C^1(\mathbb R)$, $U$ is strictly positive and we have the estimate:
	\begin{equation}\label{eq:smooth-speed}
		 -\chi(\rho_x\star U)(x)<c \text{ for all }x\in \mathbb R.
	\end{equation}
\end{prop}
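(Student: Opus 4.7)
The plan is to derive an integrated form of the profile equation, use it to show $U$ cannot vanish, and then read off the regularity and the inequality estimate. First I would observe that $0\le U\le 1$ on $\mathbb R$ follows from a maximum-principle argument along characteristics: if $U$ achieved a value strictly greater than $1$ at some $x_M$, the ODE $\dot v = v(1+\hat\chi P(\xi)-(1+\hat\chi)v)$ along the characteristic issued from $x_M$ would be strictly decreasing at $t=0$, so that running it backwards in time would produce $v(-\varepsilon)>U(x_M)=\sup U$, which is impossible. Next, the elliptic equation $-\sigma^2 P''+P=U$ with continuous $U$ gives $P\in C^2(\mathbb R)$ with finite limits for $P$ and $P'$ at $\pm\infty$, so that $\psi(x):=-c-\chi P'(x)$ belongs to $C^1(\mathbb R)$ with $\psi'=\hat\chi(U-P)$. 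Interpreting the profile equation $-cU'-\chi(UP')'=U(1-U)$ distributionally (a consequence of Definition~\ref{def:intsol} applied to $u(t,x)=U(x-ct)$), the continuous function $G(x):=cU(x)+\chi U(x)P'(x)$ has weak derivative $-U(1-U)\in L^\infty(\mathbb R)$ and is therefore Lipschitz and non-increasing; since $G(+\infty)=0$, integration yields the fundamental identity
\[
  -U(x)\,\psi(x)=\int_x^{+\infty}U(y)\bigl(1-U(y)\bigr)\,dy,
\]
and, looking at $x\to-\infty$, also $c=\int_{\mathbb R}U(1-U)\,dy>0$.

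The core of the argument is to show $U>0$ everywhere by contradiction. Suppose $U(x_0)=0$ for some $x_0\in\mathbb R$. The identity forces $U(1-U)\equiv 0$ on $[x_0,+\infty)$, so by continuity $U$ takes values in $\{0,1\}$ on that set; since $U(x_0)=U(+\infty)=0$, the intermediate value theorem rules out any point where $U=1$, and hence $U\equiv 0$ on $[x_0,+\infty)$. Setting $x^*:=\inf\{x\in\mathbb R:U(x)=0\}$, we have $U>0$ on $(-\infty,x^*)$ and $U\equiv 0$ on $[x^*,+\infty)$. On $(-\infty,x^*)$ the identity gives $\psi<0$ strictly, hence $\psi(x^*)\le 0$ by continuity. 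But $\psi'(x^*)=-\hat\chi P(x^*)<0$, so the case $\psi(x^*)=0$ would imply $\psi>0$ just to the left of $x^*$ by Taylor expansion, contradicting what was just shown; therefore $\psi(x^*)<0$. Choosing $x_0$ slightly larger than $x^*$, with $\psi$ bounded away from $0$ on $[x^*,x_0]$ and for which the integrated equation applies, the characteristic $\xi(t)$ issued from $x_0$ reaches $x^*$ in finite time and then enters $(-\infty,x^*)$. Along this characteristic, $\dot v=v(1+\hat\chi P(\xi)-(1+\hat\chi)v)$ with $v(0)=U(x_0)=0$ has the unique solution $v\equiv 0$, so $U(\xi(t))=0$ on a subset of $(-\infty,x^*)$ where $U>0$, a contradiction.

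Once $U>0$ is established, the strict inequality $\psi(x)<0$ for every $x\in\mathbb R$ follows from the fundamental identity, since its right-hand side is strictly positive: $U(y)(1-U(y))>0$ on a set of positive measure in $(x,+\infty)$ because $U$ is continuous, positive and tends to $0$ at $+\infty$. This is precisely the estimate $-\chi(\rho_x\star U)(x)<c$. Finally, the formula $U=R/(-\psi)$ with $R(x):=\int_x^{+\infty}U(1-U)\,dy\in C^1$, combined with $\psi\in C^1$ and $\psi<0$, gives $U\in C^1(\mathbb R)$. The main obstacle is the exclusion of the degenerate case $\psi(x^*)=0$: it depends crucially on the $C^1$-regularity of $\psi$ provided by the elliptic smoothing through $\rho$, and on the explicit sign $\psi'(x^*)=-\hat\chi P(x^*)<0$. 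Without this smoothing effect, one could not rule out a priori a pathological profile vanishing on a half-line with $\psi$ merely tangent to $0$ at the boundary.
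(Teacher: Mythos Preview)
Your argument is correct and takes a genuinely different route from the paper's. The paper proceeds in the order \emph{inequality $\Rightarrow$ regularity $\Rightarrow$ positivity}: assuming $Y:=-\chi P'-c$ vanishes, it locates the largest zero $x_*$, observes that the characteristic through $x_*$ is stationary in the moving frame, and uses the ODE along characteristics to force $U(x_*)\in\{0,\,(1+\hat\chi P(x_*))/(1+\hat\chi)\}$; each alternative is then excluded via the sign of $Y'(x_*)$ and a growth argument along a nearby characteristic. With $Y<0$ in hand, the map $t\mapsto h(t,y)-ct$ is a $C^1$ diffeomorphism, giving $U\in C^1$, and positivity follows from Cauchy--Lipschitz uniqueness for the classical ODE $U'=U(1+\hat\chi P-(1+\hat\chi)U)/Y$. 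You instead go \emph{positivity $\Rightarrow$ inequality $\Rightarrow$ regularity}, and your central device is the integrated identity $-U(x)\psi(x)=\int_x^{+\infty}U(1-U)\,dy$. This is more economical: it immediately pins the zero set of $U$ to a right half-line, delivers the sign of $\psi$ on $\{U>0\}$ for free, and yields the explicit formula $U=R/(-\psi)$ from which $C^1$ regularity is read off directly. The paper's approach, on the other hand, never leaves the characteristic formulation of Definition~\ref{def:intsol}; your derivation of the integrated identity rests on the claim that an integrated solution satisfies the conservative equation $-cU'-\chi(UP')'=U(1-U)$ in the sense of distributions, which you assert as ``a consequence of Definition~\ref{def:intsol}'' but do not prove here. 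This is true (and is part of the companion well-posedness theory), but within the present paper it is an unjustified step. Both arguments ultimately rely on the same local computation $\psi'(x^*)=-\hat\chi P(x^*)<0$ at the edge of the support and on a characteristic that crosses $x^*$ while carrying the value $0$, so the essential obstruction is identified the same way.
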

In particular, by Theorem \ref{thm:discont}, any solutions starting from an initial condition satisfying Assumption \ref{assum:initcond-steep} may never catch such a traveling wave.

\section{Numerical Simulations}
We first describe the numerical framework of this study.
\begin{itemize}
	\item The parameters $\sigma $ and $\chi $ are fixed to 1, $\sigma=1$ and $\chi=1$.
	\item We are given a bounded interval $ [-L,L] $ and an initial distribution of $ \phi \in C([-L,L]) $;
	\item We solve numerically the following PDE using the upwind scheme ($p$ being given)
	\begin{equation}\label{5.1}
	\begin{cases}
	\partial _{t}u(t,x) -\partial_x\bigl(u(t,x)\partial_x p(t,x)\bigr)=u(t,x)(1-u(t,x)),\\
	\nabla p(t,x) \cdot \nu =0\\
	u(0,x)=\phi(x),
	\end{cases}\;\;t>0,\;x\in [-L,L].
	\end{equation}
	\item The pressure $ p $ is defined as 
	\begin{equation}\label{5.2}
	\begin{aligned}
	p(t,x)=(I-\Delta)_{\mathcal{N}}^{-1} u(t,x), \quad t>0,x\in [-L,L],
	\end{aligned}
	\end{equation}
	where $ (I-\Delta)_{\mathcal{N}}^{-1} $ is the Laplacian operator with Neumann boundary condition. Due to the Neumann boundary condition of the pressure $ p $, we do not need boundary condition on $ u $ (see \cite{Nadin2008,Fu2019}).
\end{itemize}
Our numerical scheme reads as follows
\begin{equation*}
\begin{aligned}
&\frac{u^{n+1}_i-u^n_i}{\Delta t} +\, \frac{1}{\Delta x}\bigg(G(u_{i+1}^n,u_{i}^n)-G(u_{i}^n,u_{i-1}^n)\bigg)= u_i^n(1-u_i^n),\\
&i=1,2,\ldots,M,\ n=0,1,2,\ldots\\
&u_0=1,\;u_{M+1}=0,
\end{aligned}
\end{equation*}
with $ G(u_{i+1}^n,u_{i}^n) $ defined as
\begin{equation*}
G(u_{i+1}^n,u_{i}^n)=(v_{i+\frac{1}{2}}^n)^+u_{i}^n -(v_{i+\frac{1}{2}}^n)^-u_{i+1}^n=\begin{cases}
v_{i+\frac{1}{2}}^n u_{i}^n, & v_{i+\frac{1}{2}}^n\geq 0,\\
v_{i+\frac{1}{2}}^n u_{i+1}^n, &v_{i+\frac{1}{2}}^n<0,
\end{cases}\quad i=1,\ldots,M.
\end{equation*}
Moreover the velocity $ v $ is given by
\begin{equation*}
v_{i+\frac{1}{2}}^n=-\dfrac{p_{i+1}^n-p_i^n}{\Delta x},\ i=0,1,2,\cdots,M,
\end{equation*}
where from \eqref{5.2} we define 
\begin{equation*}
P^n:=(I- A)^{-1}U^n,\quad P^n = \big(p_i^n\big)_{M\times 1}\quad
U^n = \big(u_i^n\big)_{M\times 1}.
\end{equation*}	
where $ A=(a_{i,j})_{M\times M} $ is the usual linear diffusion matrix with Neumann boundary condition. Therefore, by Neumann boundary condition $ p_0=p_1 $ and $ p_{M+1}=p_M $, when $ i=1,M $ we have 
\begin{equation*}
\begin{aligned}
G(u_{1}^n,u_{0}^n)=0,\\
G(u_{M+1}^n,u_{M}^n)=0,
\end{aligned}
\end{equation*}
which gives
\begin{align*}
u^{n+1}_1&=u^n_1 -d\, \frac{\Delta t}{\Delta x}G(u_{2}^n,u_{1}^n)+\Delta t \, f(u_1^n),\\
u^{n+1}_M&=u^n_M+d\, \frac{\Delta t}{\Delta x}G(u_{M}^n,u_{M-1}^n)+\Delta t \, f(u_M^n).
\end{align*}
Owing to the boundary condition, we have the conservation of mass for Equation \eqref{5.1} when the reaction term equals zero.

\subsection{Formation of a discontinuity}

In this part, we use numerical simulations to verify the theoretical predictions in the previous sections.
Firstly, we choose the initial value $ \phi \in C^1([-L,L]) $ as follows 
\begin{equation}\label{5.3}
\phi(x)= \frac{(x-x_0)^2}{(L+x_0)^2} \mathbbm{1}_{ [-L, x_0] }(x),\quad L=20,\;x_0=-15.
\end{equation}
Notice that this initial condition satisfies Assumptions \ref{assum:initcond-basic} and \ref{assum:initcond-steep}. Due to Theorem \ref{thm:discont}, we should observe the formation of a discontinuity in space for large time.

We plot the evolution of the solution $ u(t,x) $ starting from $ u(0,x) =\phi(x) $ in Figure \ref{FIG1}.
\begin{figure}[H]
	\begin{center}
		\includegraphics[width=0.45\textwidth]{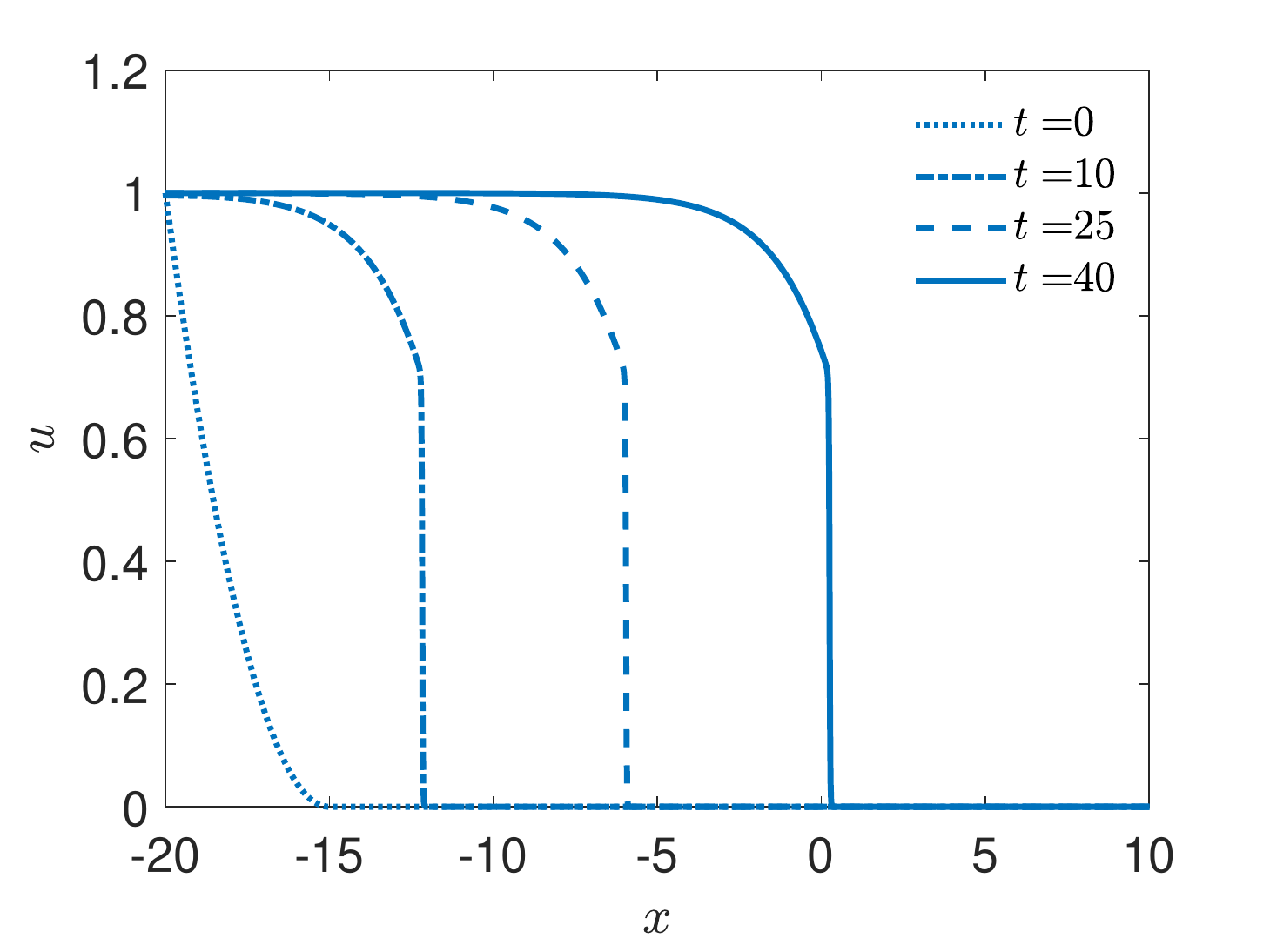}
		\vspace{-0.5cm}
	\end{center}
	\caption{\textit{We plot the propagation of the traveling waves under system \eqref{5.1} with the initial value \eqref{5.3}. We plot the propagation profile at $ t=0,10,25,40 $ (resp. dashed lines, dotted-dashed lines, dotted lines and solid lines). }}
	\label{FIG1}
\end{figure}
We observe that the jump is formed for large time and the height of the jump is greater than $ 2/3$ which is in accordance with Theorem \ref{thm:sharp-TW}.

Next, we study the propagation speed of different level sets, namely,
\begin{equation*}
t\longmapsto \xi(t, \beta)+L,
\end{equation*}
where $ \xi(t, \beta):=\sup\{x\in\mathbb R\,|\, u(t, x)= \beta\} $ and $ \beta = 0,0.2,2/3, 0.8  $. Note that the case  $ \beta =0 $ corresponds to the rightmost characteristic.

We compute the propagation speed in the following way:
for different $ \beta\in [0,1] $, we choose $ t_1=15 $ and $ t_2=40 $ where the propagation speed is almost stable after $ t=t_1 $. Thus we can compute the mean propagation speed as follows
\begin{equation}\label{eq:calculation-v}
\text{Propagation speed at level $ \beta $}= \frac{\xi(t_2,\beta)-\xi(t_1,\beta)}{t_2-t_1}.
\end{equation}

\begin{figure}[H]
	\begin{center}
		\includegraphics[width=0.45\textwidth]{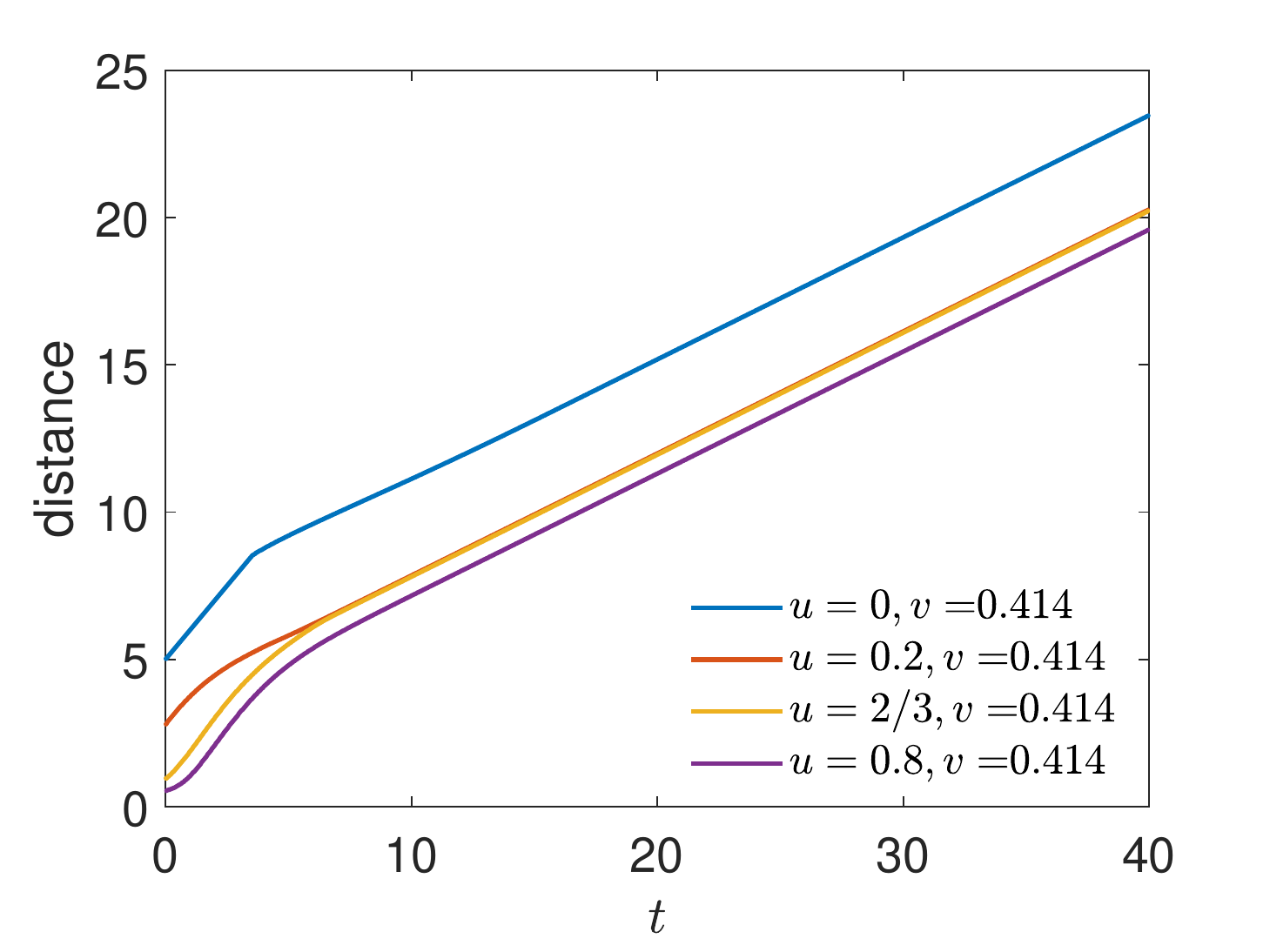}
		\vspace{-0.5cm}
	\end{center}
	\caption{\textit{We  plot the evolution of different level sets $ t\longmapsto \xi(t, \beta)+L $ under system \eqref{5.1}. Our initial distribution is taken as \eqref{5.3}. We plot the propagating speeds of the profile at $ \beta =0,0.2,2/3, 0.8 $. The x-axis represents the time and the y-axis is the relative distance $ \xi(t, \beta)+L $. The velocity is calculated by \eqref{eq:calculation-v} for $ t_1=15 $ and $ t_2=40 $.}}
	\label{FIG2}
\end{figure}

%


Next we want to check whether the solutions of system \eqref{5.2} starting from two different initial values  converge to the same discontinuous traveling wave solution. To that aim, given two different initial profiles $ \phi_1 $ and $ \phi_2 $ with $ \phi_1 \leq \phi_2 $ on $ [-L,L] $,
\begin{equation}\label{5.4}
\phi_1(x)= -\frac{x+15}{5} \mathbbm{1}_{ [-20, -15] }(x),\quad \phi_2(x)=\mathbbm{1}_{ [-20, -17.5] }(x) -\frac{x+15}{10} \mathbbm{1}_{ [-17.5, -15] }(x)
\end{equation}
We simulate the propagation of these two profiles in Figure \ref{FIG4}.
\begin{figure}[H]
	\begin{center}
		\includegraphics[width=0.45\textwidth]{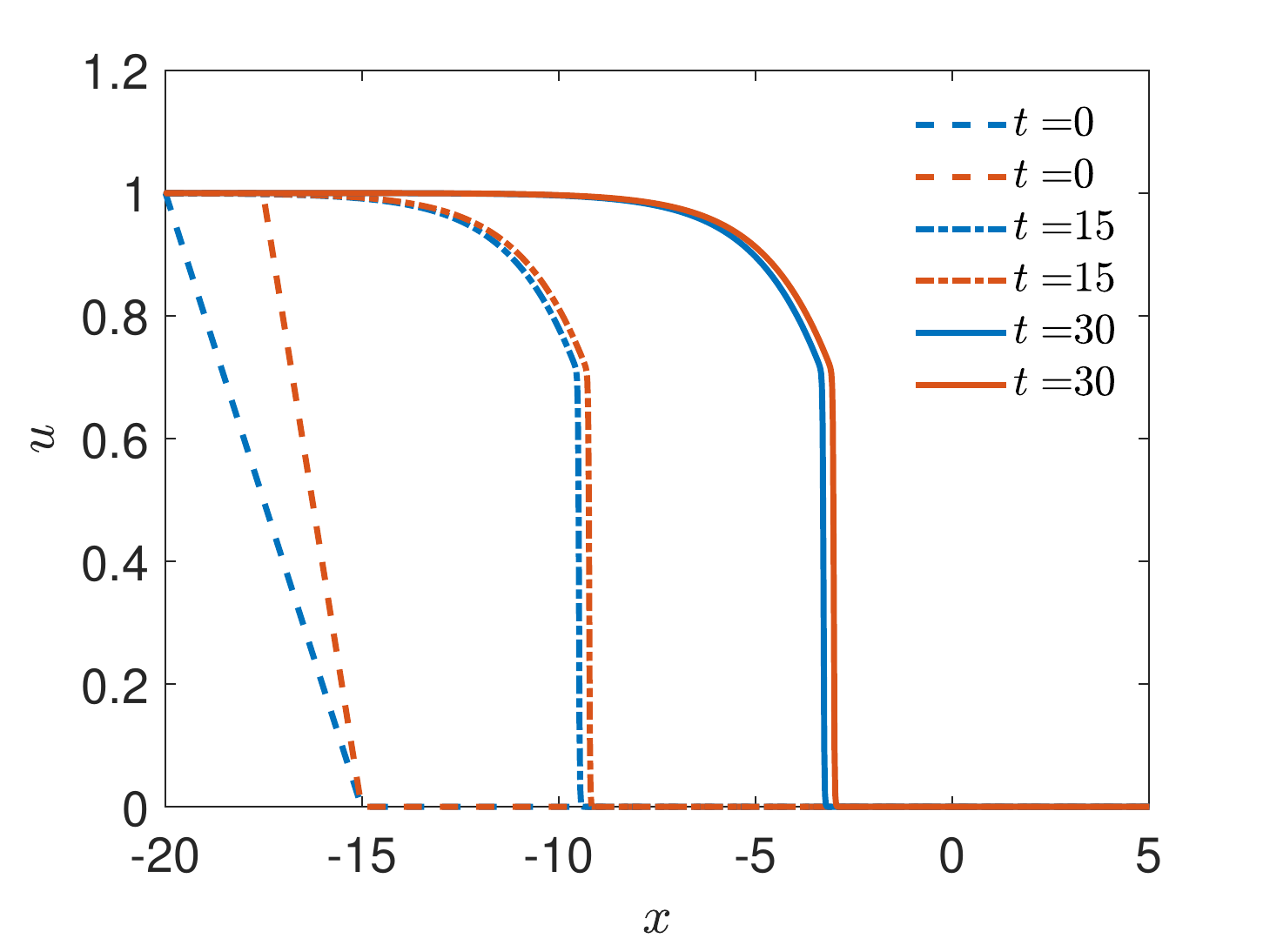}
		\vspace{-0.5cm}
	\end{center}
	\caption{\textit{We  plot the propagation of two profiles under system \eqref{5.1} with initial distributions are taken as \eqref{5.4}. The blue curves represent the profile with initial distribution $ \phi_1 $ while the red curves represent the profile with initial distribution $ \phi_2 $. We plot the propagation profiles at $ t=0,15 $ and $ 30 $ (resp. dashed lines, dotted-dashed lines and solid lines). The simulation shows that the two profiles converge to the same discontinuous traveling wave solution.  }}
	\label{FIG4}
\end{figure}
\subsection{Large speed traveling waves}
As we know for porous medium equation, the existence of large speed $ c>c_* $ traveling wave solutions is proved in \cite{dePablo1991} and it can be observed numerically by taking the exponentially decreasing function as initial value. In this part, instead of taking a compactly supported initial value, we set the initial value 
\begin{equation}\label{eq:5.5-initial-value}
\phi_{\alpha}(x) = \frac{1}{1+e^{\alpha(x-x_0)}},\quad x_0=-15,
\end{equation}
where $ \alpha\geq 1 $ is a parameter introduced to describe the decaying rate of the initial value. 

We compare the following three different scenarios with different parameters $ \alpha=1,2, 5 $ in the initial value \eqref{eq:5.5-initial-value}.
\begin{figure}[H]
	\begin{center}
		\includegraphics[width=0.3\textwidth]{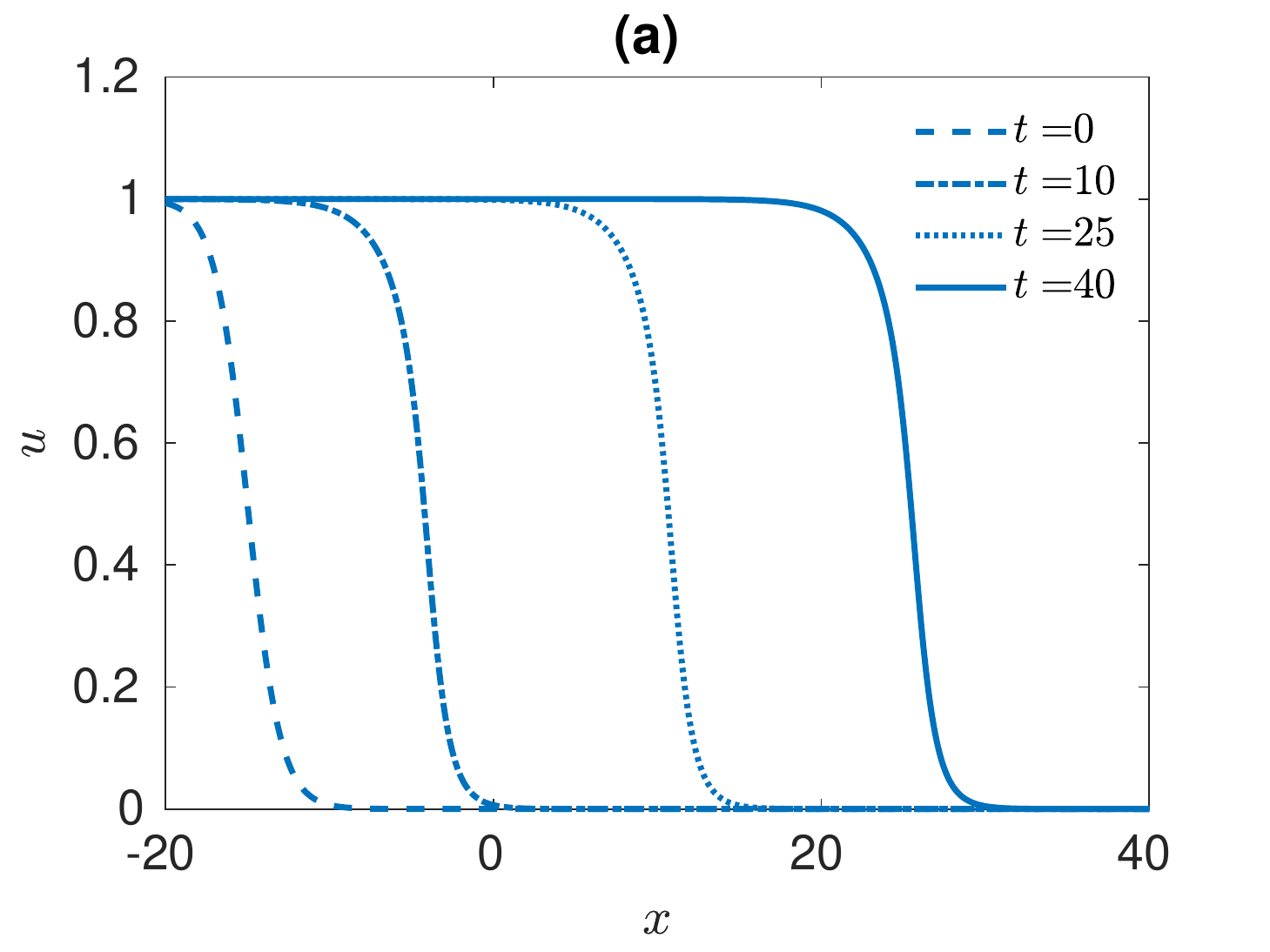}\includegraphics[width=0.3\textwidth]{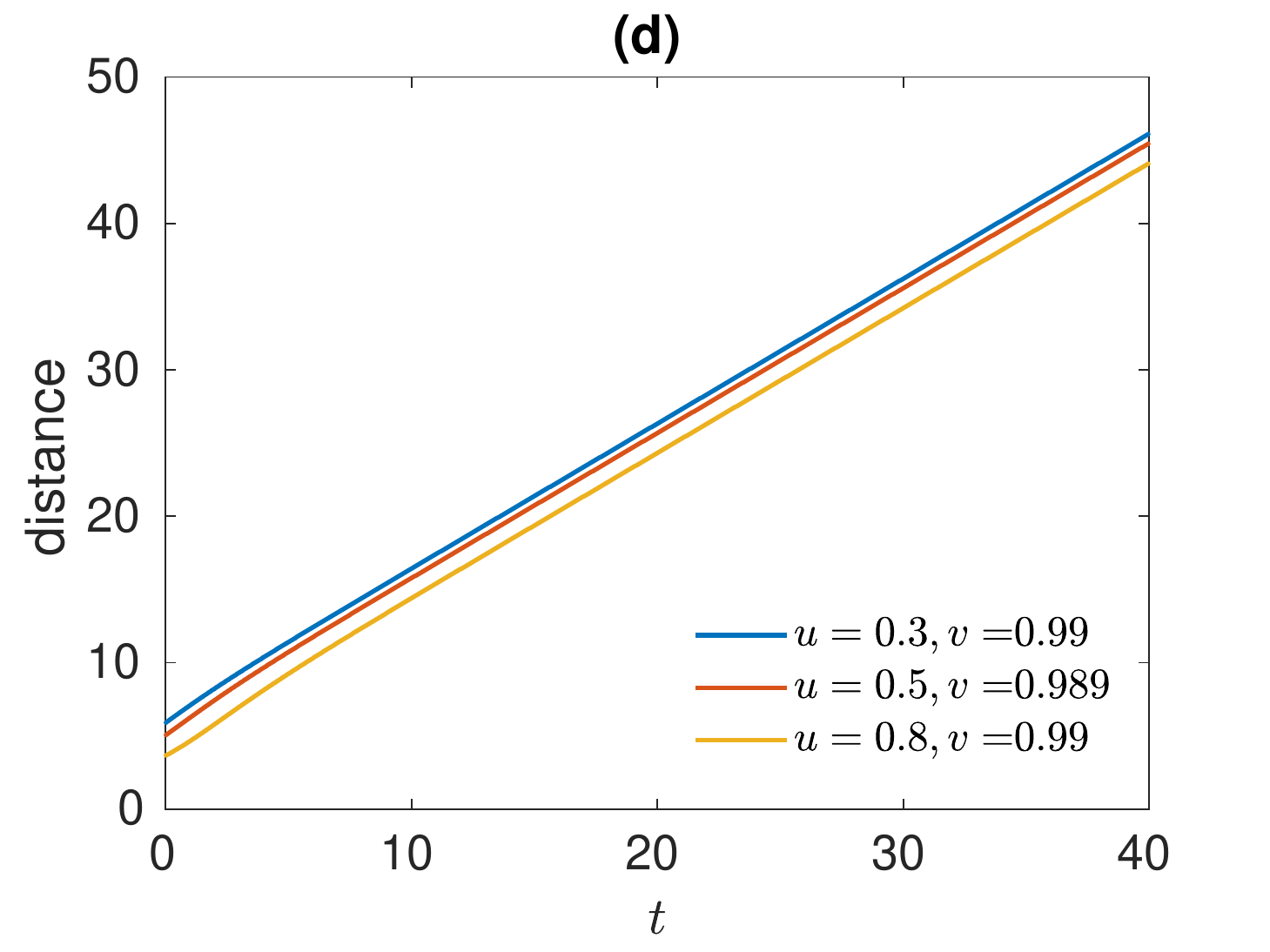}
		\includegraphics[width=0.3\textwidth]{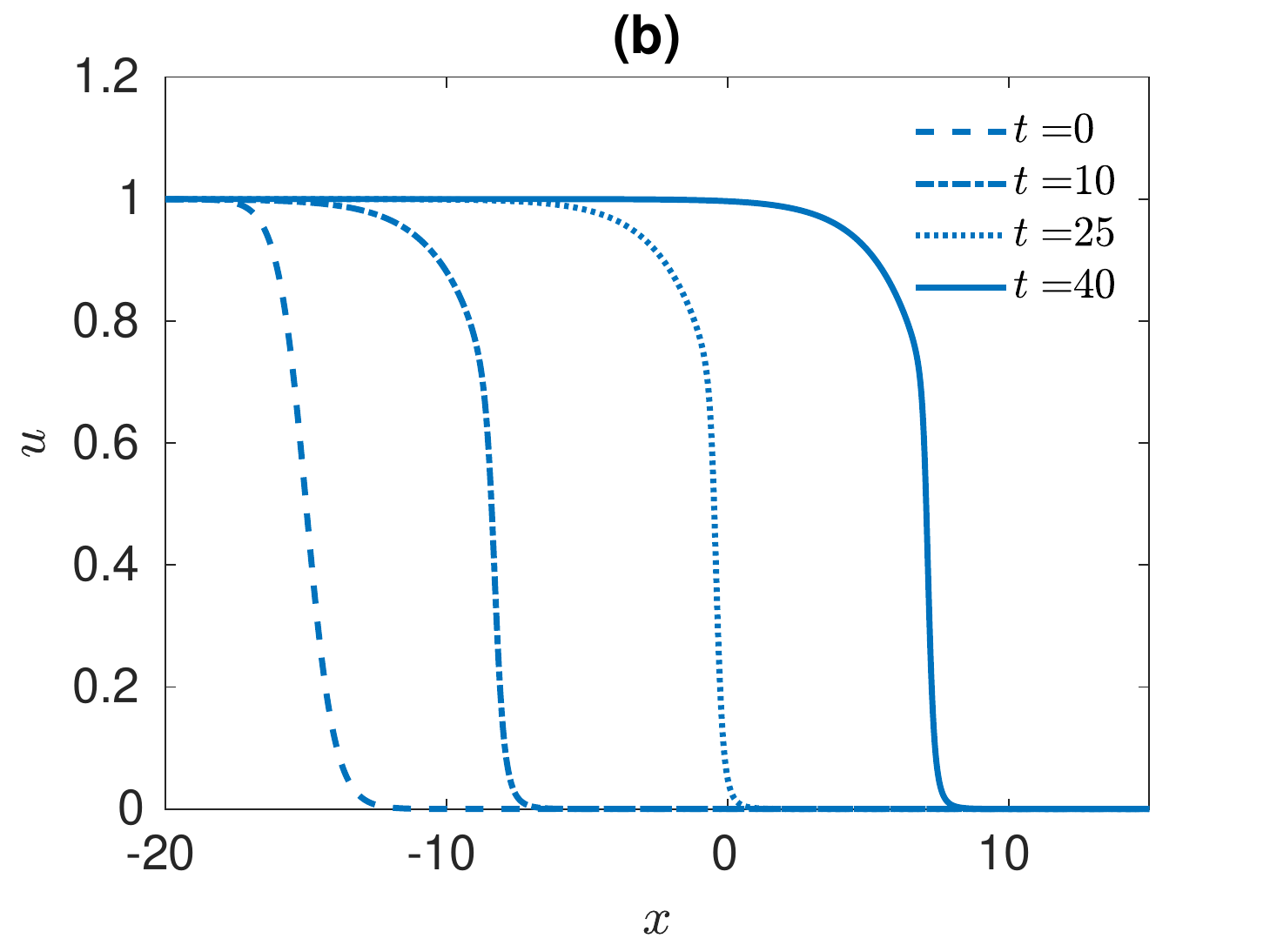}\includegraphics[width=0.3\textwidth]{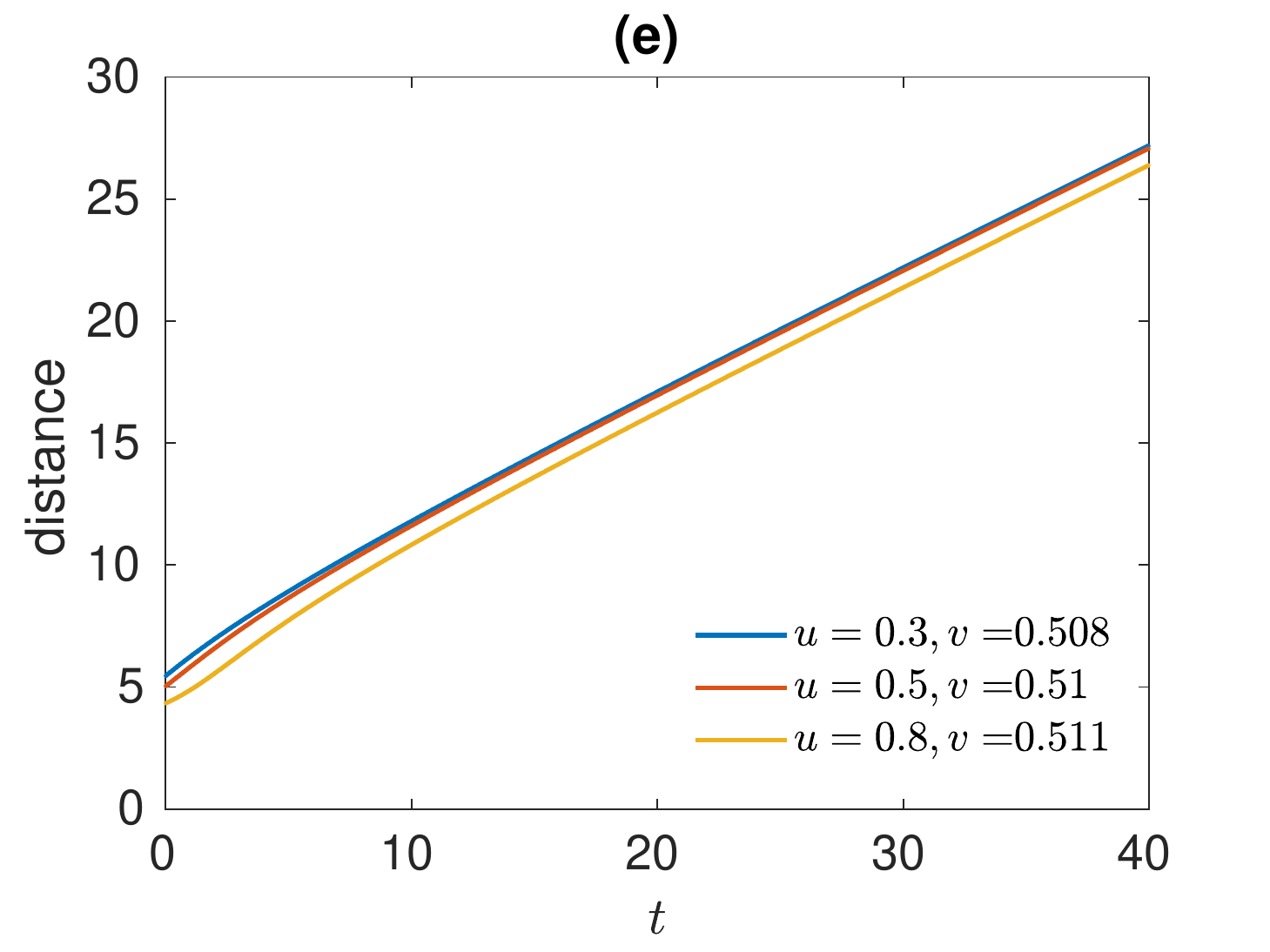}
		\includegraphics[width=0.3\textwidth]{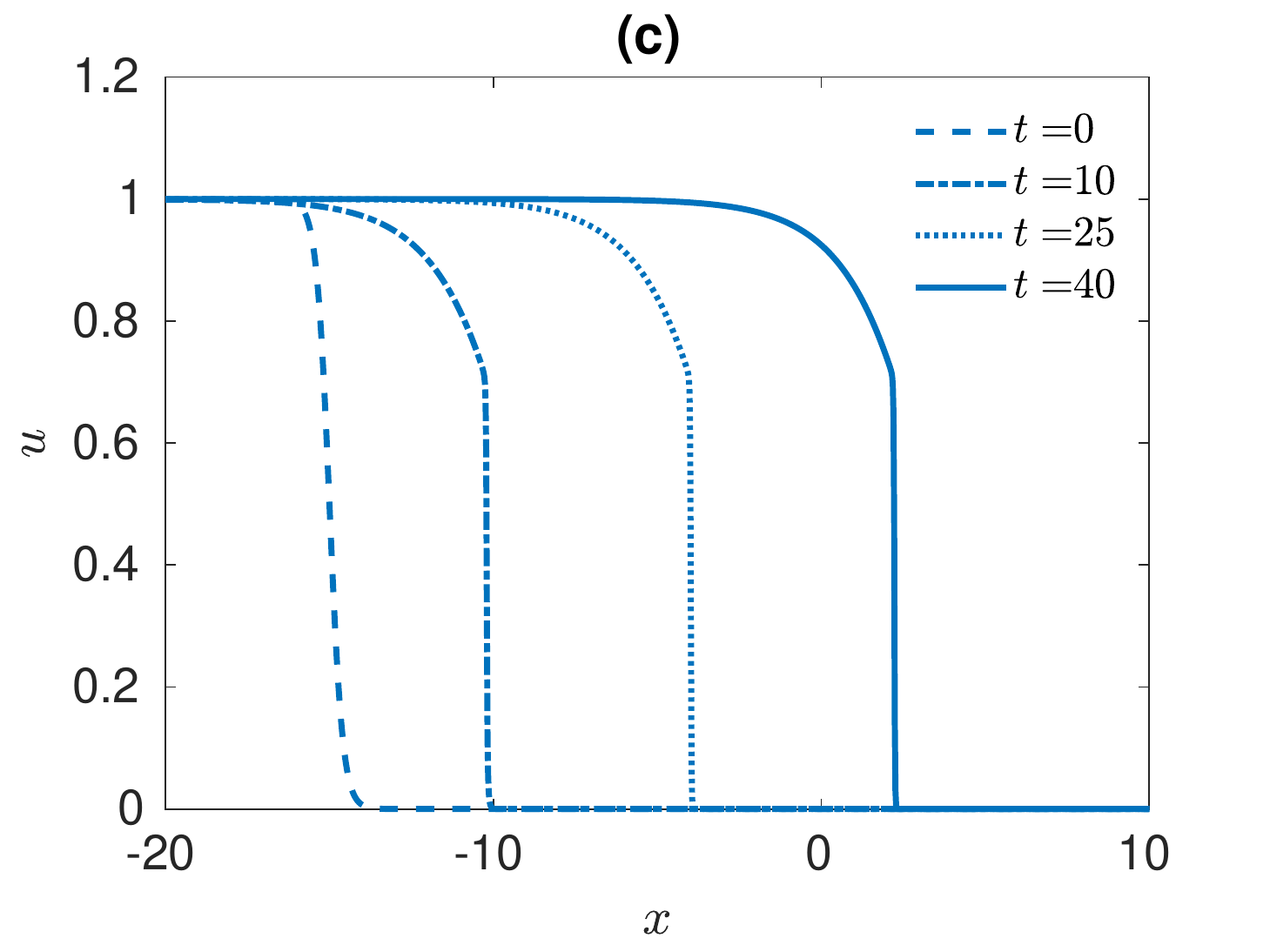}\includegraphics[width=0.3\textwidth]{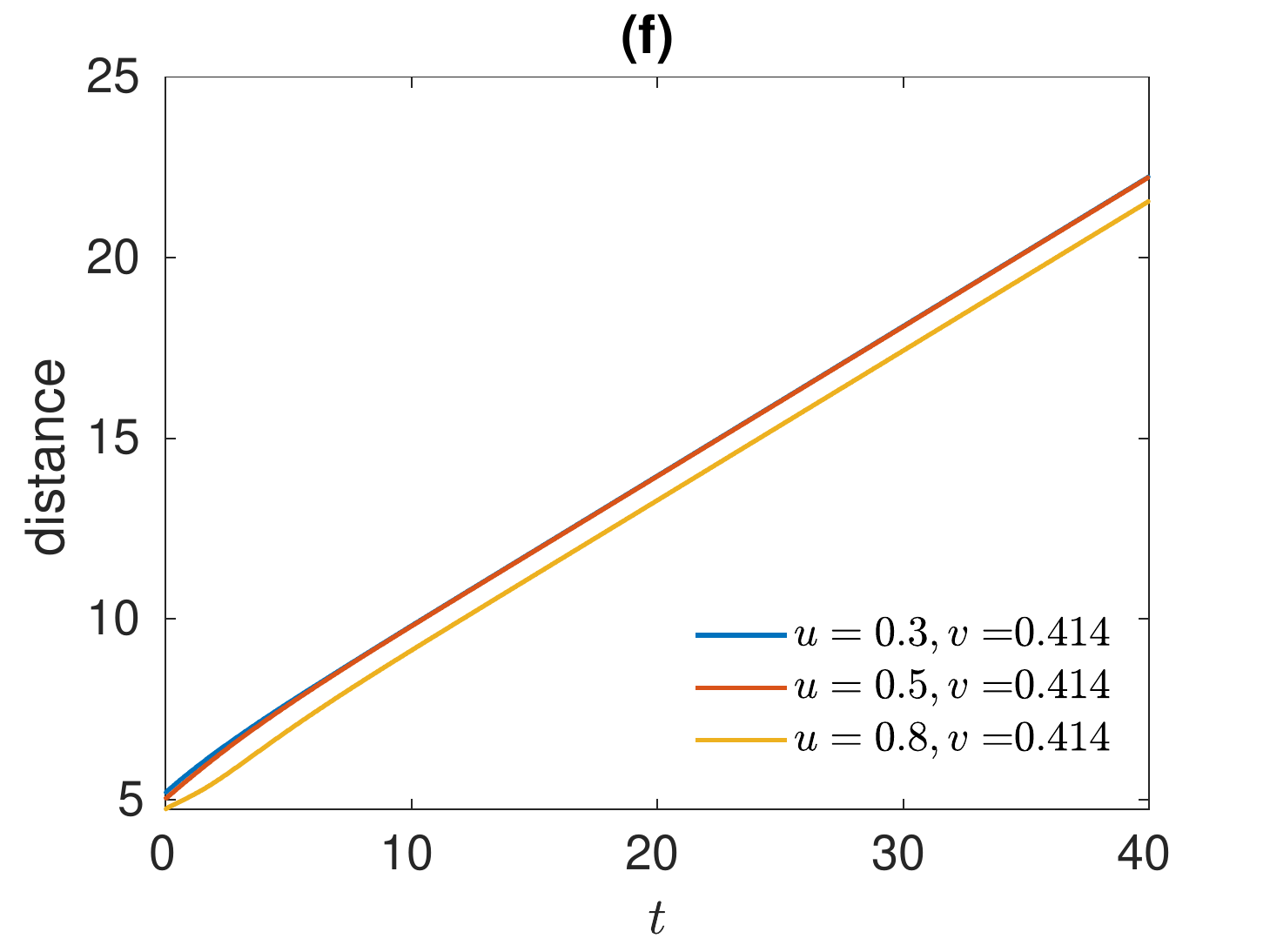}
	\end{center}
	\caption{\textit{We plot the propagation of the traveling waves under system \eqref{5.1} with the initial values \eqref{eq:5.5-initial-value} and  the corresponding evolution of different level sets $ t\longmapsto \xi(t, \beta)+L $. Figure (a) and (d) represent the evolution of the traveling wave and its level sets when $ \alpha=1 $. Figure (b) and (e) correspond to the case when $ \alpha=2 $. Figure (c) and (f) correspond to the case when $ \alpha=5 $. }}
	\label{FIG5}
\end{figure}
We observe the large speed traveling waves in Figure \ref{FIG5} when $ \alpha =1,2 $.
We note that as the parameter $ \alpha $ in \eqref{eq:5.5-initial-value} is increasing,  the propagation speed is decreasing and $ c \approx 1/\alpha $. When $ \alpha =5 $, the propagation of the traveling waves is similar to the case in Figure \ref{FIG1} in which we started from the compactly supported initial value. In other word, we can observe	 the formation of discontinuity and the critical speed $ c_*\approx 0.414  $ is reached. 

\subsection{Comparison with porous medium equations: the vanishing jump}
\label{Section3.3}
In this part, we compare the non-local advection model with the porous medium equation by introducing a new parameter $ \sigma $ 
\begin{equation}\label{eq:rho-epsilon}
p(t,x)=(I-\sigma^2 \Delta)_{\mathcal{N}}^{-1} u(t,x)
\end{equation}
Thus if $ \sigma \to 0 $ then formally we have $ p(t,x) \to u(t,x) $. Thus, the first equation of \eqref{5.1} becomes
\[ u_t - \frac{1}{2}(u^2)_{xx} =u(1-u) ,\]
which is the classical porous medium equation. It is well-known that this equation has the explicit traveling wave solution $ U(z)=(1-e^{z/\sqrt{2}})_+ $
with critical speed $ c_* =1/\sqrt{2} $.

We are consider the transition from the discontinuous traveling wave solution to the continuous sharp-type traveling wave solution by letting $ \sigma \to 0 $. Moreover, we want to see if the critical traveling speed of the discontinuous wavefront $ c(\sigma)$ converges to $  c_*=1/\sqrt{2}\approx 0.707 $ as $ \sigma \to 0 $. Our initial value is taken as $ 1/(1+{\rm exp}(5*(x+15))), x\in [-20,20] $ in \eqref{eq:5.5-initial-value}. We compare the following three different scenarios with different parameters $ \sigma^2=0.5,0.1,0.01 $ in kernel \eqref{eq:rho-epsilon}.
\begin{figure}[H]
	\begin{center}
		\includegraphics[width=0.3\textwidth]{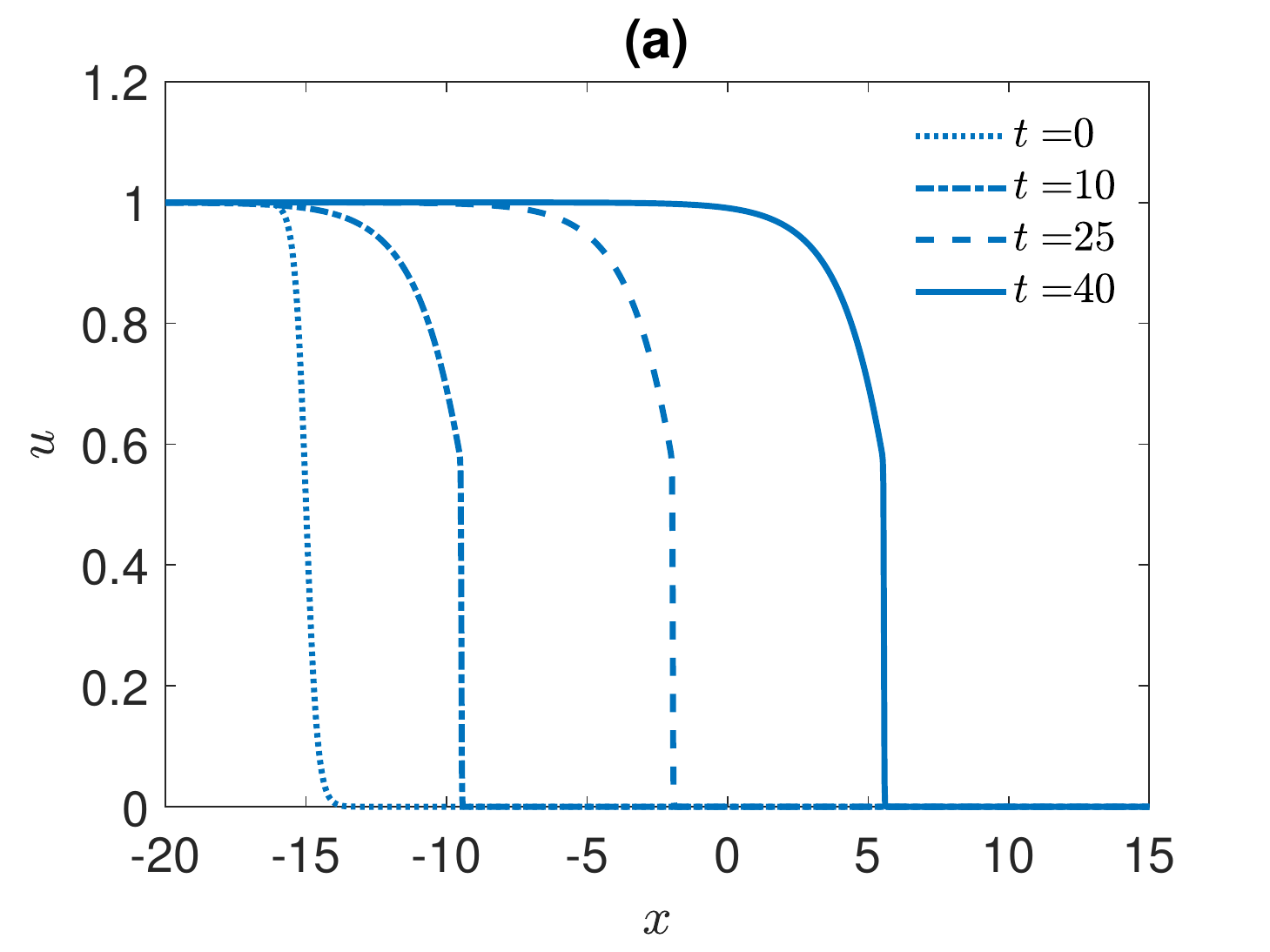}\includegraphics[width=0.3\textwidth]{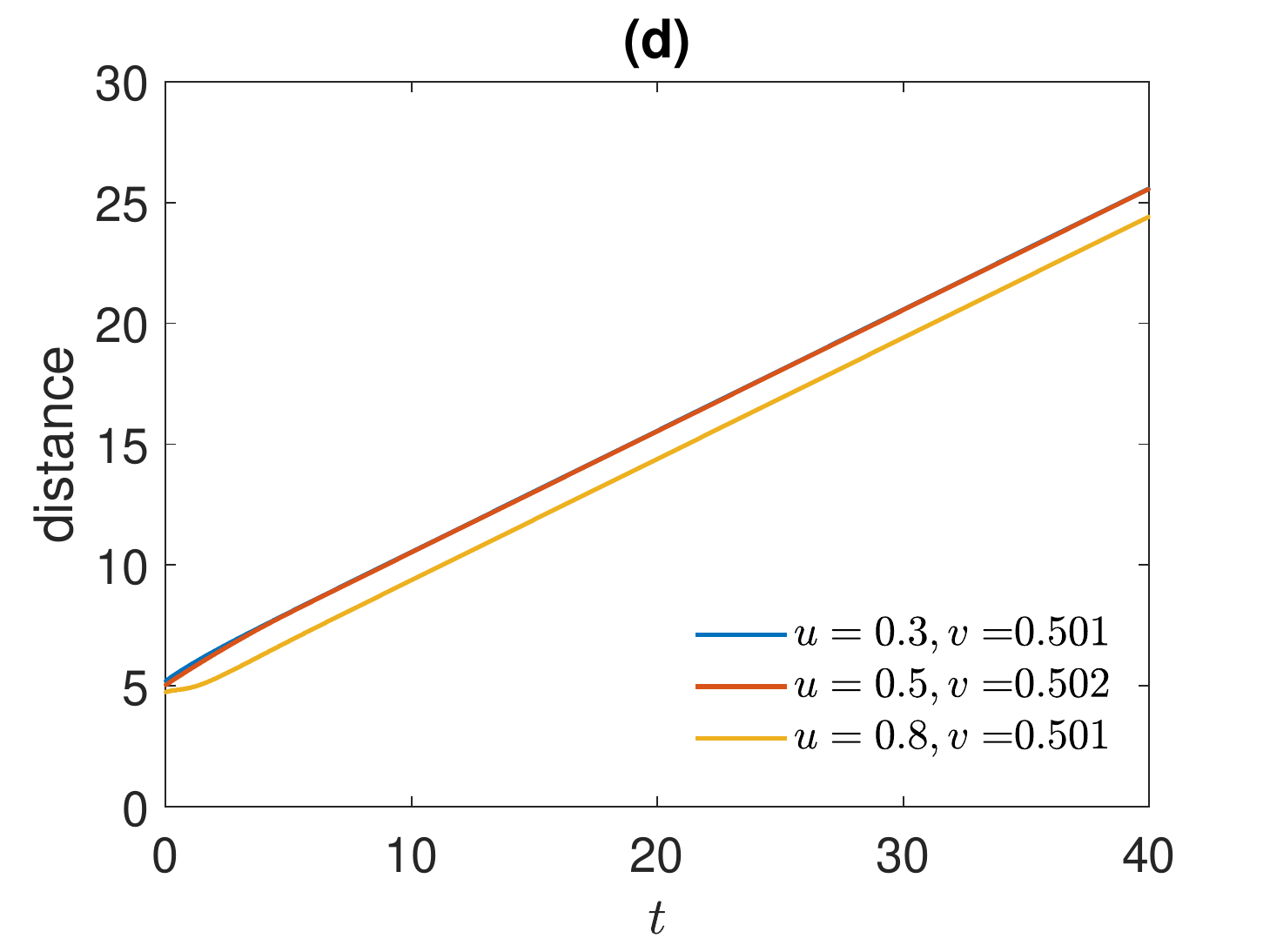}
		\includegraphics[width=0.3\textwidth]{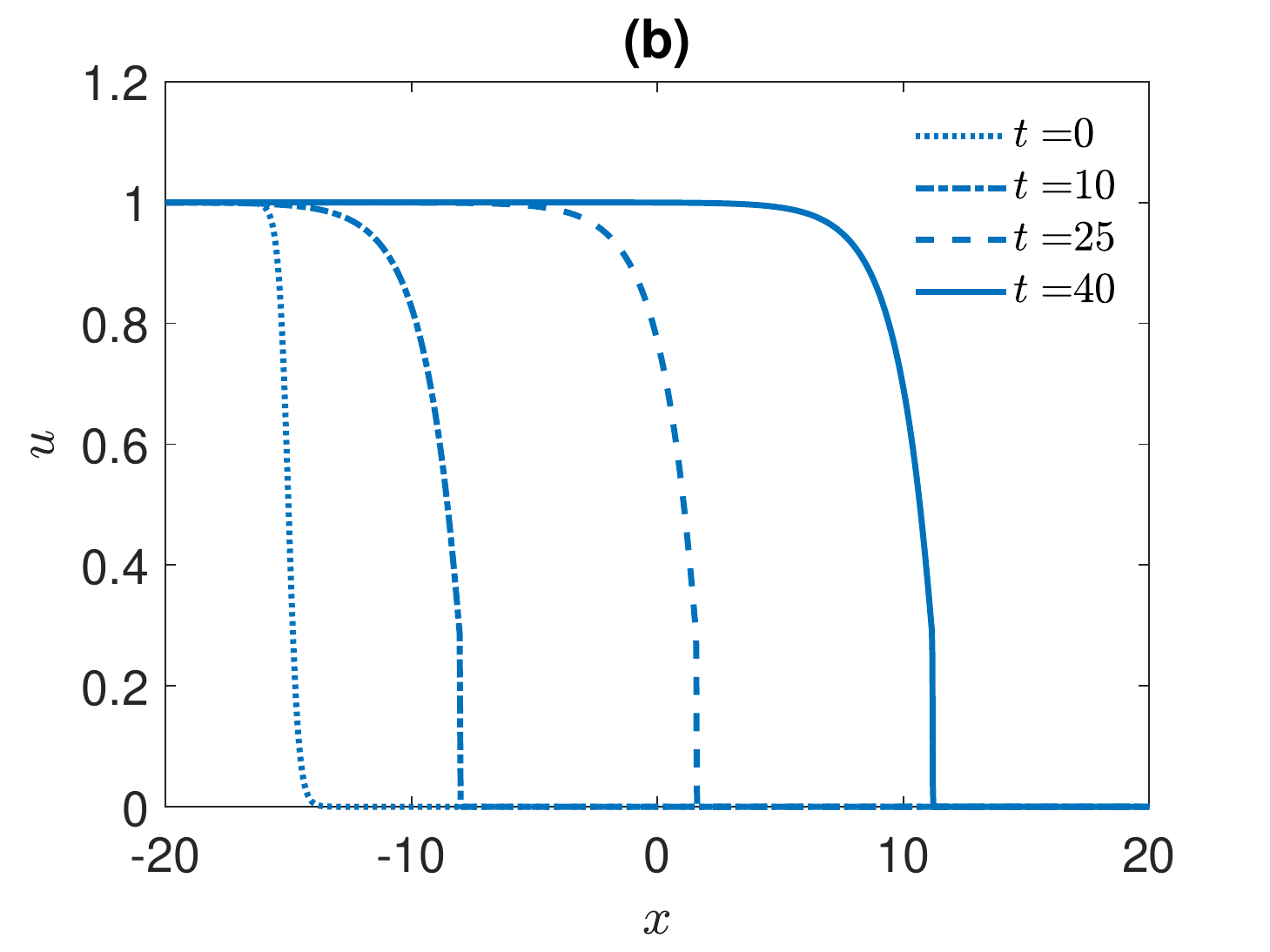}\includegraphics[width=0.3\textwidth]{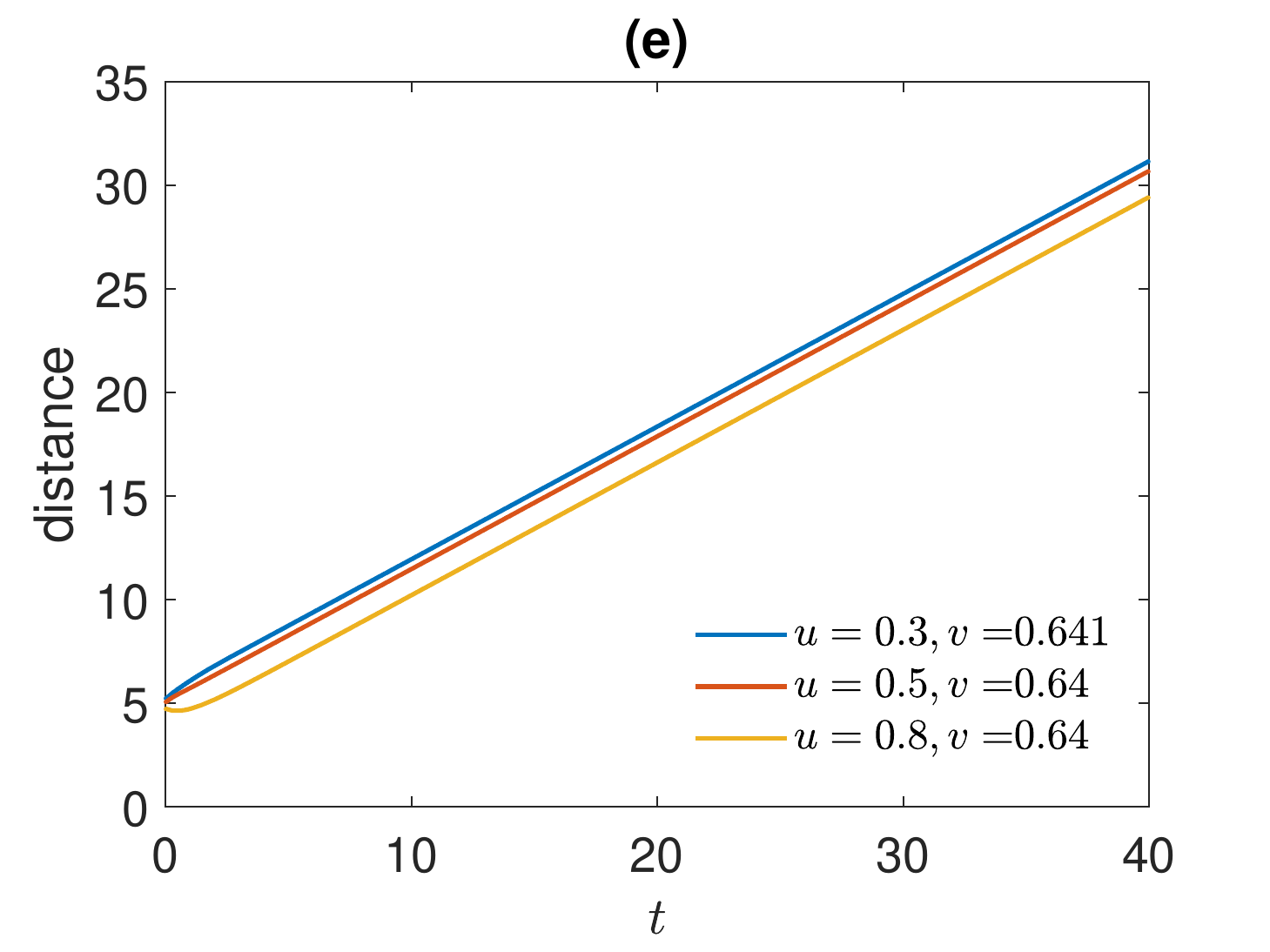}
		\includegraphics[width=0.3\textwidth]{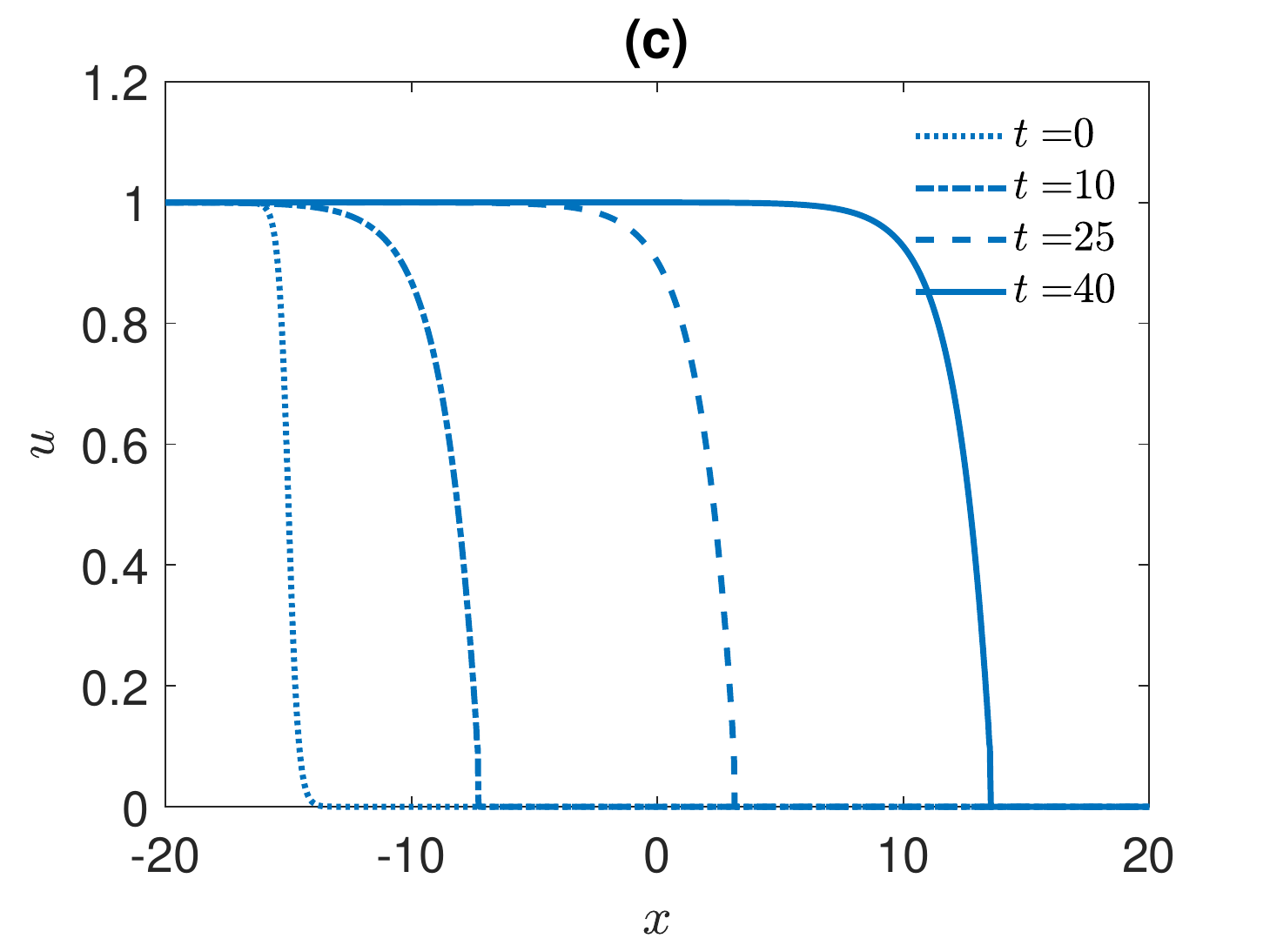}\includegraphics[width=0.3\textwidth]{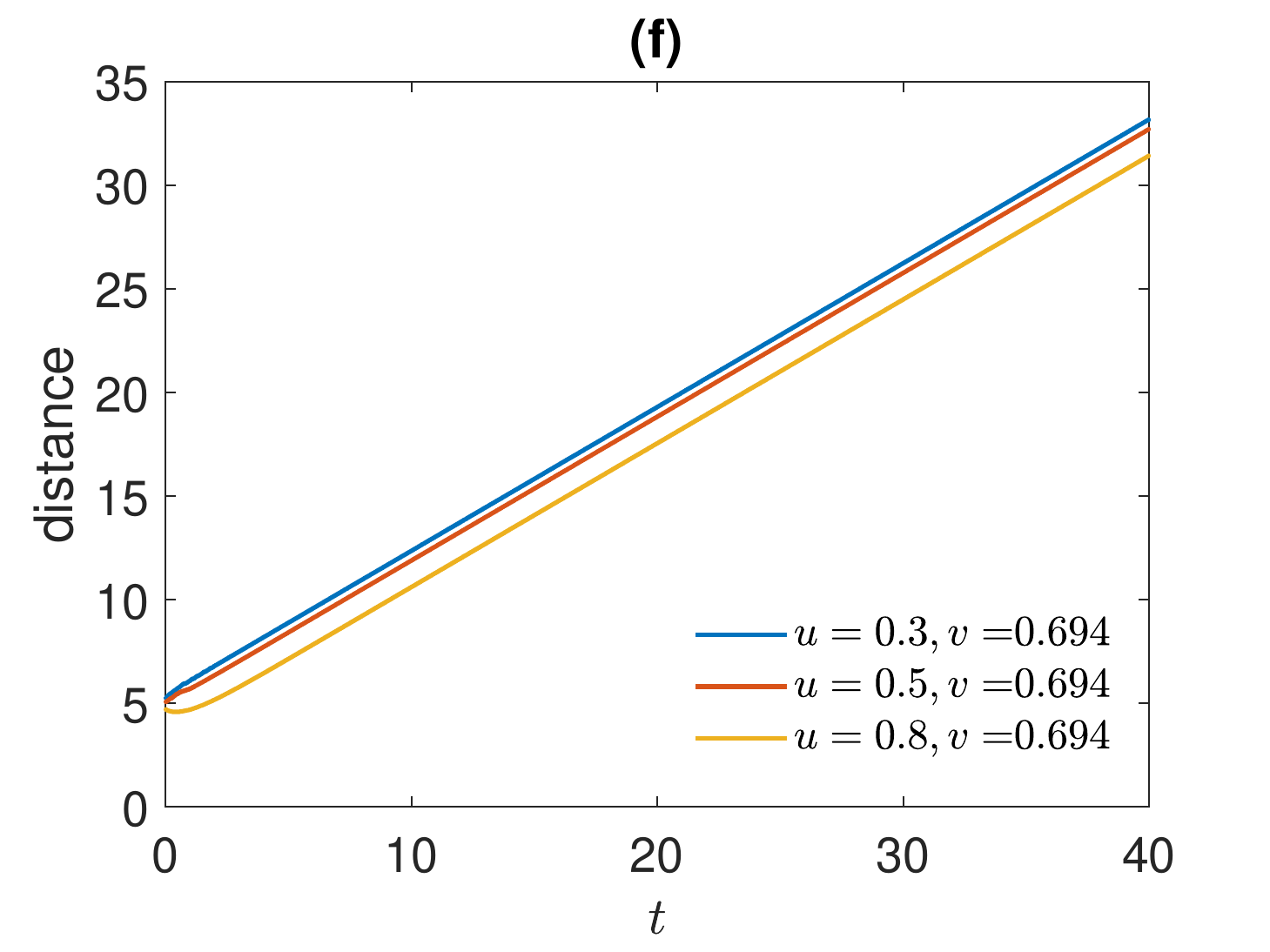}		
	\end{center}
	\caption{\textit{We plot the propagation of the traveling waves for system \eqref{5.1} with the kernel \eqref{eq:rho-epsilon} and  the corresponding evolution of different level sets $ t\longmapsto \xi(t, \beta)+L $.  Figure (a) and (d) represent the evolution of the traveling wave and its level sets when $ \sigma^2=0.5 $. Figure (b) and (e) correspond to the case when $ \sigma^2=0.1 $. Figure (c) and (f) correspond to the case when $ \sigma^2=0.01 $. Our initial value is taken as in \eqref{eq:5.5-initial-value} with $ \alpha =5 $. }}
	\label{FIG6}
\end{figure}

In Figure \ref{FIG6} we can observe that as $ \sigma \to 0 $ in the kernel, the discontinuous jump is gradually vanishing from (a) to (c). Moreover, the critical speed $ c(\sigma) $ is increasing as $ \sigma \to 0 $ and is approaching the critical speed $ c_*=1/\sqrt{2}\approx0.707 $ for the porous medium case.

\section{Properties of the time-dependent solutions}
\label{sec:spreading}

\subsection{The separatrix}

In this section we study the qualitative properties of solutions to \eqref{eq:main} starting from an initial condition supported in $(-\infty, 0]$. 

\begin{prop}[The separatrix]\label{prop:rightmost}
	Let $u$ be a solution integrated along the characteristics to \eqref{eq:main}, starting from $u_0(x)$ satisfying Assumption \ref{assum:initcond-basic}. Let $h^*(t):=h(t,0)$ be the separatrix  (as in Proposition \ref{prop:separatrix}). Then $h^*(t)$ stays at the rightmost boundary of the support of $u(t, \cdot)$, i.e.
	\begin{enumerate}[label={\rm (\roman*)}]
		\item \label{item:ultimatelytrivial2}
			we have
			\begin{equation}
				 u(t, x)=0 \text{ for all } x\geq h^*(t).
			\end{equation}
		\item \label{item:hboundary2}
			for each $t>0$ there exists $\delta>0$ such that  
			\begin{equation}
				u(t, x)>0 \text{ for all } x\in (h^*(t)-\delta, h^*(t)).
			\end{equation}
	\end{enumerate}
\end{prop}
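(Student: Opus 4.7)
The plan is to exploit the fact that the characteristic flow $x\mapsto h(t,x)$ is a strictly increasing homeomorphism of $\mathbb R$, which transports the structural properties of the initial condition (where it vanishes, where it is strictly positive) into positive time. First, I would observe that the velocity field $V(t,y):=-\chi(\rho_x\star u)(t,y)$ is bounded (since $\rho_x\in L^\infty(\mathbb R)$ and $u(t,\cdot)\in L^\infty(\mathbb R)$ by Theorem \ref{thm:well-posed}) and Lipschitz in $y$: indeed, differentiating the convolution and using the defining equation $-\sigma^2\rho_{xx}+\rho=\delta_0$ yields $\partial_y(\rho_x\star u)=\hat\chi\bigl((\rho\star u)-u\bigr)$, which is uniformly bounded. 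Standard Cauchy-Lipschitz theory together with continuous dependence on initial data then imply that for each $t\geq 0$, $x\mapsto h(t,x)$ is a continuous, strictly increasing bijection of $\mathbb R$, with $h(t,x)\to\pm\infty$ as $x\to\pm\infty$ (thanks to the bound $|h(t,x)-x|\leq \|V\|_\infty\, t$).

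The second key observation is that along characteristics, equation \eqref{eq:integrated} is linear in $u$: setting $v(t):=u(t,h(t,x))$ and $a(t):=1+\hat\chi(\rho\star u)(t,h(t,x))-(1+\hat\chi)u(t,h(t,x))$, we have $v'=a\,v$ with $a$ uniformly bounded on $[0,T]$, hence $v(t)=u_0(x)\exp\bigl(\int_0^t a(s)\,ds\bigr)$ for a.e.\ $x\in\mathbb R$. In particular $v(t)$ shares the sign of $u_0(x)$. To upgrade this from an a.e.\ identity to an identity valid for every $x$, I would appeal to the continuity of $u(t,\cdot)$ provided by Proposition \ref{prop:regularity} (which applies because $u_0$ is continuous) and of $h(t,\cdot)$, which together make $x\mapsto u(t,h(t,x))$ continuous. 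Approximating an arbitrary $x_*$ by a sequence $x_n\to x_*$ of ``good'' points where \eqref{eq:integrated} holds, the uniform bound on $a_n$ together with dominated convergence yields the extension of the representation formula to every $x\in\mathbb R$.

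The two conclusions are then immediate. For (i), since $u_0(x)=0$ for $x\geq 0$, the representation formula gives $u(t,h(t,x))=0$ for all such $x$, and by monotonicity and surjectivity of the flow $h(t,[0,+\infty))=[h^*(t),+\infty)$, so $u(t,\cdot)$ vanishes on $[h^*(t),+\infty)$. For (ii), since $u_0(x)>0$ for $x\in(-\delta_0,0)$, the representation gives $u(t,h(t,x))>0$ on this set, whose image under $h(t,\cdot)$ is the open interval $\bigl(h(t,-\delta_0),h^*(t)\bigr)$; we may then take $\delta:=h^*(t)-h(t,-\delta_0)>0$. The main technical obstacle is precisely the a.e.\ versus everywhere issue, which is critical for part (ii): a single bad point near $h^*(t)$ could in principle spoil a left-neighborhood, so one cannot conclude directly from the a.e.\ statement. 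The continuity-plus-dominated-convergence argument sketched above is exactly what closes this gap.
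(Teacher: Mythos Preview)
Your proposal is correct and follows essentially the same approach as the paper: both arguments use that the characteristic flow $x\mapsto h(t,x)$ is a bijection and that the integrated equation \eqref{eq:integrated} preserves the sign of $u_0$. The paper's proof is terser---for (ii) it uses the differential inequality $\frac{\dd}{\dd t}u(t,h(t,x))\geq u(t,h(t,x))\bigl(1-(1+\hat\chi)u(t,h(t,x))\bigr)$ and comparison with the logistic ODE rather than your exponential representation, and it does not address the a.e.\ versus everywhere issue at all---so your treatment of that technical point via continuity of $u(t,\cdot)$ is in fact more careful than the original.
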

\begin{proof}
	By definition the characteristics are well-defined by \eqref{eq:characteristics} as the flow of an ODE. In particular, if $x\geq h^*(t)= h(t, 0) $ there exists $x_0\geq 0$ such that $x=h(t, x_0)$. Since $u_0(x_0)=0$ and in view of \eqref{eq:integrated}, we have indeed $u(t, x)=0$. This proves Item \ref{item:ultimatelytrivial2}

	By  Assumption \ref{assum:initcond-basic}, there exists $\delta_0>0$ such that $u_0(x)>0$ for $x\in (-\delta_0, 0)$.  We remark that
	\begin{align*}
		\frac{\dd }{\dd t}u(t, h(t, x))&=\hat\chi \,u(t, h(t,x))\big((\rho*u)(t, h(t,x))-u(t, h(t,x))\big)+u(t, h(t, x))\big(1-u(t, h(t,x))\big)\\
		&\geq u(t,h(t, x))\big(1-(1+\hat\chi)u(t,h(t,x))\big).
	\end{align*}
	By comparison with the solution to the ODE $v'(t)=v(t)(1-(1+\hat\chi)v(t))$ starting from $v(t=0)=u_0(x)>0$, we deduce that  $u(t, x)\geq v(t)>0$ for each $x\in (h(t, -\delta_0), h^*(t))$. Since $h(t, -\delta_0)<h(t,0)=h^*(t)$, this proves Item \ref{item:hboundary2}.
\end{proof}

Next we investigate the propagation of $u$. 
\begin{prop}[$u$ is propagating]\label{prop:propagating}
	Let $u_0$ satisfy Assumption \ref{assum:initcond-basic} and let $u$ be the solution integrated along the characteristics to \eqref{eq:main} starting from $u(t=0,x)=u_0(x)$. Then $u $ is propagating to the right, i.e.
	\begin{equation}\label{eq:hincreasing}
		\frac{\dd}{\dd t}h^*(t)>0.
	\end{equation}
	Moreover, we have the estimate:
	\begin{equation}\label{eq:maxspeed}
		\frac{\dd }{\dd t}h^*(t)\leq \frac{\chi}{2\sigma}.
	\end{equation}
\end{prop}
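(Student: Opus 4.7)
The approach is to rewrite $\frac{\dd}{\dd t}h^*(t)$ explicitly as an integral against $u(t,\cdot)$ using the characteristic equation \eqref{eq:characteristics}, and then exploit the support information provided by Proposition \ref{prop:rightmost} together with the explicit form of the kernel $\rho$.

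First I would differentiate $h^*(t)=h(t,0)$ along the characteristic using \eqref{eq:characteristics} to obtain
\[
    \frac{\dd}{\dd t}h^*(t)=-\chi\,(\rho_x\star u)(t,h^*(t)).
\]
Since $\rho(x)=\frac{1}{2\sigma}e^{-|x|/\sigma}$ is a continuous tent-shaped function, its distributional derivative is the bounded function $\rho_x(x)=-\frac{1}{2\sigma^2}\mathrm{sign}(x)\,e^{-|x|/\sigma}$ (the jumps in the one-sided derivatives at $0$ cancel, so no Dirac mass appears). Substituting and reorganizing gives
\[
    \frac{\dd}{\dd t}h^*(t)=\frac{\chi}{2\sigma^2}\int_{\mathbb R}\mathrm{sign}(h^*(t)-z)\,e^{-|h^*(t)-z|/\sigma}\,u(t,z)\,\dd z.
\]
By Proposition \ref{prop:rightmost}\ref{item:ultimatelytrivial2}, $u(t,z)=0$ for $z\geq h^*(t)$, so the integrand vanishes on $[h^*(t),+\infty)$ and the expression reduces to the unambiguously signed quantity
\[
    \frac{\dd}{\dd t}h^*(t)=\frac{\chi}{2\sigma^2}\int_{-\infty}^{h^*(t)}e^{-(h^*(t)-z)/\sigma}\,u(t,z)\,\dd z.
\]

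From this representation both estimates follow at once. For the strict positivity \eqref{eq:hincreasing}, I invoke Proposition \ref{prop:rightmost}\ref{item:hboundary2}: there exists $\delta>0$ such that $u(t,\cdot)>0$ on $(h^*(t)-\delta,h^*(t))$; since $u\geq 0$ everywhere (Theorem \ref{thm:well-posed}) and the exponential weight is strictly positive, the integrand is non-negative and strictly positive on a set of positive measure. For the upper bound \eqref{eq:maxspeed}, I use $u\leq 1$ (Theorem \ref{thm:well-posed}) to dominate the integrand by $e^{-(h^*(t)-z)/\sigma}$, whose integral on $(-\infty,h^*(t))$ equals $\sigma$, yielding $\frac{\dd}{\dd t}h^*(t)\leq \frac{\chi}{2\sigma}$.

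There is essentially no obstacle in this argument; the whole content lies in recognizing that evaluating the convolution $\rho_x\star u$ precisely at the right endpoint of the support converts the naively indefinite-sign integral into one of fixed sign, after which positivity and the $L^\infty$ bound on $u$ immediately yield both inequalities. The only care required is to justify that $\rho_x\in L^1(\mathbb R)$ so that $\rho_x\star u$ is well-defined (and in fact uniformly continuous) for a merely bounded $u$.
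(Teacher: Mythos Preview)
Your argument is correct and is essentially identical to the paper's proof: both compute $\frac{\dd}{\dd t}h^*(t)=-\chi(\rho_x\star u)(t,h^*(t))$, expand the kernel explicitly, use the vanishing of $u$ to the right of $h^*(t)$ to reduce to a nonnegative integral, and then invoke positivity of $u$ near $h^*(t)$ and the bound $u\leq 1$ for the two inequalities. The only cosmetic difference is that the paper changes variables to write the integral as $\frac{\chi}{\sigma}\int_0^{+\infty}\rho(y)\,u(t,h^*(t)-y)\,\dd y$.
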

\begin{proof}
	We have the following estimates:
	\begin{align*}
		\frac{\dd}{\dd t} h^*(t)&=-\chi(\rho_x*u)(t, h^*(t))\\
		&=-\chi\int_{-\infty}^{+\infty}\rho_x(y)u(t, h^*(t)-y)\dd y\\
		&=\chi\int_{-\infty}^{+\infty}\frac{\text{sign}(y)}{2\sigma^2}e^{-\frac{|y|}{\sigma}}u(t, h^*(t)-y)\dd y\\
		&=\frac{\chi}{\sigma}\int_0^{+\infty}\rho(y)u(t, h^*(t)-y)\dd y\\
		&>0, 
	\end{align*}
	since $u(t, x)=0$ for all $x> h^*(t)$. \eqref{eq:hincreasing} is proved.

	Then, since $0\leq u\leq 1$, we have
	\begin{align*}
		\frac{\dd}{\dd t} h^*(t)&=\frac{\chi}{\sigma}\int_0^{+\infty}\rho(y)u(t, h^*(t)-y)\dd y\\
		&\leq \frac{\chi}{\sigma}\int_0^{+\infty}\rho(y)\dd y = \frac{\chi}{2\sigma},
	\end{align*}
	which proves \eqref{eq:maxspeed}.
\end{proof}
These first two propositions together yield a proof of Proposition \ref{prop:separatrix}.
\begin{proof}[Proof of Proposition \ref{prop:separatrix}]
	Items \ref{item:ultimatelytrivial} and \ref{item:hboundary} have been proved in Proposition \ref{prop:rightmost}, and the propagating property follows from Proposition \ref{prop:propagating}.
\end{proof}

We continue with a technical lemma that will be used in the proof of Theorem \ref{thm:discont}. 
\begin{lem}[Divergence speed near the separatrix]\label{lem:div-sep}
	Let $u_0(x)$ satisfy Assumptions \ref{assum:initcond-basic} and \ref{assum:initcond-steep} and $u(t,x)$ be the corresponding solution to \eqref{eq:main}. Let $h(t,x)$ be the characteristic flow of $u$ and  $h^*(t)$ be the separatrix of $u$, as defined in Proposition \ref{prop:separatrix}. For all $t\geq 0$ and $x<0 $ we have
		\begin{equation}\label{eq:propgrowth-div}
			\frac{\dd}{\dd t}(h^*(t)-h(t, x))\leq\chi\,(h^*(t)-h(t, x))\sup_{y\in(h(t,x), h^*(t))}u(t,y).
		\end{equation}
\end{lem}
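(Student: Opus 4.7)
The plan is to express the advection speed as a function $\psi(z):=-\chi(\rho_x\star u)(t,z)$ so that the characteristic ODE reads $\frac{\dd}{\dd t}h(t,x)=\psi(h(t,x))$. Subtracting the equations for $h^*(t)$ and $h(t,x)$ and applying the fundamental theorem of calculus gives
\begin{equation*}
	\frac{\dd}{\dd t}\bigl(h^*(t)-h(t,x)\bigr)=\psi(h^*(t))-\psi(h(t,x))=\int_{h(t,x)}^{h^*(t)}\psi'(z)\,\dd z,
\end{equation*}
so everything reduces to a pointwise bound on $\psi'$ on the interval $(h(t,x),h^*(t))$.

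To compute $\psi'$, I would exploit the fact that $\rho$ is the Green's function of the operator $-\sigma^2\partial_{xx}+I$. Differentiating $\rho$ twice in the distributional sense gives the identity $-\sigma^2\rho''+\rho=\delta_0$, hence $\rho''=(\rho-\delta_0)/\sigma^2$, and convolving with $u$ yields $\rho_{xx}\star u=(p-u)/\sigma^2$ where $p=\rho\star u$ is the pressure from \eqref{eq:main-p}. Therefore
\begin{equation*}
	\psi'(z)=-\chi(\rho_{xx}\star u)(t,z)=\hat\chi\bigl(u(t,z)-p(t,z)\bigr).
\end{equation*}
Since $u\geq 0$ and $\rho\geq 0$ imply $p\geq 0$, we bound $\psi'(z)\leq \hat\chi\,u(t,z)$ pointwise, and $u(t,z)\leq\sup_{y\in(h(t,x),h^*(t))}u(t,y)$ by definition of the supremum. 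Integrating this inequality over $(h(t,x),h^*(t))$ yields the claimed differential inequality.

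The calculation is short and rests almost entirely on the Green's function identity; the only mild issue to address is regularity, namely justifying that $\psi\in C^1$ on the relevant domain even when $u$ is only $L^\infty$. This follows because $\psi=-\chi\rho_x\star u$ is a convolution of an $L^\infty$ function with a $C^1$ kernel, and its derivative may then be rewritten as $-\chi\rho_{xx}\star u$ in the sense of distributions, which is the $L^\infty$ function $\hat\chi(u-p)$. I would mention that the constant arising naturally from this computation is $\hat\chi$; it matches the statement once one notes how this constant will ultimately be absorbed into the Grönwall exponent used in Theorem \ref{thm:discont}.
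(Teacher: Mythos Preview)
Your approach is correct and takes a genuinely different route from the paper's. The paper works directly with the convolution: it writes
\[
\frac{\dd}{\dd t}\bigl(h^*(t)-h(t,x)\bigr)=\chi\int_{\mathbb R}\bigl(\rho_x(h(t,x)-y)-\rho_x(h^*(t)-y)\bigr)u(t,y)\,\dd y,
\]
splits the domain at $y=h(t,x)$, observes that $\rho_x$ is increasing on $(0,\infty)$ so the tail over $(-\infty,h(t,x))$ is non-positive, and bounds the remaining integral over $(h(t,x),h^*(t))$ by its length times $\sup u$ (using $|\rho_x|\leq\frac{1}{2\sigma^2}$). Your route via the elliptic identity $\psi'=-\chi p''=\hat\chi(u-p)$ is shorter and more transparent: the sign information that the paper extracts from the monotonicity of $\rho_x$ is packaged into the single inequality $p\geq 0$. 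Both computations in fact yield the constant $\hat\chi$, not $\chi$; you are right to flag this, and the discrepancy in the stated inequality appears to be a slip in the paper (the downstream threshold $\frac{1}{1+\hat\chi+\alpha\chi}$ would correspondingly become $\frac{1}{1+\hat\chi+\alpha\hat\chi}$, without affecting the qualitative conclusion).

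One small correction to your regularity remark: $\rho_x$ is \emph{not} a $C^1$ kernel---it has a jump at the origin---so the justification should go through $p$ instead. Under Assumption~\ref{assum:initcond-basic} the initial datum is continuous, hence $u(t,\cdot)$ is continuous by Proposition~\ref{prop:regularity}; then $-\sigma^2 p''+p=u$ gives $p\in C^2$, so $\psi=-\chi p'\in C^1$ and your application of the fundamental theorem of calculus is legitimate.
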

\begin{proof}
		Recall that, by Proposition \ref{prop:rightmost}, $u(t, x)=0 $ for each $x\geq h^*(t)$. 
		For $x<0$, we notice that:
		\begin{align*}
			\frac{\dd}{\dd t}\big(h^*(t)-h(t, x)\big)&=-\chi(\rho_x\star u)(t, h^*(t)) + \chi(\rho_x\star u)(h(t,x)) \\
			&=\chi\int_{\mathbb R}\big(\rho_x(h(t,x)-y)-\rho_x(h^*(t)-y)\big)u(t, y)\dd y\\
			&=\chi\int_{-\infty}^{h(t,x)}\big(\rho_x(h(t,x)-y)-\rho_x(h^*(t)-y)\big)u(t, y)\dd y\\
			&\quad +\chi\int_{h(t,x)}^{h^*(t)}\big(\rho_x(h(t,x)-y)-\rho_x(h^*(t)-y)\big)u(t, y)\dd y.
		\end{align*}
		Therefore, 
		\begin{align*}
			\notag \frac{\dd}{\dd t}(h^*(t)-h(t, x))&\leq\chi\int_{-\infty}^{h(t, x)}\big(\rho_x(h(t,x)-y)-\rho_x(h^*(t)-y)\big)u(t, y)\dd y  \\
			\notag&\quad +\chi(h^*(t)-h(t, x))\times\sup_{y\in(h(t,x), h^*(t))}u(t,y).
		\end{align*}
		Since $\rho_x(y)=-\frac{1}{2\sigma^2}\text{sign}(y)e^{-\frac{|y|}{\sigma}}$ is increasing on $(0, +\infty)$, we have  
		\begin{equation*}
			\rho_x(h(t,x)-y)-\rho_x(h^*(t)-y)\leq 0
		\end{equation*}
		for each $y\leq  h(t,x)$, which shows \eqref{eq:propgrowth-div}. Lemma \ref{lem:div-sep} is proved.
\end{proof}

\begin{prop}[Formation of a discontinuity] \label{prop:growth}
	Let $u_0(x)$ satisfy Assumptions \ref{assum:initcond-basic} and \ref{assum:initcond-steep} and $u(t,x)$ be the corresponding solution to \eqref{eq:main}. For all $\delta>0$ we have 
	\begin{equation}\label{eq:discont2}
		\limsup_{t\to+\infty}\sup_{x\in (h^*(t, x)-\delta, h^*(t))}u(t,x)\geq \frac{1}{1+\hat\chi+\alpha\chi}>0.
	\end{equation}
\end{prop}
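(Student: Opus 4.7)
I would argue by contradiction. Suppose the conclusion fails, so there exists $\delta>0$ such that $\limsup_{t\to+\infty}\sup_{x\in(h^*(t)-\delta,h^*(t))}u(t,x)<\frac{1}{1+\hat\chi+\alpha\chi}$. Pick $M$ strictly between this $\limsup$ and $\frac{1}{1+\hat\chi+\alpha\chi}$, and $T_0>0$ such that $\sup_{x\in(h^*(t)-\delta,h^*(t))}u(t,x)<M$ for all $t\geq T_0$. The idea is to follow a characteristic starting from a point $x_0\in(-\delta_0,0)$ very close to $0$ and track simultaneously $v(t):=u(t,h(t,x_0))$ and $d(t):=h^*(t)-h(t,x_0)$. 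The competition between how fast $v$ grows and how fast $d$ can expand is controlled by two exponential rates whose comparison is exactly the inequality $M<\frac{1}{1+\hat\chi+\alpha\chi}$.

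\paragraph{Lower bound on $v$.} Using the integrated ODE \eqref{eq:integrated} and the non-negativity of $\rho\star u$, I obtain $v'(t)\geq v(t)\bigl(1-(1+\hat\chi)v(t)\bigr)$ along the characteristic. Hence, as long as $v(t)\leq M$, we have $v'(t)\geq (1-(1+\hat\chi)M)\,v(t)$, which is strictly positive by the choice of $M$. Combined with the polynomial lower bound $v(0)=u_0(x_0)\geq \gamma|x_0|^\alpha$ from Assumption \ref{assum:initcond-steep}, this forces $v$ to reach $M$ in finite time $T=T(x_0)$, with the explicit bound
\[
T\leq \frac{1}{1-(1+\hat\chi)M}\,\ln\!\left(\frac{M}{\gamma|x_0|^\alpha}\right).
\]
Moreover, since $u_0(x_0)\to 0$ as $x_0\to 0^-$, a crude upper bound $v'\leq (1+\hat\chi)v$ shows $T\to+\infty$ as $x_0\to 0^-$, so in particular $T>T_0$ for $|x_0|$ small enough.

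\paragraph{Upper bound on $d$.} Here I invoke Lemma \ref{lem:div-sep}: as long as $d(t)\leq \delta$, the interval $(h(t,x_0),h^*(t))$ sits inside $(h^*(t)-\delta,h^*(t))$, so for $t\geq T_0$ the supremum appearing in \eqref{eq:propgrowth-div} is $<M$, giving $d'(t)\leq \chi M\,d(t)$. For $t\in[0,T_0]$ I simply use $u\leq 1$, hence $d(T_0)\leq |x_0|e^{\chi T_0}$. Letting $\tau$ be the first time $d(\tau)=\delta$, a Grönwall argument yields $d(t)\leq |x_0|e^{\chi T_0}e^{\chi M(t-T_0)}$ on $[T_0,\tau)$.

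\paragraph{Comparing the time scales and conclusion.} I need to show that $\tau>T$, i.e.\ $d(T)<\delta$. Plugging the upper bound on $T$ into the bound on $d$, it suffices that
\[
|x_0|\,e^{\chi T_0}\,\Bigl(\tfrac{M}{\gamma|x_0|^\alpha}\Bigr)^{\!\frac{\chi M}{1-(1+\hat\chi)M}}<\delta,
\]
i.e., $|x_0|^{\,1-\frac{\alpha\chi M}{1-(1+\hat\chi)M}}$ is small. The exponent is strictly positive precisely because $M(1+\hat\chi+\alpha\chi)<1$, so choosing $|x_0|$ small enough makes the inequality hold. Then at time $T$, $v(T)=M$ while $h(T,x_0)\in(h^*(T)-\delta,h^*(T))$ and $T\geq T_0$, so
\[
M=v(T)\leq \sup_{x\in(h^*(T)-\delta,h^*(T))}u(T,x)<M,
\]
a contradiction. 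The main obstacle is precisely this bootstrap: to use Lemma \ref{lem:div-sep} with constant $M$ rather than $1$, I must guarantee a priori that the characteristic stays in the $\delta$-window, yet the size of that window shrinks as I try to track points ever closer to the separatrix. The threshold $\frac{1}{1+\hat\chi+\alpha\chi}$ appears as the exact balance point where $v$ grows just fast enough (relative to $d$) to make the bootstrap close.
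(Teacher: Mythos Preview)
Your argument is correct and follows essentially the same strategy as the paper: a contradiction based on comparing, along a characteristic issued from $x_0$ close to $0$, the exponential growth rate of $v(t)=u(t,h(t,x_0))$ (at least $1-(1+\hat\chi)M$ via \eqref{eq:integrated}) against the exponential growth rate of the gap $d(t)=h^*(t)-h(t,x_0)$ (at most $\chi M$ via Lemma~\ref{lem:div-sep}), the threshold $\frac{1}{1+\hat\chi+\alpha\chi}$ arising from balancing these two rates against the initial scaling $v(0)\gtrsim d(0)^\alpha$.

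The only organizational difference is that the paper splits the argument into two steps: it first proves the weaker statement $\sup_{t>0}\sup_{x\in(h^*(t)-\delta,h^*(t))}u(t,x)\geq \frac{1}{1+\hat\chi+\alpha\chi}$ (under the cleaner contradiction hypothesis that the bound holds for \emph{all} $t>0$), and then upgrades the $\sup$ to a $\limsup$ by a soft uniform-continuity argument on the compact set $[0,T]\times[-\delta,0]$, using that $u(t,h^*(t))=0$. You instead go directly for the $\limsup$ and absorb the initial segment $[0,T_0]$ with the crude bound $u\leq 1$ in Lemma~\ref{lem:div-sep}, then ensure $T>T_0$ by letting $|x_0|\to 0^-$. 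Your route is slightly more streamlined; the paper's route keeps the growth-rate computation marginally cleaner. Both are the same proof in substance.
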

\begin{proof}
	We divide the proof in 2 steps.

	\begin{stepping}
		\step We show that for all $\delta>0$, 
		\begin{equation}\label{eq:discont-sup}
			\sup_{t>0}\sup_{x\in (h^*(t)-\delta, h^*(t))}u(t,x)\geq \frac{1}{1+\hat\chi+\alpha\chi}.
		\end{equation}

		Assume by contradiction that there exists $\delta>0$  such that 
		\begin{equation}\label{eq:propgrowth-usmall}
			\forall t>0, \sup_{x\in (h^*(t)-\delta, h^*(t))}u(t,x)\leq \eta< \frac{1}{1+\hat\chi+\alpha\chi},
		\end{equation}
		where $\alpha\geq 1$ is the constant from Assumption \ref{assum:initcond-steep}. 
		
		We remark that the following inequality holds for  $x\in (h^*(t)-\delta, h^*(t))$.
		\begin{align}
			\frac{\dd}{\dd t} u(t, h(t,x))&=\hat\chi \,u(t, h(t,x))(\rho\star u)(t,h(t,x))+u(t,h(t,x))\big(1-(1+\hat\chi)u(t,h(t,x))\big)\notag \\
			&\geq u(t,h(t,x))\big(1-(1+\hat\chi)u(t,h(t,x))\big)\geq u(t, h(t, x))\left(1-\frac{1+\hat\chi}{1+\hat\chi+\alpha\chi}\right),\label{eq:propgrowth-dyn}
		\end{align}
		therefore 
		\begin{equation*}
			u(t, h(t,x))\geq u(0, x)\exp\left(\big(1-(1+\hat\chi)\eta\big)t\right),
		\end{equation*}
		provided the characteristic $h(t, x)$ does not leave the cylinder $(h^*(s)-\delta, h^*(s))$ for any $0\leq s\leq t$.

		Next by \eqref{eq:propgrowth-div} and \eqref{eq:propgrowth-usmall}, we have
		\begin{equation*}
			\frac{\dd}{\dd t}\big(h^*(t)-h(t,x)\big)\leq \chi\big(h^*(t)-h(t,x)\big)\times\eta,
		\end{equation*}
		for each $ x\in(h^*(t)-\delta, h^*(t))$. Hence  by Grönwall's Lemma 
		\begin{equation*}
			\big(h^*(t)-h(t,x)\big)\leq -xe^{\eta\chi t},
		\end{equation*}
		provided the characteristic $h(t, x)$ does not leave the cylinder $(h^*(s)-\delta, h^*(s))$ for any $0\leq s\leq t$. In particular for $0>-\frac{1}{2}\delta e^{-\eta\chi t}\geq x\geq -\delta e^{-\eta\chi t}$, we find
		\begin{align*}
			u(t, h(t,x))&\geq u(0, x)\exp\left(\left(1-\frac{1+\hat\chi}{1+\hat\chi+\alpha\chi}\right)t\right)\geq  \gamma (-x)^\alpha\exp\left(\left(1-\frac{1+\hat\chi}{1+\hat\chi+\alpha\chi}\right)t\right)\\
			&\geq\frac{1}{2^\alpha} \gamma \delta^\alpha \exp\left(\big(1-(1+\hat\chi+\alpha\chi)\eta\big) t \right)\xrightarrow[t\to+\infty]{}+\infty, 
		\end{align*}
		by our assumption that $\eta<\frac{1}{1+\hat\chi+\alpha\chi}$. This is a contradiction.

		\step We show \eqref{eq:discont2}.

		Assume by contradiction that there exists $T>0$ and $\delta>0$ such that 
		\begin{equation*}
			\sup_{t\geq T} \sup_{x\in[h^*(t)-\delta, h^*(t)]}u(t, x)< \frac{1}{1+\hat\chi+\alpha\chi}.
		\end{equation*}
		
		Since the function $u(t, x+h^*(t))$ is continuous on the compact set $[0, T]\times [-\delta, 0]$, it is uniformly continuous on this set and hence (recall that $u(t, h^*(t))=0$)  there exists $0<\delta_0\leq \delta $ such that 
		\begin{equation*}
			\sup_{t\in[0, T], x\in [-\delta_0, 0]}u(t, x+h^*(t))=\sup_{t\in[0, T], x\in [-\delta_0, 0]}\big(u(t, x+h^*(t))-u(t, h^*(t))\big)\leq\frac{1}{1+\hat\chi+\alpha\chi}.
		\end{equation*}
		Hence we  conclude
		\begin{equation*}
			\sup_{t>0, x\in [-\delta_0, 0]}u(t, x-h^*(t))\leq\frac{1}{1+\hat\chi+\alpha\chi}.
		\end{equation*}
		This is in contradiction with Step 1. Proposition \ref{prop:growth} is proved.
	\end{stepping}
\end{proof}

\begin{prop}[Refined estimate on the level sets]\label{prop:levelset}
	Let $u_0(x)$ satisfy Assumption \ref{assum:initcond-basic} and \ref{assum:initcond-steep}. Define    
	\begin{equation*}
		\xi(t, \beta):=\sup\{x\in\mathbb R\,|\, u(t, x)= \beta\}
	\end{equation*} 
	for any $ 0<\beta< \frac{1}{1+\hat\chi+\alpha\chi}  $.
	Then, the level set function $\xi(t, \beta)$ converges exponentially fast to $h^*(t)$ 
	\begin{equation}\label{eq:levelset2}
		h^*(t)-\left(\frac{\beta}{\gamma}\right)^{\frac{1}{\alpha}}e^{-\frac{\eta}{2\alpha}t}\leq \xi(t, \beta)  \leq h^*(t),
	\end{equation}
	for each $0<\beta<\frac{1}{1+\hat\chi+\alpha\chi}$, where $\eta$ is given by
	\begin{equation*}
		\eta:=1-\frac{1+\hat\chi+\alpha\chi}{\beta}\in (0,1).
	\end{equation*}
\end{prop}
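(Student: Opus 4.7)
The proof of the right-hand inequality $\xi(t,\beta) \leq h^*(t)$ is immediate from Proposition \ref{prop:separatrix}\ref{item:ultimatelytrivial}: since $u(t,x) = 0$ for $x \geq h^*(t)$, any point where $u(t,x) = \beta > 0$ must lie strictly to the left of $h^*(t)$. The substance is in the lower inequality. I first set $\eta := 1 - (1+\hat\chi+\alpha\chi)\beta$, which lies in $(0,1)$ precisely because $0 < \beta < \frac{1}{1+\hat\chi+\alpha\chi}$; the printed formula $\eta = 1 - \frac{1+\hat\chi+\alpha\chi}{\beta}$ appears to be a typographical error.

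The plan is to identify a specific characteristic carrying the value $\beta$ exactly at time $t$ and locate it relative to $h^*(t)$. Let $x_0^* = x_0^*(t)$ denote the largest $x_0 \in (-\delta_0, 0)$ with $u(t, h(t, x_0)) = \beta$; by continuity of $u$ (Proposition \ref{prop:regularity}) one has $\xi(t,\beta) = h(t, x_0^*)$. I would then combine two exponential estimates, in the spirit of Proposition \ref{prop:growth}. First, setting $\phi(s) := u(s, h(s, x_0^*))$, the integrated equation \eqref{eq:integrated} and non-negativity of $\rho \star u$ give $\phi'(s) \geq \phi(s)(1 - (1+\hat\chi)\phi(s))$; as $\phi \leq \beta < 1/(1+\hat\chi)$ on $[0,t]$, Gr\"onwall's inequality yields $\beta = \phi(t) \geq u_0(x_0^*) e^{(1-(1+\hat\chi)\beta)t}$, and combined with Assumption \ref{assum:initcond-steep} one deduces $|x_0^*| \leq (\beta/\gamma)^{1/\alpha} e^{-(1-(1+\hat\chi)\beta)t/\alpha}$. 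Second, applying Lemma \ref{lem:div-sep} with the sup bound $\leq \beta$ on $(h(s, x_0^*), h^*(s))$ gives $\frac{\dd}{\dd s}(h^*(s) - h(s, x_0^*)) \leq \chi \beta(h^*(s) - h(s, x_0^*))$, and a second Gr\"onwall yields $h^*(t) - h(t, x_0^*) \leq |x_0^*| e^{\chi\beta t}$. Multiplying, the overall exponent becomes $-(1-(1+\hat\chi)\beta)/\alpha + \chi\beta = -\eta/\alpha$, so $h^*(t) - \xi(t,\beta) \leq (\beta/\gamma)^{1/\alpha} e^{-\eta t/\alpha}$, which a fortiori implies the stated bound.

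The key obstacle is justifying the sup condition $\sup_{y \in (h(s, x_0^*), h^*(s))} u(s, y) \leq \beta$ for all $s \in [0,t]$, which is not automatic at times $s < t$. The natural argument runs as follows: the interval $(h(s, x_0^*), h^*(s))$ consists precisely of characteristics $h(s, x)$ with $x \in (x_0^*, 0)$; by maximality of $x_0^*$ we have $u(t, h(t, x)) < \beta$ for such $x$; and since the ODE $\phi'(s, x) = \phi(1 + \hat\chi P - (1+\hat\chi)\phi)$ is strictly increasing in $s$ whenever $\phi < 1/(1+\hat\chi)$, backward propagation from $s = t$ yields $\phi(s, x) \leq \phi(t, x) < \beta$ on $[0,t]$. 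The case where some characteristic transiently has $\phi > 1/(1+\hat\chi)$ is more delicate and likely requires either a bootstrap or approximation argument on the interior characteristics; I expect this technicality to be the source of the extra factor of two appearing in the denominator of the exponent in the stated bound.
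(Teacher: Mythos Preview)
Your approach is correct and takes a genuinely different route from the paper's; moreover, the ``obstacle'' you flag dissolves on closer inspection, so your argument in fact delivers the sharper exponent $-\eta/\alpha$ rather than the paper's $-\eta/(2\alpha)$.

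The paper does not track a single characteristic backwards. Instead it proves a short-time estimate and iterates: on $[0,t^*]$ with $t^*=(1+\hat\chi)^{-1}\ln\bigl(1+\tfrac{\eta}{2(1-\eta)}\bigr)$ it uses the crude upper bound $\tfrac{\dd}{\dd t}u(t,h(t,x))\le (1+\hat\chi)u(t,h(t,x))$ to control \emph{all} characteristics emanating from $[\xi(0,\beta^*),0]$ (with $\beta^*=(1-\eta)/(1+\hat\chi+\alpha\chi)$) uniformly by $(1-\eta/2)/(1+\hat\chi+\alpha\chi)$. That uniform bound is fed simultaneously into Lemma~\ref{lem:div-sep} and the growth inequality, yielding \eqref{eq:levelset2} on $[0,t^*]$; the general case follows by restarting at $t^*$ with an improved constant $\bar\gamma=\gamma e^{\eta t^*/2}$ and iterating. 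The price of the crude growth bound plus restart is precisely the extra factor $1/2$ in the exponent.

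Your worry that some interior characteristic could transiently exceed $1/(1+\hat\chi)$ is unfounded. The comparison $\phi'\ge\phi(1-(1+\hat\chi)\phi)$ shows that $\phi$ dominates the logistic flow $v$ with the same value at any chosen time $s_0$. For that flow, the interval $(\beta,\infty)$ is forward-invariant because $\beta<1/(1+\hat\chi)$: either $v(s_0)\in(\beta,1/(1+\hat\chi))$ and $v$ increases, or $v(s_0)\ge 1/(1+\hat\chi)$ and $v$ stays above $1/(1+\hat\chi)>\beta$. Hence if $\phi(s_0,x)>\beta$ for some $s_0<t$ and $x\in[x_0^*,0)$, then $\phi(t,x)\ge v(t)>\beta$, contradicting the maximality of $x_0^*$. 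Thus $\sup_{y\in(h(s,x_0^*),h^*(s))}u(s,y)\le\beta$ holds on all of $[0,t]$ with no caveat, and your two Gr\"onwall steps combine directly to give $h^*(t)-\xi(t,\beta)\le(\beta/\gamma)^{1/\alpha}e^{-\eta t/\alpha}$. You are also right that the printed definition of $\eta$ should read $\eta=1-(1+\hat\chi+\alpha\chi)\beta$.
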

\begin{proof}
	Let $\eta\in (0,1) $ be given and set $\beta^*:=\frac{1-\eta}{1+\hat\chi+\alpha\chi}$. Let us first remark that for any $ \beta \in (0,\beta^*) $, $\xi(t, \beta)$ is well-defined  by the continuity of $x\mapsto u(t,x)$ and Assumption \ref{assum:initcond-steep}, that $u(t, \xi(t, \beta))=\beta $ and that $\sup_{x\in (\xi(t, \beta), h^*(t))}u(t, x)\leq \beta$. Moreover $\xi(0, \beta)<0$ and $u_0(\xi(0,\beta))=\beta\geq \gamma \big|\xi(0, \beta)\big|^\alpha$, therefore  
	\begin{equation}\label{eq:init-lset}
		\xi(0, \beta)\geq -\left(\frac{\beta}{\gamma}\right)^{\frac{1}{\alpha}}
	\end{equation}
	for each $0<\beta\leq \beta^*=\frac{1-\eta}{1+\hat\chi+\alpha\chi}$.

	\begin{stepping}
		\step We show that if $u_0$ satisfies Assumption \ref{assum:initcond-basic} and \eqref{eq:init-lset}, then 
		\begin{equation}\label{eq:levelset-goal}
			\xi(t, \beta)\geq h^*(t)-\left(\frac{\beta}{\gamma}\right)^{\frac{1}{\alpha}}e^{\frac{\eta}{2\alpha}t},
		\end{equation}
		for all $0\leq t\leq t^*:=\frac{1}{1+\hat\chi}\ln\left(1+\frac{\eta}{2(1-\eta)}\right)$.

		Let $0<\beta\leq \beta^*$. We remark that, by Assumption \ref{assum:initcond-basic}, we have $0\leq u(t, x)\leq 1$ hence $0\leq (\rho\star u)(t, x)\leq 1$. It follows that, for all $t\geq 0$,
		\begin{equation*}
			\frac{\dd}{\dd t}u(t, h(t, x))=u(t, h(t, x))\big(1+\hat\chi\rho\star u-(1+\hat\chi)u(t, h(t, x))\big)\leq (1+\hat\chi)u(t, h(t, x)).
		\end{equation*}
		In the remaining part of Step 1 we  consider  $t\in[0, t^*]$. Using \eqref{eq:propgrowth-div} from Lemma \ref{lem:div-sep}
		we establish the following estimates on $u$ and $h$ for  $0\leq t\leq t^* $ and $ \xi\left(0, \beta^*\right)\leq x\leq 0$:

		\medskip

		$\bullet$ Since $\frac{\dd\phantom{t}}{\dd t}u(t, h(t, x))\leq (1+\hat\chi)u(t, h(t,x))$ we have $u(t, h(t, x))\leq u_0(x)e^{(1+\hat\chi)t}$ for all $t\leq t^*$  and hence if  $x\geq \xi(0,\beta^*)$,
			\begin{equation}\label{eq:lset-above}
				u(t, h(t, x))\leq \beta^*e^{\ln\left(1+\frac{\eta}{2(1-\eta)}\right)}=\frac{1-\eta}{1+\hat\chi+\alpha\chi}\left(1+\frac{\eta}{2(1-\eta)}\right)=\frac{1-\frac{\eta}{2}}{1+\hat\chi+\alpha\chi}.
			\end{equation}

		$\bullet$ Using  \eqref{eq:lset-above} in the equation along the characteristic \eqref{eq:integrated}:
		\begin{align*}
			\frac{\dd\phantom{t}}{\dd t} u(t, h(t, x))&=u(t, h(t,x))\big(1+\hat\chi(\rho\star u)(t, h(t, x))-(1+\hat\chi)u(t, h(t,x))\big)\\
			&\geq \left(1-\frac{(1+\hat\chi)(1-\frac{\eta}{2})}{1+\hat\chi+\alpha\chi}\right)u(t,h(t,x)),
		\end{align*} 
			we get 
			\begin{equation}\label{eq:lset-below}
				u(t, h(t, x))\geq u_0(x)\exp\left[ \left(1-\frac{(1+\hat\chi)(1-\frac{\eta}{2})}{1+\hat\chi+\alpha\chi}\right)t\right]
			\end{equation}

		$\bullet$ For all $x\in (\xi(0, \beta^*), 0)$, since 
		 \[ \sup_{y\in (h(t,x),h^*(t))} u(t,y) \leq \sup_{y\in (h(t,\xi(0,\beta^*)),h^*(t)} u(t,y)\leq \frac{1-\frac{\eta}{2}}{1+\hat\chi+\alpha\chi}, \]
		we have by  \eqref{eq:propgrowth-div}:
		\begin{equation*}
			h^*(t)-h(t, x)\leq \exp\left({\frac{(1-\frac{\eta}{2})\chi}{1+\hat\chi+\alpha\chi}t}\right)(h^*(0)-h(0, x)),
		\end{equation*}
		hence
			\begin{equation}\label{eq:lset-char}
				h(t, x) \geq h^*(t)+x\exp\left({\frac{(1-\frac{\eta}{2})\chi}{1+\hat\chi+\alpha\chi}t}\right).
			\end{equation}

			\medskip

		Since $\beta\leq \beta^*$, we have $\xi(0, \beta)\geq \xi(0, \beta^*)$. Using \eqref{eq:lset-below} with $x=\xi(0, \beta)$ we find that 
		\begin{equation*}
			u(t, h(t, \xi(0, \beta)))\geq \beta  \exp\left[ \left(1-\frac{(1+\hat\chi)(1-\frac{\eta}{2})}{1+\hat\chi+\alpha\chi}\right)t\right],
		\end{equation*}
		which implies 
		\begin{equation*}
			\xi\left(t, \beta  \exp\left[ \left(1-\frac{(1+\hat\chi)(1-\frac{\eta}{2})}{1+\hat\chi+\alpha\chi}\right)t\right]\right)\geq h(t, \xi(0, \beta)).
		\end{equation*}
		Now by using $x=\xi(0, \beta)$ in \eqref{eq:lset-char}, we obtain 
		\begin{equation*}
			h(t, \xi(0, \beta))\geq h^*(t)+\xi(0, \beta)\exp\left({\frac{(1-\frac{\eta}{2})\chi}{1+\hat\chi+\alpha\chi}t}\right).
		\end{equation*}
		Using \eqref{eq:init-lset}   we find that 
		\begin{equation*}
			\xi\left(0, \beta  \exp\left[- \left(1-\frac{(1+\hat\chi)(1-\frac{\eta}{2})}{1+\hat\chi+\alpha\chi}\right)t\right]\right)\geq-\left(\frac{\beta}{\gamma}\right)^{\frac{1}{\alpha}} \exp\left[-\frac{1}{\alpha} \left(1-\frac{(1+\hat\chi)(1-\frac{\eta}{2})}{1+\hat\chi+\alpha\chi}\right)t\right]
		\end{equation*}
		which leads to 
		\begin{align*}
			\xi(t, \beta)&\geq h^*(t)-\left(\frac{\beta}{\gamma}\right)^{\frac{1}{\alpha}}\exp\left[-\frac{1}{\alpha} \left(1-\frac{(1+\hat\chi)(1-\frac{\eta}{2})}{1+\hat\chi+\alpha\chi}\right)t + \frac{(1-\frac{\eta}{2})\chi}{1+\hat\chi+\alpha\chi}t\right]\\
			&=h^*(t)-\left(\frac{\beta}{\gamma}\right)^{\frac{1}{\alpha}}\exp\left[-\frac{\eta}{2\alpha}t\right]
		\end{align*}
		and this estimate holds for each $0\leq t\leq t^* $ and $0<\beta\leq \beta^*$.

		\step We show that the estimate \eqref{eq:levelset-goal} can be extended by induction.

		Define $\bar u_0(x):=u(t^*, x+h(t^*))$ and $\bar\xi(t, \beta)=\xi(t+t^*, \beta)-h^*(t^*)$. We have for each $0<\beta\leq \beta^*$
		\begin{equation*}
			\bar\xi(0, \beta)\geq- \left(\frac{\beta}{\bar\gamma}\right)^{\frac{1}{\alpha}},
		\end{equation*}
	where $\bar\gamma=\gamma e^{\frac{\eta }{2}t^*}$.  In particular the inequality \eqref{eq:init-lset} is satisfied by $\bar u_0(x)$, as well as  Assumption \ref{assum:initcond-basic}. We can apply Step 1 and \eqref{eq:levelset-goal} gives
	\begin{align*}
		\bar\xi(t, \beta)&\geq \bar h^*(t)-\left(\frac{\beta}{\bar\gamma}\right)^{\frac{1}{\alpha}}e^{-\frac{\eta}{2\alpha}t}=h(t, h^*(t))-h^*(t^*)-\left(\frac{\beta}{\gamma}\right)^{\frac{1}{\alpha}} e^{-\frac{\eta}{2\alpha}(t+t^*)}\\
		&= h^*(t+t^*) - \left(\frac{\beta}{\gamma}\right)^{\frac{1}{\alpha}} e^{-\frac{\eta}{2\alpha}(t+t^*)},
	\end{align*}
	which yields
	\begin{equation*}
		\xi(t+t^*, \beta)\geq h^*(t+t^*)- \left(\frac{\beta}{\gamma}\right)^{\frac{1}{\alpha}} e^{-\frac{\eta}{2\alpha}(t+t^*)}.
	\end{equation*}
	The proof is completed.
\end{stepping}
\end{proof}
We are now in the position to prove Theorem \ref{thm:discont}
\begin{proof}[Proof of Theorem \ref{thm:discont}]
	The first part, equation \ref{eq:discont}, has been shown in Proposition \ref{prop:growth}, while the second part (equation \eqref{eq:levelset}) has been shown in Proposition \ref{prop:levelset}.
\end{proof}

We conclude this section by the proof of Proposition \ref{prop:jump}.
\begin{proof}[Proof of Proposition \ref{prop:jump}]
	Since $x\mapsto u(t,x)$ is nonincreasing, we have $ u(t,x)\geq u(t, h^*(t))$ for each $x\leq h^*(t)$. Hence $(\rho\star u)(t, h^*(t))\geq \frac{1}{2}u(t, h^*(t))$ and 
	\begin{equation*}
		\frac{\dd}{\dd t}u(t, h^*(t))=u(t, h^*(t))\big(1+\hat\chi\,\rho\star u-(1+\hat\chi)u(t, h^*(t))\big)\geq u(t, h^*(t))\left(1-\left(1+\frac{\hat\chi}{2}\right)u(t, h^*(t))\right).
	\end{equation*}
	This yields
	\begin{equation*}
		u(t, h^*(t))\geq \frac{u_0(0)}{\left(1+\frac{\hat\chi}{2}\right)u_0(0)+e^{-t}\left(1-\left(1+\frac{\hat\chi}{2}\right)u_0(0)\right)}\underset{t\to+\infty}{\longrightarrow}\frac{1}{1+\frac{\hat\chi}{2}}=\frac{2}{2+\hat\chi}.
	\end{equation*}
	\eqref{eq:jumpsize} is shown. Next, we have $\frac{\dd \phantom{t}}{\dd t}h^*(t)=-(\rho_x\star u)(t, h^*(t))$ which gives 
	\begin{equation*}
		\frac{\dd}{\dd t}h^*(t)=\frac{\chi}{\sigma}\int_{0}^{\infty} \rho(y)u\big(t, h^*(t)-y\big)\dd y\geq u(t, h^*(t))\times \frac{\chi}{2\sigma}\underset{t\to+\infty}{\longrightarrow}\frac{\sigma\hat\chi}{2+\hat\chi}.
	\end{equation*}
	This proves \eqref{eq:charspeed-min} and  finishes the proof of Proposition \ref{prop:jump}.
\end{proof}

\section{Traveling wave solutions}
\label{sec:TW}

In this section we investigate the existence of particular solutions which consist in a fixed profile traveling at a constant speed $c$ (traveling waves). We are particularly interested in profiles which connect the stationary state $1$ near $-\infty$ to the stationary solution $0$ at a finite point of space,  say, for any $x\geq 0$.

\subsection{Existence of sharp traveling waves}

We study the traveling wave solutions of  equation \eqref{eq:main}:
\begin{equation*}
	\begin{cases}
		\partial _{t}u(t,x) -\chi\partial_x\bigl(u(t,x)\partial_x p(t,x)\bigr)=u(t,x)(1-u(t,x))\\[0.25cm]
		-\sigma^2\partial_x^2p(t,x)+p(t,x)=u(t,x)
	\end{cases}\;\;t>0,\;x\in \mathbb{R}.
\end{equation*}
Let us formally derive an equation for the traveling wave solutions to \eqref{eq:main}. We consider the traveling wave solution $ U(x-c\,t)=u(t,x) $. By using the resolvent formula of the second equation of \eqref{eq:main} formula we deduce that 
\begin{equation*}
	p(t,x)=\frac{1}{2\sigma} \int_{\R} e^{-\frac{|x-y|}{\sigma}}u(t,y)\dd y=\frac{1}{2\sigma} \int_{\R} e^{-\frac{|x-ct-l|}{\sigma}}U(l)\dd l= P(x-c\,t)
\end{equation*}
and the first equation in \eqref{eq:main} becomes
\begin{equation}
	-c \,U’(x-c\,t) -\chi\,\partial_x\Big(U(x-c\,t)\,\partial_x P(x-c\,t) \Big)=	U(x-c\,t)(1-U(x-c\,t)),\quad t>0,\;x\in \mathbb{R}.
	\label{?1.2}
\end{equation}
By developing the derivative in  \eqref{?1.2} we obtain  
 \begin{equation*}
	 \big(-c -\chi P'(x-c\,t) \big)  U'(x-c\,t) =	U(x-c\,t)(1+\hat\chi P(x-c\,t)-(1+\hat\chi)U(x-c\,t)),\quad t>0,\;x\in \mathbb{R},
 \end{equation*}
where $\hat\chi=\frac{\chi}{\sigma^2}$.
Therefore, by letting $ z=x-c\,t $, the traveling wave solutions of system \eqref{eq:main} satisfy the following equation 
\begin{equation}
	\begin{cases}
		(-c-\chi P'(z))U'(z)=U(z) \big(1+\hat\chi P(z)-(1+\hat\chi) U(z)\big),\\[0.25cm]
		-\sigma^2 P''(z)+P(z)=U(z).
	\end{cases}
	\label{?1.3}
\end{equation}
Let us finally remark that 
\begin{equation}\label{eq:P}
	P(z)=\frac{1}{2\sigma} \int_{\R} e^{-\frac{|y|}{\sigma}}U(z-y)\dd y =\frac{1}{2\sigma} \int_{\R} e^{-\frac{|z-y|}{\sigma}}U(y)\dd y.
\end{equation}
In particular if $ U $ is non-constant and  nonincreasing, then $ z \mapsto P(z) $ is strictly decreasing.

The goal of this Section is to show that equation \eqref{?1.3} can solved on the half-line $(-\infty, 0)$ which, as we will see later, will give a proof of Theorem \ref{thm:sharp-TW}. 
We begin by defining a set of admissible profiles, which is the set of function on which an appropriate fixed-point theorem will be used. The properties we impose are those who we suspect will be satisfied by the real profile of the traveling wave.
\begin{defn}\label{def:admissible}
	We say that the profile $ U:\mathbb{R} \to [0,1]$ is \textit{admissible} if
	\begin{enumerate}[(i)]
		\item $ U\in C((-\infty,0),\R)  $ and $ \lim_{z\to 0^-}U(z) $ exists and belongs to $ \left[\dfrac{2}{2+\hat\chi}, 1\right] $;
		\item $ 0\leq U(z) \leq 1 $ for any $ z\in \R $;
		\item the map $ z\mapsto U(z) $ is non-increasing on $ \R $;		
		\item $ U(z)\equiv 0 $ for any $ z\geq 0 $.
	\end{enumerate}
	We denote $\mathcal A$ the set of all admissible functions. 
\end{defn}

\begin{lem}\label{LEM:1}
	Let Assumption \ref{as:hatchi} hold and suppose that $ U $ is admissible (as in  Definition \ref{def:admissible}). Then the function $P$ defined by  $ P =(\rho \star U) $ satisfies 
	\[ P'(0)<P'(z) \leq 0, \, \text{ for all } z\in \R \backslash\{0\}. \]
	Moreover, this estimate is locally uniform in $U$ on $(-\infty, 0)$ in the sense that for each $L>1$ there is $\epsilon>0$ {\em independent of $U\in \mathcal A$} such that 
	\begin{equation*}
		P'(z)-P'(0)\geq \epsilon>0, \text{ for all } z\in\left[-L, -\frac{1}{L}\right].
	\end{equation*}
\end{lem}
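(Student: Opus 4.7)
My plan is to compute $P'(z) = (\rho_x \star U)(z)$ directly, where $\rho_x(x) = -\frac{\text{sign}(x)}{2\sigma^2}e^{-|x|/\sigma}$. Since $\rho_x \in L^1(\mathbb{R})$ and $\|U\|_\infty \leq 1$, the convolution $P'$ is continuous on $\mathbb{R}$, so the value $P'(0)$ is well defined. The bound $P'(z) \leq 0$ is immediate from the monotonicity of $U$ and $\rho \geq 0$: for $z_1 < z_2$, a change of variable in $P = \rho \star U$ gives $P(z_1) \geq P(z_2)$, so $P$ is nonincreasing, hence $P' \leq 0$.

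For the strict inequality $P'(z) > P'(0)$ when $z \neq 0$, I would split into two cases. When $z > 0$, using $U(y) = 0$ for $y \geq 0$, a direct computation yields the exact formula $P'(z) = e^{-z/\sigma}P'(0)$. Since $U$ is continuous on $(-\infty, 0)$ with $U(0^-) \geq \frac{2}{2+\hat\chi} > 0$, we have $P'(0) = -\frac{1}{2\sigma^2}\int_{-\infty}^0 e^{y/\sigma}U(y)\,\dd y < 0$, and the factor $e^{-z/\sigma} < 1$ gives $P'(z) > P'(0)$.

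When $z < 0$, I would decompose
\[ P'(z) - P'(0) = \int_{-\infty}^0 \bigl(\rho_x(z-y) - \rho_x(-y)\bigr) U(y)\,\dd y \]
by splitting at $y = z$. On $(-\infty, z)$ the bracket equals $\frac{1}{2\sigma^2}e^{y/\sigma}(1 - e^{-z/\sigma}) < 0$, while on $(z, 0)$ it equals $\frac{1}{2\sigma^2}(e^{(z-y)/\sigma} + e^{y/\sigma}) > 0$. Bounding the negative part with the crude estimate $U \leq 1$ and the positive part with the monotonicity bound $U(y) \geq U(0^-)$ for $y \in (z, 0)$, and integrating the explicit exponentials, one obtains the clean lower bound
\[ P'(z) - P'(0) \geq \frac{1}{2\sigma}\bigl(1 - e^{z/\sigma}\bigr)\bigl(2 U(0^-) - 1\bigr). \]
Assumption \ref{as:hatchi} together with the subsequent remark ($\hat\chi < \bar\chi \approx 1.045 < 2$) gives $2 U(0^-) - 1 \geq \frac{2-\hat\chi}{2+\hat\chi} > 0$ uniformly in $U \in \mathcal A$. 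The local uniformity on $[-L, -1/L]$ then follows by taking $\epsilon := \frac{1}{2\sigma}(1 - e^{-1/(L\sigma)})\frac{2-\hat\chi}{2+\hat\chi} > 0$, which depends only on $L$, $\sigma$, and $\hat\chi$.

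The main obstacle I anticipate is the clean algebraic cancellation producing the factor $2U(0^-) - 1$. It is precisely the balance between the trivial upper bound $U \leq 1$ on $(-\infty, z)$ and the monotonicity lower bound $U \geq U(0^-)$ on $(z, 0)$ that yields this factor, and this is where Assumption \ref{as:hatchi} (which forces $U(0^-) > 1/2$) becomes both natural and essential; without the bound $\hat\chi < 2$ the negative contribution coming from $(-\infty, z)$ could a priori overwhelm the positive contribution from $(z, 0)$.
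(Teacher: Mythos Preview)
Your argument is correct, and in fact it is both shorter and stronger than the paper's own proof. The paper treats the case $z<0$ by subdividing into three overlapping subintervals (Steps 2--4): near the origin it shows $P''(z)<0$; far to the left it uses a direct three-term estimate analogous to yours but without first combining the $P'(0)$ integral with the others; and for $1\leq\hat\chi<\bar\chi$ there remains a gap between these two regions which the paper fills by a further estimate on $P''$, and it is precisely this gap-filling step that forces the full strength of Assumption \ref{as:hatchi} with the specific threshold $\bar\chi\approx 1.045$.

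The key improvement in your approach is that you first split the integral $\int_{-\infty}^0 e^{y/\sigma}U(y)\,\dd y$ coming from $P'(0)$ at the point $y=z$ and combine it with the two pieces of $P'(z)$ \emph{before} bounding $U$. This produces the single signed kernel $e^{y/\sigma}(1-e^{-z/\sigma})$ on $(-\infty,z)$ and $e^{(z-y)/\sigma}+e^{y/\sigma}$ on $(z,0)$, and after applying $U\leq 1$ on the negative piece and $U\geq U(0^-)\geq \frac{2}{2+\hat\chi}$ on the positive piece you obtain the clean factorisation
\[
P'(z)-P'(0)\;\geq\;\frac{1}{2\sigma}\bigl(1-e^{z/\sigma}\bigr)\bigl(2U(0^-)-1\bigr),
\]
valid for every $z<0$. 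This is strictly sharper than the paper's Step~3 bound (the difference is $\frac{\hat\chi\, e^{z/\sigma}}{2\sigma(2+\hat\chi)}>0$), and consequently your argument needs only $\hat\chi<2$ rather than the more restrictive $\hat\chi<\bar\chi$. Thus your proof establishes the lemma under a strictly weaker hypothesis, with a single estimate replacing the paper's three-region decomposition.
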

\begin{proof}
	We divide the proof in five steps.\medskip\\
	\noindent\textbf{Step 1.} We prove $ P'(0)<P'(z) $ for any $ z>0 $. Notice that, for $z>0$, we have
	\[ P(z) =\frac{1}{2\sigma} \int_{-\infty}^{z} e^{-\frac{z-y}{\sigma}}U(y)\dd y + \frac{1}{2\sigma}\int_{z}^{\infty} e^{\frac{z-y}{\sigma}}U(y)\dd y=\frac{1}{2\sigma} e^{-\frac{z}{\sigma}} \int_{-\infty}^{0} e^{\frac{y}{\sigma}}U(y)\dd y. 
	\]
	Thus, taking derivative gives
$$
	P'(z)=-\frac{1}{\sigma}e^{-\frac{z}{\sigma}} \frac{1}{2\sigma} \int_{-\infty}^{0} e^{y}U(y)\dd y=e^{-\frac{z}{\sigma}}P'(0),
$$	
and since $U$ is strictly positive for negative values of $z$, we deduce that $ P'(0)<P'(z) $ for any $ z>0 $.\medskip

	\noindent\textbf{Step 2.} We prove that $ P'(0)<P'(z) $ for any $ -\sigma\ln(\frac{\hat\chi}{2})< z<0 $.	In fact, we prove the stronger result
	\[ P''(z)<0 \text{ if }  \sigma\ln\left(\frac{\hat\chi}{2}\right)< z<0. \]
	For any $ z<0 $, we have 
	\begin{align*}
		P''(z)&=\frac{1}{2\sigma^3} \int_{-\infty}^{z} e^{-\frac{z-y}{\sigma}}U(y)\dd y + \frac{1}{2\sigma^3}\int_{z}^{\infty} e^{\frac{z-y}{\sigma}}U(y)\dd y -\frac{1}{\sigma^2}U(z)\\
		&= \frac{1}{2\sigma^3} \int_{-\infty}^{z} e^{-\frac{z-y}{\sigma}}U(y)\dd y + \frac{1}{2\sigma^3}\int_{z}^{0} e^{\frac{z-y}{\sigma}}U(y)\dd y -\frac{1}{\sigma^2}U(z).
	\end{align*}
	Due to the assumption $ U\leq 1 $ and the fact that $U$ is decreasing we have
	\begin{align*}
		\sigma^2 P''(z)&\leq  \frac{1}{2\sigma } \int_{-\infty}^{z} e^{-\frac{z-y}{\sigma}}\dd y + \frac{1}{2\sigma}\int_{z}^{0} e^{\frac{z-y}{\sigma}}U(y)\dd y -U(z)\\
		&= \frac{1}{2} + \frac{1}{2\sigma}\int_{z}^{0} e^{\frac{z-y}{\sigma}}U(y)\dd y -U(z) \leq \frac{1}{2}+\frac{1}{2\sigma}\int_{z}^0e^{\frac{z-y}{\sigma}}\dd yU(z)-U(z)\\
		&= \frac{1}{2} -\frac{1}{2}\left(1+e^{\frac{z}{\sigma}}\right)U(z)\leq \frac{1}{2}\frac{2+\hat\chi-2(1+e^{\frac{z}{\sigma}})}{2+\hat\chi}=\frac{\hat\chi-2e^{\frac{z}{\sigma}}}{2(2+\hat\chi)}<0, 
	\end{align*}
	provided $z\in\left(\sigma\ln(\hat\chi/2), 0\right)$.
	In particular
	\begin{equation}\label{eq:4.5}
		P'(z)-P'(0)=-\int_{z}^0P''(y)\dd y\geq \frac{1}{\sigma(2+\hat\chi)}\left(\frac{\hat\chi}{2\sigma}z +1-e^{\frac{z}{\sigma}}\right)>0.
	\end{equation}
	
	\medskip

	\noindent\textbf{Step 3.} 
	We prove that $ P'(0)<P'(z) $ for any $  z<\sigma\ln\left( 1-\frac{\hat\chi}{2}\right) $. For any $ z< 0 $, we have
	\[ \sigma P'(z)=-\frac{1}{2\sigma} \int_{-\infty}^{z}e^{-\frac{z-y}{\sigma}}U(y)\dd y +\frac{1}{2\sigma}\int_{z}^0 e^{\frac{z-y}{\sigma}}U(y)\dd y, \quad \sigma P'(0)=-\frac{1}{2\sigma}\int_{-\infty}^{0}e^{\frac{y}{\sigma}}U(y)\dd y, \]
	and
	\[ \sigma\big(P'(z)-P'(0)\big)=\frac{1}{2\sigma}\int_{-\infty}^{0}e^{\frac{y}{\sigma}}U(y)\dd y-\frac{1}{2\sigma} \int_{-\infty}^{z}e^{-\frac{z-y}{\sigma}}U(y)\dd y +\frac{1}{2\sigma}\int_{z}^0 e^{\frac{z-y}{\sigma}}U(y)\dd y. \]
	Since for any $ z\leq0 $, $ \frac{2}{2+\hat\chi}\leq U(z) \leq 1$, we have the following estimate
	\begin{align}
		\sigma\big(P'(z)-P'(0)\big) &\geq  \frac{1}{2\sigma}\int_{-\infty}^{0}e^{\frac{y}{\sigma}}\times\frac{2}{2+\hat\chi} \dd y -\frac{1}{2\sigma} \int_{-\infty}^{z}e^{-\frac{z-y}{\sigma}}\dd y +\frac{1}{2\sigma}\int_{z}^0 e^{\frac{z-y}{\sigma}}\frac{2}{2+\hat\chi}\dd y\nonumber \\
		&= \frac{1}{2+\hat\chi} - \frac{1}{2} +\frac{1}{2+\hat\chi}\left(1-e^{\frac{z}{\sigma}}\right)\nonumber \\
		&=\frac{1}{2+\hat\chi}\left(2-e^{\frac{z}{\sigma}}-\frac{1}{2}(2+\hat\chi)\right)=\frac{1}{2+\hat\chi}\left(1-\frac{\hat\chi}{2}-e^{\frac{z}{\sigma}}\right).\label{eq:4.7}
	\end{align}
	By our assumption $ z<\sigma\ln\left( 1-\frac{\hat\chi}{2}\right) $, we deduce that $ P'(z)-P'(0)>0 $.
	
	Notice that, if $\hat\chi<1$, we have $\sigma\ln\left(\frac{\hat\chi}{2}\right)< \sigma\ln\left(1-\frac{\hat\chi}{2}\right)$ and the estimate is done. If $1\leq \hat\chi< 2$ we still need to fill a gap between the two bounds. \medskip

	\noindent\textbf{Step 4.} We assume that $\hat\chi\geq 1$ and we prove that 
	\begin{equation}\label{eq:4.6}
		P'(z)-P'(0)\geq -\int_{z}^0P''(y)\dd y\geq \frac{z}{2\sigma^2}-\frac{1}{2\sigma}\ln\left(\frac{\hat\chi}{2}\right)+\frac{1}{\sigma(2+\hat\chi)}\left(\frac{\hat\chi}{2}\ln\left(\frac{\hat\chi}{2}\right)+1-\frac{\hat\chi}{2}\right)>0 
	\end{equation}
	for any $z\in\left[\sigma\ln\left(\frac{\hat\chi}{2}\right)-\frac{\sigma}{2+\hat\chi}\left(\frac{\hat\chi}{2}\ln\left(\frac{\hat\chi}{2}\right)+1-\frac{\hat\chi}{2}\right), \sigma\ln\left(\frac{\hat\chi}{2}\right)\right]$. Notice that
	\begin{equation*}
		\frac{\hat\chi}{2}\ln\left(\frac{\hat\chi}{2}\right)+1-\frac{\hat\chi}{2}> 0,
	\end{equation*}
	because $x\mapsto x\ln(x)$ is strictly convex.

	By Step 2 we have for all $z\leq 0$:
	\begin{equation*}
		P''(z)\leq \frac{1}{2\sigma^2},
	\end{equation*}
	therefore if $z\in\left[\sigma\ln\left(\frac{\hat\chi}{2}\right)-\frac{\sigma}{2+\hat\chi}\left(\frac{\hat\chi}{2}\ln\left(\frac{\hat\chi}{2}\right)+1-\frac{\hat\chi}{2}\right), \sigma\ln\left(\frac{\hat\chi}{2}\right)\right]$ we have
	\begin{align*}
		P'(z)-P'(0)&=P'(z)-P'\left(\sigma \ln\left(\frac{\hat\chi}{2}\right)\right)+P'\left(\sigma \ln\left(\frac{\hat\chi}{2}\right)\right)-P'(0) \\
		&\geq -\int_z^{\sigma\ln\left(\frac{\hat\chi}{2}\right)}P''(y)\dd y +\frac{1}{\sigma(2+\hat\chi)}\left(\frac{\hat\chi}{2\sigma}\sigma\ln\left(\frac{\hat\chi}{2}\right)+1-\frac{\hat\chi}{2}\right) \\
		&\geq -\frac{1}{2\sigma^2}\left(\sigma\ln\left(\frac{\hat\chi}{2}\right)-z\right)+\frac{1}{\sigma(2+\hat\chi)}\left(\frac{\hat\chi}{2}\ln\left(\frac{\hat\chi}{2}\right)+1-\frac{\hat\chi}{2}\right)\\ 
		&\geq \frac{z}{2\sigma^2}-\frac{\ln\left(\frac{\hat\chi}{2}\right)}{2\sigma}+\frac{1}{\sigma(2+\hat\chi)}\left(\frac{\hat\chi}{2}\ln\left(\frac{\hat\chi}{2}\right)+1-\frac{\hat\chi}{2}\right)>0.
	\end{align*}
	We have proved the desired estimate.

	\medskip

	\noindent\textbf{Step 5.} We show the local uniformity. If $\hat\chi<1$ the local uniformity follows from Step 2 and Step 3 because $1-\frac{\hat\chi}{2}<\frac{\hat\chi}{2}$. If $1\leq\hat\chi<2$, then 
	\begin{equation}\label{eq:asonhatchi}
		\ln\left(\frac{\hat\chi}{2}\right)-\frac{2}{2+\hat\chi}\left(\frac{\hat\chi}{2}\ln\left(\frac{\hat\chi}{2}\right) +1-\frac{\hat\chi}{2}\right)<\ln\left(1-\frac{\hat\chi}{2}\right),
	\end{equation}
	because of  Assumption \ref{as:hatchi} and Lemma \ref{lem:B1} (notice that \eqref{eq:asonhatchi} is equivalent to $f(\hat\chi)<0$, where $f$ is as defined in Lemma \ref{lem:B1}). 
	By the estimates \eqref{eq:4.5}, \eqref{eq:4.7} and  \eqref{eq:4.6} from Step 2, Step 3 and Step 4, we find that $P'(z)-P'(0)>0$ on every compact subset of  $(-\infty, 0)$ and is bounded from below by a constant independent of $U$. This finishes the proof of Lemma \ref{LEM:1}.
\end{proof}
Before resuming to the proof, let us define the mapping $\mathcal T$ to which we want to apply a fixed-point theorem. Fix $U\in\mathcal A$, we define $\mathcal T(U)$ as 
\begin{equation}\label{eq:defT1}
	\mathcal T(U)(z):=\mathcal U(\tau^{-1}(z)) \text{  for all } z<0
\end{equation}
and $\mathcal T(U)(z)\equiv 0$ for all $z\geq 0$, where $ \tau:\mathbb{R} \mapsto (-\infty,0) $ is the solution of the following scalar ordinary differential equation
\begin{equation}\label{eq:tau}
\begin{cases}
	\tau'(t) &= \chi\big(P'(0) -P'(\tau (t))\big),\\
\tau(0) &= -1,
\end{cases}
\end{equation} 
and 
\[ 
\mathcal{U}(t) = \bigg[(1+\hat\chi)\int_{-\infty}^{t} \exp\left({-\int_{l}^{t}1+\hat\chi P(\tau(s))\dd s}\right) \dd l \bigg]^{-1}, \forall t \in \mathbb{R}.
\]

\begin{lem}[Stability of $\mathcal A$]\label{LEM:2}
	Let Assumption \ref{as:hatchi} be satisfied, let $ U $ be admissible in the sense of Definition \ref{def:admissible} and $\mathcal T$ be the map defined by \eqref{eq:defT1}.
	Then the image of $U$ by $\mathcal T$ has the following properties: 
	\begin{enumerate}[label={\rm(\roman*)}]
		\item\label{item:LEM2i}  $ \dfrac{2}{2+\hat\chi}\leq \mathcal{T} (U)(z)\leq 1$ for all $z\leq 0 $;
		\item\label{item:LEM2ii} $ \mathcal{T} (U) $ is strictly decreasing on $ (-\infty,0] $;
		\item\label{item:LEM2iii}$ \mathcal{T} (U)\in C^1((-\infty,0),\R) $ and $\mathcal T(U)(0^-)= \lim_{z\to 0^-} \mathcal{T} (U)(z) =\dfrac{1+\hat\chi P(0)}{1+\hat\chi} $.
	\end{enumerate}
In particular, $\mathcal A $ is left stable by $\mathcal T$ 
$$
\mathcal T\left( \mathcal A \right)  \subset \mathcal A.
$$ 
\end{lem}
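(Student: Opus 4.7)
The plan is to exploit the structure of \eqref{eq:tau} and a logistic-type ODE implicitly satisfied by $\mathcal U$, together with the admissibility bounds on $U$. By Lemma \ref{LEM:1}, $P'(0)<P'(z)$ for every $z\neq 0$, so the right-hand side of \eqref{eq:tau} is strictly negative whenever $\tau\neq 0$; starting from $\tau(0)=-1$, the solution is thus strictly decreasing, globally defined on $\mathbb R$ (since $P'$ is bounded), with $\tau(t)\to -\infty$ as $t\to +\infty$ and $\tau(t)\to 0^-$ as $t\to -\infty$ (if $\tau(-\infty)<0$, then $\tau'$ would stay uniformly bounded away from $0$, contradicting the boundedness of $\tau$). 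Hence $\tau:\mathbb R\to (-\infty,0)$ is a $C^1$ diffeomorphism and $\tau^{-1}$ is $C^1$ and strictly decreasing on $(-\infty,0)$.

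Next I recast $\mathcal U$ as the solution of an ODE. Setting $I(t):=\int_{-\infty}^t \exp\bigl(-\int_l^t[1+\hat\chi P(\tau(s))]\dd s\bigr)\dd l$, direct differentiation gives $I'(t)=1-(1+\hat\chi P(\tau(t)))I(t)$, so that $\mathcal U=1/[(1+\hat\chi)I]$ satisfies
\begin{equation*}
\mathcal U'(t)=\mathcal U(t)\bigl(1+\hat\chi P(\tau(t))-(1+\hat\chi)\mathcal U(t)\bigr),\qquad t\in\mathbb R,
\end{equation*}
with instantaneous equilibrium $f(t):=\frac{1+\hat\chi P(\tau(t))}{1+\hat\chi}$. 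The change of variable $r=t-l$ rewrites $I(t)=\int_0^{+\infty}\exp\bigl(-r-\hat\chi\int_{t-r}^tP(\tau(s))\dd s\bigr)\dd r$; dominated convergence with majorant $e^{-r}$ (using $P\geq 0$), combined with $\tau(s)\to 0$ as $s\to -\infty$, yields $I(t)\to 1/(1+\hat\chi P(0))$ and hence $\mathcal U(-\infty)=(1+\hat\chi P(0))/(1+\hat\chi)$, which will be the boundary value in item \ref{item:LEM2iii}.

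Item \ref{item:LEM2ii} reduces to the pointwise inequality $\mathcal U<f$. An estimate similar to the one in Lemma \ref{LEM:1} shows that $P$ is strictly decreasing on $(-\infty,0]$, so $s\mapsto P(\tau(s))$ is strictly increasing. Hence $\int_{t-r}^t P(\tau(s))\dd s<r\,P(\tau(t))$ for every $r>0$, and the integral formula for $I$ forces $I(t)>1/(1+\hat\chi P(\tau(t)))$, i.e.\ $\mathcal U(t)<f(t)$ and $\mathcal U'(t)>0$. Since $\tau^{-1}$ is strictly decreasing, $\mathcal T(U)=\mathcal U\circ \tau^{-1}$ is strictly decreasing on $(-\infty,0)$; the decrease extends across $z=0$ because $\mathcal T(U)(0)=0$ while $\mathcal T(U)(z)\geq \frac{2}{2+\hat\chi}$ for $z<0$ by the lower bound established below. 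Concerning item \ref{item:LEM2i}, admissibility gives $\frac{2}{2+\hat\chi}\leq U\leq 1$ on $(-\infty,0)$, so $P(0)=\frac{1}{2\sigma}\int_{-\infty}^0 e^{y/\sigma}U(y)\dd y\geq \frac{1}{2+\hat\chi}$ and $\mathcal U(-\infty)\geq \frac{2}{2+\hat\chi}$; by the monotonicity $\mathcal U'>0$ this lower bound propagates to every $t$, while the upper bound $\mathcal U\leq f\leq 1$ follows from $P\leq 1$. Item \ref{item:LEM2iii} is then a consequence of the $C^1$ regularity of $\tau^{-1}$ (since $\tau'$ does not vanish) and of $\mathcal U$ (solution of a smooth ODE), and the stability $\mathcal T(\mathcal A)\subset \mathcal A$ follows by combining (i)--(iii) with $\mathcal T(U)\equiv 0$ on $[0,+\infty)$.

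The main technical subtlety will lie in the strict comparison $\mathcal U<f$: a purely differential argument based on $V:=f-\mathcal U$, which satisfies $V'+(1+\hat\chi)\mathcal U\,V=f'$ with $V(-\infty)=0$, risks only yielding the weak inequality $V\geq 0$. The integral representation of $I$ circumvents this by capturing the strict comparison in one line, directly from the strict monotonicity of $s\mapsto P(\tau(s))$.
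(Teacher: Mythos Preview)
Your proof is correct and follows essentially the same route as the paper's: both rely on Lemma \ref{LEM:1} to make $\tau$ a strictly decreasing diffeomorphism onto $(-\infty,0)$, and both exploit the strict monotonicity of $s\mapsto P(\tau(s))$ in the integral representation of $\mathcal U$ to get $\mathcal U'>0$. The only differences are organizational: you compute $\mathcal U(-\infty)$ by dominated convergence (the paper uses L'H\^opital) and then obtain the lower bound in (i) from monotonicity and this limit, whereas the paper bounds $\mathcal U$ directly by inserting $\frac{1}{2+\hat\chi}\leq P(\tau)\leq 1$ into the integral formula.
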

\begin{proof}
	We divide the proof in three steps.\medskip\\
	\noindent\textbf{Step 1.} We prove that $ \dfrac{2}{2+\hat\chi}\leq \mathcal{T} (U)(z)\leq 1 $ for all $z<0$. For any $z\in\mathbb R$ we have 
	\begin{align*}
		P(z) &= \int_{-\infty}^{\infty} \rho(y)U(z-y) \dd y \leq \int_{-\infty}^{+\infty}\rho(y)\dd y=1, \\
		P(z) &= \int_{-\infty}^{\infty} \rho(y)U(z-y) \dd y\geq 0.
	\end{align*}
	Since $ \dfrac{2}{2+\hat\chi}\leq U(z) \leq 1$ for all $z<0$, we have for $z<0$ 
	\[ P(z) \geq  \frac{1}{2\sigma}\int_{z}^{+\infty} \exp\left(-\frac{|y|}{\sigma}\right) \times\frac{2}{2+\hat\chi} \dd y =\frac{2}{2+\hat\chi}\left(1-\dfrac{e^{\frac{z}{\sigma}}}{2}\right) \geq\frac{1}{2+\hat\chi}.  \]
	Thus, for any $ z\leq0 $, we have  $\dfrac{1}{2+\hat\chi}\leq P(z)\leq 1 $.
	Since $ \tau(t)  $ is the solution of 
	\begin{equation*}
	\begin{cases}
		\tau'(t) &= \chi\big(P'(0) -P'(\tau (t))\big)\\
		\tau(0) &= -1, 
	\end{cases}
	\end{equation*}
	and due to Lemma \ref{LEM:1}, $ t\to \tau (t) $ is  strictly decreasing, continuous and 
	\[ \lim_{t\to-\infty}\tau (t) = 0,\quad \lim_{t\to+\infty} \tau (t) =-\infty. \]
	Therefore, 
	\[ \frac{1}{2+\hat\chi}\leq P(\tau(t))\leq 1, \quad t\in \R.\]
	Since by definition  $ \mathcal{U}(t) =  \bigg[(1+\hat\chi)\int_{-\infty}^{t} e^{-\int_{l}^{t}1+\hat\chi P(\tau(s))\dd s} \dd l \bigg]^{-1} $, $ \mathcal{U} $ is monotone with respect to $ P$, and we compute  on the one hand
	\begin{align*}
		\mathcal{U}(t)&\leq \bigg[(1+\hat\chi)\int_{-\infty}^{t} e^{-\int_{l}^{t}1+\hat\chi\dd s} \dd l \bigg]^{-1}\\
		&=\bigg[(1+\hat\chi)\int_{-\infty}^{t} e^{-(1+\hat\chi)(t-l)} \dd l \bigg]^{-1} =1.
	\end{align*}
	On the other hand, we can see that
	\begin{align*}
		\mathcal{U}(t)&\geq \bigg[(1+\hat\chi)\int_{-\infty}^{t} \exp\left(-\int_{l}^{t}1+\frac{\hat\chi}{2+\hat\chi}\dd s\right) \dd l \bigg]^{-1}\\
		&=\bigg[(1+\hat\chi)\int_{-\infty}^{t} \exp\left(-\left(1+\frac{\hat\chi}{2+\hat\chi}\right)(t-l)\right) \dd l \bigg]^{-1}=\frac{2}{2+\hat\chi}.
	\end{align*}
	This implies $ \frac{2}{2+\hat\chi}\leq \mathcal{U}(t)  \leq 1,\,\forall t\in \R $. Since  $ \tau^{-1}$ maps $(-\infty,0) $ to $ \R $, for any $ z<0 $ we have indeed
	\[ \frac{2}{2+\hat\chi}\leq\mathcal{T}(U)(z) = \mathcal{U}(\tau^{-1}(z)) \leq 1.\]
	Item \ref{item:LEM2i} is proved.\medskip

	\noindent\textbf{Step 2.} We prove that $ z \mapsto \mathcal{T}(U)(z) $ is strictly decreasing on $ (-\infty,0) $. First, we prove that  $ t\mapsto \mathcal{U}(t) $ is strictly increasing. Indeed $\mathcal U$ is differentiable and we have 
	\begin{align}\label{eq:calUprime}
		\mathcal{U}'(t) &=  \frac{-1}{1+\hat\chi}\times\dfrac{\displaystyle 1+\int_{-\infty}^{t} -\big(1+\hat\chi P(\tau(t))\big) e^{-\int_{l}^{t}1+\hat\chi P(\tau(s))\dd s} \dd l }{\bigg[\displaystyle\int_{-\infty}^{t} \exp\left(-\int_{l}^{t}1+P(\tau(s))\dd s\right) \dd l \bigg]^{2}}
	\end{align}
	Moreover, for any $ l<t $,  we have $ \tau(t) <\tau(l) $. Since $ P $ is strictly decreasing, $ P(\tau(l))< P(\tau(t))  $. We deduce 
	\begin{align*}
	\int_{-\infty}^{t} e^{-\int_{l}^{t}1+\hat\chi P(\tau(s))\dd s}\big(1+\hat\chi P(\tau(t))\big) \dd l 
	&> \int_{-\infty}^{t} e^{-\int_{l}^{t}1+\hat\chi P(\tau(s))\dd s}\big(1+\hat\chi P(\tau(l))\big) \dd l \\
	&= \int_{-\infty}^{t} \frac{\dd}{\dd l} \Big( e^{-\int_{l}^{t}1+\hat\chi P(\tau(s))\dd s}\Big) =1.  
	\end{align*}
	This implies $ \mathcal{U}'(t)>0 $ and $ t\mapsto \mathcal{U}(t) $ is strictly increasing.  Note that the inverse map $ z\mapsto \tau^{-1}(z) $ is strictly decreasing, therefore the composition of two mappings
	\[ z\longmapsto\mathcal{T}(U)(z) = \mathcal{U}(\tau^{-1}(z))  \] 
	is also strictly decreasing on $ (-\infty,0) $. Item \ref{item:LEM2ii} is proved. 
	\medskip

	\noindent\textbf{Step 3.} We prove that $ \mathcal{T} (U)\in C^1((-\infty,0),\R) $ and compute the limit of $\mathcal T(U)$ as $z\to 0^-$.

	Since for any $ z<0 $  
	\[ \sigma^2P''(z) = -U(z)+P(z) \in C((-\infty,0),\R), \]
	$ P $ belongs to $ C^2((-\infty,0),\R) $, which implies that $ t\mapsto \tau(t)  $ belongs to $ C^1(\R,(-\infty,0)) $.
	By \eqref{eq:calUprime},  the function $ t\mapsto \mathcal{U}'(t) $
	is continuous and the inverse map $ z\to \tau^{-1}(z) $ is also of class $ C^1 $ from $ (-\infty,0) $ to $ \R $. Thus, the function 
	\[ z\longmapsto \mathcal{T}(U)(z) = \mathcal{U}(\tau^{-1}(z)) \]
	is of class $ C^1 $ from $ (-\infty,0) $ to $ \R $. Moreover, the map $t\mapsto \mathcal{U}(t) $ is strictly decreasing and is bounded from below by $ \frac{2}{2+\hat\chi}>0 $, thus $ \lim_{t\to -\infty}\mathcal{U}(t) $ exists. In particular 
	\[ \mathcal{T}(U)(0^-) :=\lim_{z\to 0^-} \mathcal{U}(\tau^{-1}(z)) =\lim_{t\to-\infty} \mathcal{U}(t). \]
	{By the definition of $ \mathcal{U} $
	\begin{align*}
	\mathcal{T}(U)(0^-) &=\lim_{t\to -\infty} \mathcal{U}(t)\\
		&= \lim_{t\to - \infty} \bigg[(1+\hat\chi)\int_{-\infty}^{t} e^{-\int_{l}^{t}1+\hat\chi P(\tau(s))\dd s} \dd l \bigg]^{-1}\\
		&=\lim_{t\to - \infty}\frac{e^{\int_{0}^{t}1+\hat\chi P(\tau(s))\dd s}}{(1+\hat\chi)\int_{-\infty}^{t} e^{\int_{0}^{l}1+\hat\chi P(\tau(s))\dd s} \dd l}.
	\end{align*}
	By employing L'H\^opital rule
	\begin{align*}
		\mathcal{T}(U)(0^-)&=\lim_{t\to - \infty}\frac{e^{\int_{0}^{t}1+\hat\chi P(\tau(s))\dd s}}{(1+\hat\chi)\int_{-\infty}^{t} e^{\int_{0}^{l}1+\hat\chi P(\tau(s))\dd s} \dd l}\\
		&= \lim_{t\to - \infty} \frac{\big(1+\hat\chi P(\tau(t))\big) e^{\int_{0}^{t}1+P(\tau(s))\dd s}}{(1+\hat\chi) \,e^{\int_{0}^{t}1+P(\tau(s))\dd s}}\\
	&=\frac{1+\hat\chi P(0)}{1+\hat\chi}.
	\end{align*}
	}
	Therefore, $ \mathcal{T} (U)\in C^1((-\infty,0),\R) \cap  C((-\infty,0],\R) $ and $ \mathcal{T}(U)(0)=(1+\hat\chi P(0))/(1+\hat\chi) $. This proves Item \ref{item:LEM2iii} and concludes the proof of Lemma \ref{LEM:2}.
\end{proof}
Next we focus on the continuity of $\mathcal T$ for a particular topology. 
\begin{lem}[Continuity of $\mathcal T$]\label{lem:continuity}
Define the weighted norm 
	\begin{equation}\label{eq:norm-eta}
	\Vert U\Vert_\eta:=\sup_{z \in (-\infty,0)} \alpha(z)|U(z)|,
	\end{equation}
	where
	$$
	\alpha(z):=\sqrt{-z}\,e^{\eta z}\leq \frac{1}{\sqrt{2e\eta}}, \text{ for all } z \leq 0,
	$$
	with $0<\eta<\sigma^{-1}$.
	If Assumption \ref{as:hatchi} is satisfied, then the map $\mathcal T$ is continuous on $\mathcal A$ for the distance induced by $\Vert\cdot\Vert_\eta$.
\end{lem}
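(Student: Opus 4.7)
The plan is to exploit the vanishing of the weight $\alpha(z) = \sqrt{-z}\,e^{\eta z}$ at both endpoints $z \to 0^-$ and $z \to -\infty$, reducing the continuity of $\mathcal{T}$ to uniform convergence on compact subsets of $(-\infty, 0)$. Since $0 \leq \mathcal{T}(U) \leq 1$ for all $U \in \mathcal{A}$ by Lemma \ref{LEM:2}, for any $\varepsilon > 0$ one can choose $0 < \delta < L$ such that $\sup_{z \in (-\infty, -L) \cup (-\delta, 0)} \alpha(z) < \varepsilon$, so that it suffices to prove $\mathcal{T}(U_n) \to \mathcal{T}(U)$ uniformly on the compact interval $[-L, -\delta]$ for any sequence $U_n \to U$ in $\|\cdot\|_\eta$.

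First I would show that convergence in $\|\cdot\|_\eta$ implies that $P_n := \rho \star U_n \to P := \rho\star U$ and $P'_n \to P'$ uniformly on every compact subset of $(-\infty, 0)$. This follows by splitting the convolution $\int_{-\infty}^0 \rho(z-y)(U_n - U)(y)\, \mathrm{d}y$ into three pieces: a far tail in $y$ (controlled by $|U_n - U| \leq 2$ and the exponential decay of $\rho$), a small neighbourhood of $y = 0$ (controlled by the interval length together with $\|\rho\|_\infty \leq 1/(2\sigma)$), and a bulk region where $\alpha(y)$ is bounded below by some $\alpha_0 > 0$, so that $|U_n(y) - U(y)| \leq \|U_n - U\|_\eta / \alpha_0 \to 0$ uniformly.

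Next I would use standard continuous dependence for the ODE \eqref{eq:tau} to deduce that $\tau_n \to \tau$ uniformly on compact time intervals. The locally uniform lower bound $P'(z) - P'(0) \geq \epsilon_K > 0$ from Lemma \ref{LEM:1} guarantees, uniformly in $n$, that $\tau_n^{-1}$ maps $[-L, -\delta]$ into a compact time interval $[T_1, T_2]$ with $\tau_n^{-1} \to \tau^{-1}$ uniformly on $[-L, -\delta]$. Moreover, the linear lower bound of the form $P'(z) - P'(0) \geq c|z|$ near $z = 0$ coming from estimate \eqref{eq:4.5} yields an exponential lower bound on $|\tau_n(s)|$ for $s \to -\infty$, which ensures that $\tau_n$ stays in a uniform compact subset of $(-\infty, 0)$ on any bounded time interval.

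Finally, to obtain $\mathcal{U}_n \to \mathcal{U}$ uniformly on $[T_1, T_2]$, I would split the defining integral $\mathcal{U}_n(t)^{-1} = (1+\hat\chi)\int_{-\infty}^t \exp\left(-\int_l^t (1+\hat\chi P_n(\tau_n(s)))\,\mathrm{d}s\right)\mathrm{d}l$ at some $l = -T$: the tail is dominated by $e^{-(T + T_1)}$ thanks to the nonnegativity of $P_n$ and vanishes uniformly as $T \to \infty$, while the bulk $\int_{-T}^t$ converges uniformly by the uniform convergence of $P_n \circ \tau_n \to P \circ \tau$ on $[-T, T_2]$ (combining the two previous steps). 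Together with the uniform continuity of $\mathcal{U}$ on $[T_1, T_2]$ given by Lemma \ref{LEM:2}, this yields $\mathcal{T}(U_n) = \mathcal{U}_n \circ \tau_n^{-1} \to \mathcal{T}(U)$ uniformly on $[-L, -\delta]$, which together with the boundary control closes the argument. The hard part will be precisely the control of $\tau_n(s)$ as $s \to -\infty$: since $P'_n \to P'$ is only uniform on compact subsets of $(-\infty, 0)$, one must use the quantitative estimate \eqref{eq:4.5} to prevent $\tau_n(s)$ from approaching $0$ too quickly, which is exactly what makes the bulk integral uniformly controlled in $n$.
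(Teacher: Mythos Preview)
Your approach is essentially the same as the paper's: both exploit the vanishing of the weight $\alpha$ at the endpoints to reduce to a compact interval $[z_0,z_1]$, both truncate the integral defining $\mathcal U$ by a tail of size $e^{-L}$, and both control the remaining bulk through the convergence of $P$, $P'$ and the ODE flow $\tau$. The paper packages this via explicit quantitative bounds of the form $|P(z)-\tilde P(z)|\le C_P(z)\Vert U-\tilde U\Vert_\eta$ and a direct integral formula for $\tau^{-1}(z)-\tilde\tau^{-1}(z)$, while you use a sequential argument and ODE continuous dependence; these are stylistic variants of the same proof.

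One small correction: the ``exponential lower bound on $|\tau_n(s)|$'' you need (so that $\tau_n$ stays in a compact subset of $(-\infty,0)$ on bounded time intervals) does \emph{not} come from the lower bound \eqref{eq:4.5} on $P'(z)-P'(0)$. That lower bound gives $\tau_n'\le \chi c\,\tau_n$ and hence an \emph{upper} bound $|\tau_n(s)|\le e^{\chi c s}$ for $s<0$, which is what you actually use to trap $\tau_n^{-1}(-\delta)$ in a finite time interval. The lower bound $|\tau_n(s)|\ge e^{\hat\chi s}$ instead follows from the elementary estimate $|P_n''|=\sigma^{-2}|P_n-U_n|\le \sigma^{-2}$, which gives $|\tau_n'|\le \hat\chi|\tau_n|$. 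With this minor fix your outline goes through.
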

\begin{proof}
	Let $U\in \mathcal A$ and $\varepsilon>0$ be given. Let $\tilde U\in \mathcal A$ be given and define the corresponding pressure and rescaled variable $\tilde P:=\rho\star \tilde U$ and $\tilde\tau$ as the solution to \eqref{eq:tau} with $U$ replaced by $\tilde U$. We remark that :
	\begin{multline*}
		|\mathcal T(U)(z)-\mathcal T(\tilde U)(z)|=\\
		|\mathcal T(U)(z)\mathcal T(\tilde U)(z)|\left| \int_{-\infty}^{\tilde\tau^{-1}(z)}e^{-\int_l^{\tilde \tau^{-1}(z)}1+\hat\chi \tilde P(\tilde \tau(s)\dd s}\dd l-\int_{-\infty}^{ \tau^{-1}(z)}e^{-\int_l^{\tau^{-1}(z)}1+\hat\chi P(\tau(s)\dd s}\dd l\right|\\
		\leq \left| \int_{-\infty}^{\tilde\tau^{-1}(z)}e^{-\int_l^{\tilde \tau^{-1}(z)}1+\hat\chi \tilde P(\tilde \tau(s))\dd s}\dd l-\int_{-\infty}^{ \tau^{-1}(z)}e^{-\int_l^{\tau^{-1}(z)}1+\hat\chi P(\tau(s))\dd s}\dd l\right|,
	\end{multline*}
	by Lemma \ref{LEM:2}. Define $T_{-L}(U):=\int_{-L}^{ \tau^{-1}(z)}e^{-\int_l^{\tau^{-1}(z)}1+\hat\chi P(\tau(s))\dd s}\dd l$. We have $\mathcal T(U)=T_{-\infty}(U)$ and 
	\begin{align*}
		|T_{-\infty}(U)-T_{-\infty}(\tilde U)|&\leq \left| \int_{-\infty}^{\tilde\tau^{-1}(z)-L}e^{-\int_l^{\tilde \tau^{-1}(z)}1+\hat\chi \tilde P(\tilde \tau(s))\dd s}\dd l-\int_{-\infty}^{ \tau^{-1}(z)-L}e^{-\int_l^{\tau^{-1}(z)}1+\hat\chi P(\tau(s))\dd s}\dd l\right| \\
		&\quad +\left| \int_{\tilde\tau^{-1}(z)-L}^{\tilde\tau^{-1}(z)}e^{-\int_l^{\tilde \tau^{-1}(z)}1+\hat\chi \tilde P(\tilde \tau(s))\dd s}\dd l-\int_{\tau^{-1}(z)-L}^{ \tau^{-1}(z)}e^{-\int_l^{\tau^{-1}(z)}1+\hat\chi P(\tau(s))\dd s}\dd l\right| \\
		&\leq e^{-L}+e^{-L}+\left| \int_{\tilde\tau^{-1}(z)-L}^{\tilde\tau^{-1}(z)}e^{-\int_l^{\tilde \tau^{-1}(z)}1+\tilde \hat\chi P(\tilde \tau(s))\dd s}\dd l-\int_{\tau^{-1}(z)-L}^{ \tau^{-1}(z)}e^{-\int_l^{\tau^{-1}(z)}1+\hat\chi P(\tau(s))\dd s}\dd l\right|\\
		&\leq \frac{\varepsilon}{2}\sqrt{2\eta e}+\left| \int_{\tilde\tau^{-1}(z)-L}^{\tilde\tau^{-1}(z)}e^{-\int_l^{\tilde \tau^{-1}(z)}1+\hat\chi \tilde P(\tilde \tau(s))\dd s}\dd l-\int_{\tau^{-1}(z)-L}^{ \tau^{-1}(z)}e^{-\int_l^{\tau^{-1}(z)}1+\hat\chi P(\tau(s))\dd s}\dd l\right|\\
		&=\frac{\varepsilon}{2}\sqrt{2\eta e}+|T_{-L}(U)(z)-T_{-L}(\tilde U)(z)|,
	\end{align*}
	for $L:=-\ln\left(\frac{\varepsilon}{2}\sqrt{\frac{e\eta}{2}}\right)>0$. \medskip

	Let $z_0$ and $z_1$ be respectively the smallest and the biggest negative root of the equation 
	\begin{equation*}
		\eta z+\frac{1}{2}\ln(-z)=\ln\left(\frac{\varepsilon}{4}\right).
	\end{equation*}
	Then if $z\not\in[ z_0, z_1]$ we have $\sqrt{-z}e^{\eta z}\leq \frac{\varepsilon}{4}$ and, since $|T_{-L}(U)|\leq 1$  we have  
	\begin{align*}
		\sqrt{-z}e^{\eta z}|T_{-L}(U)(z)|&=\sqrt{-z}e^{\eta z}\left|\int_{\tau^{-1}(z)-L}^{ \tau^{-1}(z)}e^{-\int_l^{\tau^{-1}(z)}1+\hat\chi P(\tau(s))\dd s}\dd l\right|\\
		&\leq \frac{\varepsilon}{4}\int_{\tau^{-1}(z)-L}^{ \tau^{-1}(z)}e^{-\int_l^{\tau^{-1}(z)}1\dd s} \dd l=\frac{\varepsilon}{4} (1-e^{-L}) \leq \frac{\varepsilon}{4}.
	\end{align*}
	Similarly, we have
	\begin{equation*}	
		\sqrt{-z}e^{\eta z}|T_{-L}(\tilde U)(z)|\leq \frac{\varepsilon}{4}.
	\end{equation*}
	We have shown 
	\begin{equation*}
		\sup_{z\not\in[z_0, z_1]}\sqrt{-z}e^{\eta z}|\mathcal T(U)(z)-\mathcal T(\tilde U)(z)|\leq \varepsilon.
	\end{equation*}\medskip

	There remains to estimate $\sqrt{-z}e^{\eta z}|T_{-L}(U)(z)-T_{-L}(\tilde U)(z)|$ when $z\in[z_0, z_1]$. We have
	\begin{align*}
		|T_{-L}(U)(z)-T_{-L}(\tilde U)(z)|&=\left| \int_{\tilde\tau^{-1}(z)-L}^{\tilde\tau^{-1}(z)}e^{-\int_{l}^{\tilde \tau^{-1}(z)} 1+\hat\chi \tilde P(\tilde \tau(s))\dd s}   \dd l-\int_{\tau^{-1}(z)-L}^{ \tau^{-1}(z)}e^{-\int_{l}^{\tau^{-1}(z)} 1+\hat\chi P(\tau(s))\dd s}\dd l\right|\\
		&\leq 2|\tilde \tau^{-1}(z)-\tau^{-1}(z)|\\
		&\quad +\left|\int_{\tau^{-1}(z)-L}^{\tau^{-1}(z)}e^{-\int_{l}^{\tilde \tau^{-1}(z)} 1+\hat\chi \tilde P(\tilde \tau(s))\dd s}-e^{-\int_{l}^{\tau^{-1}(z)} 1+\hat\chi P(\tau(s))\dd s}\dd l\right|\\
		&\leq 2|\tilde \tau^{-1}(z)-\tau^{-1}(z)| \\ 
		&\quad + L\sup_{l\in(\tau^{-1}(z)-L, \tau^{-1}(z))}\left|e^{ \int_{l}^{\tau^{-1}(z)}1+\hat\chi P(\tau(s))\dd s-\int_{l}^{\tilde \tau^{-1}(z)}1+\hat\chi \tilde P(\tilde \tau(s))\dd s}-1\right|, 
	\end{align*}
	and we remark that 
	\begin{multline*}
		\left|\int_l^{\tau^{-1}(z)}1+\hat\chi P(\tau(s))\dd s-\int_l^{\tilde \tau^{-1}(z)}1+\hat\chi \tilde P(\tilde \tau(s))\dd s\right| \\
	\begin{aligned}
		\relax&\leq 2|\tau^{-1}(z)-\tilde \tau^{-1}(z)|+\hat\chi\left|\int_l^{\tau^{-1}(z)}P(\tau(s))-\tilde P(\tilde \tau(s))\dd s\right|\\
		&\leq 2|\tau^{-1}(z)-\tilde \tau^{-1}(z)|+\hat\chi L\sup_{s\in(\tau^{-1}(z)-L, \tau^{-1}(z))}|P(\tau(s))-P(\tilde \tau(s))| \\
		&\quad +\hat\chi L\sup_{s\in(\tau^{-1}(z)-L, \tau^{-1}(z))}|P(\tilde \tau(s))-\tilde P(\tilde \tau(s))| .
	\end{aligned}
	\end{multline*}
	To conclude the proof of the continuity of $\mathcal T$, we show that each of those three terms can be made arbitrarily small (uniformly on $[z_0, z_1]$) by choosing $\tilde U$ sufficiently close to $U$ in the $\Vert \cdot\Vert_{\eta}$ norm. 
	We start with  the second one. We have for all $z\leq 0$:
	\begin{align*}
		|P(z)-\tilde P(z)|&=\frac{1}{2\sigma}\left|\int_{-\infty}^0e^{-\frac{|z-y|}{\sigma}}(U(y)-\tilde U(y))\dd y\right|\\
		&\leq \frac{1}{2\sigma}\int_{-\infty}^ze^{\frac{y-z}{\sigma}}|U(y)-\tilde U(y)|\dd y+\frac{1}{2\sigma}\int_z^0e^{\frac{z-y}{\sigma}}|U(y)-\tilde U(y)|\dd y\\
		&\leq \frac{1}{2\sigma}\sqrt{\frac{2\eta}{e}}e^{-\frac{z}{\sigma}}\int_{-\infty}^z\frac{e^{(1-\sigma\eta)\frac{y}{\sigma}}}{\sqrt{-y}}\Vert U-\tilde U\Vert_{\eta}\dd y+\frac{1}{2}\sqrt{\frac{2\eta}{e}}e^{\frac{z}{\sigma}}\int_z^0 \frac{e^{-(1+\sigma\eta)\frac{y}{\sigma}}}{\sqrt{-y}}\Vert U-\tilde U\Vert_\eta\dd y\\
		&=\sigma^{-1}\sqrt{\frac{\eta}{2e}}\left[e^{-\frac{z}{\sigma}}\int_{-\infty}^{z}\frac{e^{(1-\sigma\eta)\frac{y}{\sigma}}}{\sqrt{-y}}\dd y+e^{\frac{z}{\sigma}}\int_{z}^0\frac{e^{-(1+\sigma\eta)\frac{y}{\sigma}}}{\sqrt{-y}}\dd y \right]\Vert U-\tilde U\Vert_\eta \\
		&=:C_P(z)\Vert U-\tilde U\Vert_\eta.
	\end{align*}
	A similar computation shows that, for all $z\leq 0$,
	\begin{equation*}
		|P'(z)-\tilde P'(z)|\leq\sigma^{-2}\sqrt{\frac{\eta}{2e}}\left[e^{-\frac{z}{\sigma}}\int_{-\infty}^{z}\frac{e^{(1-\sigma\eta)\frac{y}{\sigma}}}{\sqrt{-y}}\dd y+e^{\frac{z}{\sigma}}\int_{z}^0\frac{e^{-(1+\sigma\eta)\frac{y}{\sigma}}}{\sqrt{-y}}\dd y \right]\Vert U-\tilde U\Vert_\eta=\frac{1}{\sigma}C_P(z)\Vert U-\tilde U\Vert_\eta. 
	\end{equation*}
	In particular for $z=0$ we have 
	\begin{equation*}
		|P'(0)-\tilde P'(0)|\leq \sigma^{-2}\sqrt{\frac{\eta}{2e}}\int_{-\infty}^0\frac{e^{(1-\sigma\eta)\frac{y}{\sigma}}}{\sqrt{-y}}\dd y\Vert U-\tilde U\Vert_{\eta},
	\end{equation*}
	and therefore $P'(0)$ and $\tilde P'(0)$ can be chosen arbitrarily small. Next we show that $\tau(t)$ and $\tilde \tau(t)$ are uniformly close for $t\in[\tau^{-1}(z_0)-L, \tau^{-1}(z_1)]$. Indeed, we compute: 
	\begin{align*}
		|(\tau-\tilde \tau)(t)|&=\chi\left|\int_{0}^tP'(0)-P'(\tau(s))\dd s - \int_{0}^t\tilde P'(0)-\tilde P'(\tilde \tau(s))\dd s\right|\\
		&\leq \chi\left|t(P'(0)-\tilde P'(0))+\int_0^t\tilde P'(\tau(s))-P'(\tau(s))\dd s\right|+\chi\left|\int_0^t\tilde P'(\tilde \tau(s))-\tilde P'(\tau(s))\dd s\right|\\ 
		&\leq \chi t[C_P(0)+\max_{0\leq s\leq t}C_P(\tau(s))]\Vert U-\tilde U\Vert_\eta+\hat \chi\int_0^t|\tilde \tau(s)-\tau(s)|\dd s,
	\end{align*}
	where we have used the fact that $ \sigma^2|P''(z)|=|P(z)-U(z)|\leq 1$. By Grönwall's Lemma, we have therefore
	\begin{equation*}
		|\tau(t)-\tilde \tau(t)|\leq \chi t\big[C_P(0)+\max_{0\leq s\leq t}C_P(\tau(s))\big]\Vert U-\tilde U\Vert_\eta e^{\hat\chi t},
	\end{equation*}
	and we have shown that $\tau$ and $\tilde \tau $ can be made arbitrarily close  by choosing $\Vert U-\tilde U\Vert_\eta$ sufficiently small. This gives an arbitrary control on the term  
	\begin{equation*}
		\sup_{s\in(\tau^{-1}(z)-L, \tau^{-1}(z))}|P(\tau(s))-P(\tilde \tau(s))|\leq |P'(0)| |\tau(s)-\tilde \tau(s)|,
	\end{equation*}
	since $P'(0)< P'(z)\leq 0 $ by Lemma \ref{LEM:1}, and on the term 
	\begin{equation*}
		\sup_{s\in(\tau^{-1}(z)-L, \tau^{-1}(z))}|P(\tilde \tau(s))-\tilde P(\tilde \tau(s))|\leq  \left[\sup_{s\in(\tau^{-1}(z)-L, \tau^{-1}(z))}C_P(\tilde \tau(s))\right]\Vert U-\tilde U\Vert_\eta .
	\end{equation*}

	Finally, we estimate $\tau^{-1}(z)-\tilde \tau^{-1}(z)$ by the remark:
	\begin{align*}
		|\tau^{-1}(z)-\tilde \tau^{-1}(z)|&=\left|\int_{-1}^z\frac{1}{\tau'(\tau^{-1}(y))}\dd y-\int_{-1}^z\frac{1}{\tilde\tau'(\tilde\tau^{-1}(y))}\dd y \right| \\
		&= \frac{1}{\chi}\left|\int_{-1}^z\frac{1}{P'(0)-P'(y)}-\frac{1}{\tilde P'(0)-\tilde P'(y)}\dd y \right| \\
		&\leq \frac{1}{\chi}\int_{-1}^z\frac{|P'(0)-\tilde P'(0)|+|P'(y)-\tilde P'(y)|}{|P'(0)-P'(y)||\tilde P'(0)-\tilde P'(y)|}\dd y,
	\end{align*}
	recalling that we have a uniform lower bound for $|P'(0)-P'(y)|$ and $|\tilde P'(0)-\tilde P'(y)|$ by Lemma \ref{LEM:1}.

This finishes the proof of Lemma \ref{lem:continuity}.
\end{proof}
 
\begin{lem}\label{LEM:3}
	Suppose $ U $ is admissible in the sense of Definition \ref{def:admissible} and that Assumption \ref{as:hatchi} holds.
	Then $ \mathcal{T}(U)\in C^1((-\infty,0],\R)$ and 
	\begin{equation}\label{eq:Vprime}
		\mathcal{T} (U)'(z) =\mathcal{T} (U)(z)\frac{1+\hat\chi P(z)-(1+\hat\chi)\mathcal{T} (U)(z)}{\chi\big(P'(0)-P'(z)\big)},\quad \forall z< 0.
	\end{equation}
	Moreover  
	\[ \lim_{z\to 0^-}\mathcal{T} (U)'(z)=\frac{P'(0)}{1+\hat\chi}\frac{1+\hat\chi P(0)}{1+\hat\chi U(0^-)}. \]
\end{lem}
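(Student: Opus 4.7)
The plan is to combine the chain rule applied to the representation $\mathcal{T}(U)(z) = \mathcal{U}(\tau^{-1}(z))$ for $z<0$ with a careful analysis of the $0/0$ indeterminacy at $z=0^-$. The $C^1$ regularity on $(-\infty,0)$ and the value $\mathcal{T}(U)(0^-) = (1+\hat\chi P(0))/(1+\hat\chi)$ are already contained in Lemma \ref{LEM:2}, so the real content of the proof is deriving the explicit expression \eqref{eq:Vprime} and identifying the boundary limit.

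For the formula \eqref{eq:Vprime}, I would first compute $\mathcal{U}'(t)$ by logarithmic differentiation. Writing
\[ \mathcal{U}(t)^{-1} = (1+\hat\chi)\,F(t), \qquad F(t):=\int_{-\infty}^t e^{-\int_l^t(1+\hat\chi P(\tau(s)))\dd s}\dd l, \]
a direct differentiation yields $F'(t) = 1 - (1+\hat\chi P(\tau(t)))\,F(t)$, hence
\[ \mathcal{U}'(t) = -\frac{F'(t)}{(1+\hat\chi)F(t)^2} = \mathcal{U}(t)\bigl[1+\hat\chi P(\tau(t))-(1+\hat\chi)\mathcal{U}(t)\bigr]. \]
Combining this with $\tau'(t) = \chi(P'(0)-P'(\tau(t)))$ from \eqref{eq:tau} and the chain rule
\[ \mathcal{T}(U)'(z) = \frac{\mathcal{U}'(\tau^{-1}(z))}{\tau'(\tau^{-1}(z))} \]
produces \eqref{eq:Vprime} directly.

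For the boundary limit at $z=0^-$, I would observe that the numerator $N(z):= 1+\hat\chi P(z)-(1+\hat\chi)\mathcal{T}(U)(z)$ vanishes there (because $(1+\hat\chi)\mathcal{T}(U)(0^-) = 1+\hat\chi P(0)$) and so does the denominator $D(z):= \chi(P'(0)-P'(z))$. The plan is to use a Taylor expansion at $0^-$: from the pressure equation $-\sigma^2 P'' + P = U$ we get $-\chi P''(0^-) = \hat\chi(U(0^-)-P(0))$, which is strictly positive because Assumption \ref{as:hatchi} forces $U(0^-)\geq \tfrac{2}{2+\hat\chi} > \tfrac{1}{2}\geq P(0)$ (the latter inequality following from $0\leq U\leq 1$ and $U\equiv 0$ on $[0,\infty)$). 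Hence $D(z) = \hat\chi(U(0^-)-P(0))\,z + o(z)$. Setting $L:=\lim_{z\to 0^-}\mathcal{T}(U)'(z)$, a first-order expansion gives $N(z) = [\hat\chi P'(0)-(1+\hat\chi)L]\,z + o(z)$, which turns \eqref{eq:Vprime} into a linear equation for $L$; solving it and using $(1+\hat\chi)\mathcal{T}(U)(0^-) = 1+\hat\chi P(0)$ delivers the announced formula.

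The main technical obstacle is justifying the \emph{a priori} existence of $L$, which the Taylor/L'H\^opital argument presumes. I would handle this by switching to the $t$-parametrization and studying the ODE system as $t\to-\infty$. Linearizing $\tau'(t) = \chi(P'(0)-P'(\tau(t)))$ around $\tau=0$ gives $\tau'(t) \sim \lambda\,\tau(t)$ with $\lambda := -\chi P''(0^-)>0$, so $\tau(t)$ approaches $0^-$ exponentially fast. Setting $w(t) := \mathcal{T}(U)(0^-) - \mathcal{U}(t)$, the expression for $\mathcal{U}'(t)$ derived above leads, after linearization, to
\[ w'(t) + (1+\hat\chi P(0))\,w(t) = -\mathcal{T}(U)(0^-)\,\hat\chi P'(0)\,\tau(t) + \mathrm{h.o.t.}, \]
a linear inhomogeneous ODE whose only solution compatible with $w(t)\to 0$ at $-\infty$ satisfies $w(t) = O(e^{\lambda t})$ and for which $w(t)/\tau(t)$ admits an explicit finite limit. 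Substituting this back into $\mathcal{U}'(t)/\tau'(t)$ yields a finite limit for $\mathcal{T}(U)'(\tau(t))$ as $t\to-\infty$, thereby establishing the existence of $L$ and making the Taylor argument above rigorous.
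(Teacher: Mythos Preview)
Your derivation of \eqref{eq:Vprime} via the logarithmic differentiation $F'(t)=1-(1+\hat\chi P(\tau(t)))F(t)$ is equivalent to the paper's computation (which differentiates the integral formula \eqref{eq:calUprime} directly and then regroups); this part is essentially identical.

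For the boundary limit, however, the paper takes a shorter route that sidesteps the circularity you identify. Instead of applying L'H\^opital to the ``bare'' quotient $\big(1+\hat\chi P(z)-(1+\hat\chi)\mathcal T(U)(z)\big)\big/\chi(P'(0)-P'(z))$, the paper first multiplies numerator and denominator by $I(z):=\int_{-\infty}^{\tau^{-1}(z)}\exp\big(\int_0^l 1+\hat\chi P(\tau(s))\,\dd s\big)\dd l$. The point is that $(1+\hat\chi)\mathcal T(U)(z)\,I(z)$ is then an explicit exponential, so the new numerator and denominator are concrete functions of $z$ not involving $\mathcal T(U)$; differentiating them is unambiguous, the two terms coming from $I'$ and $E'$ cancel in the numerator, and L'H\^opital applies without any \emph{a priori} assumption on the existence of $L$. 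Your route---linearizing the $(\tau,w)$-system at $t\to-\infty$, showing $\tau(t)\asymp e^{\lambda t}$ and $w(t)/\tau(t)\to r_0$ via the variation-of-constants formula---is correct and gives additional information about the asymptotics of $\mathcal T(U)$ near $0^-$, but it requires more work to make the control of the higher-order remainders fully rigorous. The paper's rewriting is more elementary; your approach is more structural.
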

\begin{proof}
	We divide the proof in two steps.\medskip

	\noindent\textbf{Step 1.} We prove \eqref{eq:Vprime}. 
	
We observe that 
$$ 
	\tau'(\tau^{-1}(z)):=\chi\big(P'(0)-P'(z)\big), 
$$
	therefore $\mathcal T(U)$ is differentiable for each $z<0$ and  
	\[ \mathcal{T} (U)'(z) = \mathcal{U}'(\tau^{-1}(z))\frac{1}{\tau'(\tau^{-1}(z))}=\mathcal{U}'(\tau^{-1}(z))\frac{1}{\chi\big(P'(0)-P'(z)\big)}. \]
	By Equation \eqref{eq:calUprime} in Lemma \ref{LEM:2} we have 
	\begin{align*}
	\mathcal{U}'(t) &= \frac{1}{1+\hat\chi}\bigg[\int_{-\infty}^{t} e^{-\int_{l}^{t}1+\hat\chi P(\tau(s))\dd s} \dd l \bigg]^{-2} \\
	&\quad \times \Big(\int_{-\infty}^{t} e^{-\int_{l}^{t}1+\hat\chi P(\tau(s))\dd s}\big(1+\hat\chi P(\tau(t))\big) \dd l -1\Big)\\
		&=  \bigg[(1+\hat\chi)\int_{-\infty}^{t} e^{-\int_{l}^{t}1+\hat\chi P(\tau(s))\dd s} \dd l \bigg]^{-2} \\
		&\quad \times \Big( (1+\hat\chi) \int_{-\infty}^{t} e^{-\int_{l}^{t}1+\hat\chi P(\tau(s))\dd s}\dd l \big(1+\hat\chi P(\tau(t))\big)  - (1+\hat\chi)\Big)\\
		&=\mathcal{U}^2(t) \Big(\,\mathcal{U}^{-1}(t)\big(1+\hat\chi P(\tau(t))\big) - (1+\hat\chi)\Big)\\
		&=\mathcal{U}(t)\Big(1+\hat\chi P(\tau(t)) -(1+\hat\chi)\,\mathcal{U}(t)\Big).
	\end{align*}
	Therefore, we can rewrite $ \mathcal{T} (U)'(z) $ as 
	\begin{align*}
		\mathcal{T} (U)'(z)&=\frac{\mathcal{U}'(\tau^{-1}(z))}{\chi\big(P'(0)-P'(z)\big)}\\
		&= \mathcal{U}(\tau^{-1}(z))\frac{1+\hat\chi P(z) -(1+\hat\chi)\,\mathcal{U}(\tau^{-1}(z))}{\chi\big(P'(0)-P'(z)\big)}\\
		&=\mathcal{T} (U)(z)\frac{1+\hat\chi P(z)-(1+\hat\chi)\mathcal{T} (U)(z)}{\chi\big(P'(0)-P'(z)\big)}.
	\end{align*}
	Equation \eqref{eq:Vprime} follows. \medskip

	\noindent\textbf{Step 2.} Next we prove
	\[ \lim_{z\to 0^-}\mathcal{T} (U)'(z)=\frac{P'(0)}{1+\hat\chi }\frac{1+\hat\chi P(0)}{1+\hat\chi U(0)}. \]
	Recall that 
	\begin{equation*}
		\mathcal T(U)(z)=\mathcal U(\tau^{-1}(z))=\dfrac{1}{(1+\hat\chi)\int_{-\infty}^{\tau^{-1}(z)} e^{-\int_{l}^{\tau^{-1}(z)}1+\hat\chi P(\tau(s))\dd s} \dd l}=\dfrac{e^{\int_0^{\tau^{-1}(z)}1+\hat\chi P(\tau(s))\dd s}}{(1+\hat\chi)\int_{-\infty}^{\tau^{-1}(z)}e^{\int_0^{l}1+\hat\chi P(\tau(s))\dd s}\dd l}.
	\end{equation*}
	We have shown in Step 1 that for any $ z<0 $
	\begin{align}\label{eq:LEM3-0}
		\mathcal{T} (U)'(z)=\mathcal{T} (U)(z)\frac{1+\hat\chi P(z)-(1+\hat\chi)\mathcal{T} (U)(z)}{\chi\big(P'(0)-P'(z)\big)},
	\end{align}
	and by Lemma \ref{LEM:2} we have 
	\[ \lim_{z\to 0^-}\mathcal{T} (U)(z)=\frac{1+\hat\chi P(0)}{1+\hat\chi}. \]
	Moreover, 
	\begin{align*}
		\dfrac{1+\hat\chi P(z)-(1+\hat\chi)\mathcal T_1(U)(z)}{\chi \big(P'(0)-P'(z)\big)}&=\dfrac{(1+\hat\chi P(z))\int_{-\infty}^{\tau^{-1}(z)}e^{\int_0^l 1+\hat\chi P(\tau(s))\dd s}\dd l-e^{\int_0^{\tau^{-1}(z)}1+\hat\chi P(\tau(s))\dd s}}{\chi(P'(0)-P'(z))\int_{-\infty}^{\tau^{-1}(z)}e^{\int_0^l1+\hat\chi P(\tau(s))\dd s}}=:\frac{N(z)}{D(z)},
	\end{align*}
	and
	\begin{align*}
		\dfrac{N'(z)}{D'(z)}&=\frac{\hat\chi P'(z)\int_{-\infty}^{\tau^{-1}(z)}e^{\int_0^l1+\hat\chi P(\tau(s))\dd s}\dd l} {-\chi P''(z)\int_{-\infty}^{\tau^{-1}(z)}e^{\int_0^l1+\hat\chi P(\tau(s))\dd s}\dd s + \chi\big(P'(0)-P'(z)\big)(\tau^{-1})'(z)e^{\int_0^{\tau^{-1}(z)}1+\hat\chi P(\tau(s))\dd s}} \\
		&=\dfrac{P'(z)}{\hat\chi(U(z)-P(z))+(1+\hat\chi)\mathcal T(U)(z)}\xrightarrow[{z\to 0^-}]{} \frac{P'(0)}{\hat\chi U(0^-)+1}.
	\end{align*}
	Therefore, by using L'H\^opital's rule, $\mathcal T(U)'(z)$ admits a limit when $z\to 0^-$ and 
	\begin{equation*}
		\lim_{z\to 0^-}\mathcal T(U)'(z)=\frac{P'(0)}{1+\hat\chi }\frac{1+\hat\chi P(0)}{1+\hat\chi U(0^-)}.
	\end{equation*}
\end{proof}
\begin{lem}[Compactness of $\mathcal T$]\label{LEM:4}
	Let Assumption \ref{as:hatchi} hold. The metric space $\mathcal A$ equipped with the distance induced by the $\Vert \cdot\Vert_{\eta}$ norm (defined in \eqref{eq:norm-eta}) is a complete metric space on which the map $\mathcal T:\mathcal A\to\mathcal A$ is  compact.
\end{lem}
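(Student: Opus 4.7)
The plan is to handle the two claims separately: first the completeness of $(\mathcal A,\Vert\cdot\Vert_\eta)$, then the compactness of $\mathcal T$ via an Arzel\`a--Ascoli argument based on the uniform derivative bound furnished by Lemmas \ref{LEM:1} and \ref{LEM:3}. The key observation throughout is that the weight $\alpha(z)=\sqrt{-z}\,e^{\eta z}$ vanishes both at $z=0^-$ and as $z\to-\infty$, so that norm convergence in $\Vert\cdot\Vert_\eta$ is morally equivalent to locally uniform convergence on $(-\infty,0)$ for uniformly bounded sequences.

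For completeness, let $(U_n)\subset\mathcal A$ be Cauchy for $\Vert\cdot\Vert_\eta$. Since $\alpha$ is bounded below by a positive constant on any compact $K\Subset(-\infty,0)$, the sequence $(U_n)$ is Cauchy in $C(K)$, hence converges locally uniformly to some $U\in C((-\infty,0))$. Monotonicity, the two-sided bounds $0\leq U_n\leq 1$, and the lower bound $U_n(z)\geq \frac{2}{2+\hat\chi}$ for $z<0$ (which holds for every admissible function because of monotonicity and the condition at $0^-$) all pass to the pointwise limit, so $U$ is non-increasing with $U(z)\in[\frac{2}{2+\hat\chi},1]$ on $(-\infty,0)$. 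Extending $U\equiv 0$ on $[0,+\infty)$ places $U$ in $\mathcal A$. Finally, passing to the limit $m\to\infty$ in the pointwise inequality $\alpha(z)|U_n(z)-U_m(z)|\leq \varepsilon$ gives $\Vert U_n-U\Vert_\eta\leq \varepsilon$, so the convergence is indeed in the norm.

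For compactness of $\mathcal T$, I would apply Arzel\`a--Ascoli to the family $\{\mathcal T(U)\mid U\in\mathcal A\}$ on any compact subset $[-L,-1/L]$ of $(-\infty,0)$. By Lemma \ref{LEM:3}, for $z<0$,
\begin{equation*}
\mathcal T(U)'(z)=\mathcal T(U)(z)\,\frac{1+\hat\chi P(z)-(1+\hat\chi)\mathcal T(U)(z)}{\chi\bigl(P'(0)-P'(z)\bigr)}.
\end{equation*}
The numerator is bounded by $2+\hat\chi$ uniformly in $U\in\mathcal A$ thanks to the bounds $0\leq P\leq 1$ and $0\leq\mathcal T(U)\leq 1$ from Lemma \ref{LEM:2}; the denominator is bounded away from zero uniformly in $U\in\mathcal A$ by the locally uniform estimate of Lemma \ref{LEM:1}. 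Hence $|\mathcal T(U)'|$ is bounded on $[-L,-1/L]$ by a constant depending only on $L$ and $\hat\chi$, and the family $\mathcal T(\mathcal A)$ is equicontinuous there. Together with the uniform bound $\mathcal T(U)\leq 1$, Arzel\`a--Ascoli plus a diagonal extraction over an exhaustion of $(-\infty,0)$ by compact intervals yields, from any sequence $(U_n)\subset\mathcal A$, a subsequence such that $\mathcal T(U_{n_k})$ converges locally uniformly on $(-\infty,0)$ to some function $V$.

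It remains to upgrade locally uniform convergence to convergence in $\Vert\cdot\Vert_\eta$ and to check that the limit belongs to $\mathcal A$. For the former, given $\varepsilon>0$ fix $L>1$ so large that $\alpha(z)\leq\varepsilon$ for $z\in(-\infty,-L)\cup(-1/L,0)$, which is possible since $\alpha(z)\to 0$ at both ends; using $|\mathcal T(U_{n_k})-V|\leq 2$ we control the $\Vert\cdot\Vert_\eta$-norm outside $[-L,-1/L]$ by $2\varepsilon$, while on $[-L,-1/L]$ the norm contribution is controlled by $\max_{[-L,-1/L]}\alpha$ times the local sup, which tends to zero by Arzel\`a--Ascoli. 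For the latter, the limit $V$ inherits non-increasing monotonicity, the bounds $\frac{2}{2+\hat\chi}\leq V\leq 1$ on $(-\infty,0)$, and continuity on $(-\infty,0)$ from Lemma \ref{LEM:2}; since $V$ is bounded and non-increasing the limit $V(0^-)$ exists and lies in $[\frac{2}{2+\hat\chi},1]$, so extending $V\equiv 0$ on $[0,+\infty)$ puts $V\in\mathcal A$. The main technical point, and the step I expect to require the most care, is the uniform lower bound on $|P'(0)-P'(z)|$ on compact sets of $(-\infty,0)$ independent of $U\in\mathcal A$, but this is exactly the local uniformity already established in Lemma \ref{LEM:1}. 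The rest is a standard packaging of Arzel\`a--Ascoli with the weighted-norm reduction enabled by the vanishing of $\alpha$ at both endpoints.
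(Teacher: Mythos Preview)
Your proposal is correct and follows essentially the same approach as the paper: completeness via closedness of $\mathcal A$ in the ambient weighted space (locally uniform convergence preserves the defining constraints), and compactness via the uniform derivative bound from Lemma~\ref{LEM:3} combined with the locally uniform lower bound on $P'(z)-P'(0)$ from Lemma~\ref{LEM:1}, then Arzel\`a--Ascoli, diagonal extraction, and the vanishing of $\alpha$ at both endpoints to upgrade to $\Vert\cdot\Vert_\eta$-convergence. The only cosmetic difference is that the paper frames completeness as ``$\mathcal A$ is closed in the Banach space $B_\eta$'' rather than working directly with Cauchy sequences, but the content is the same.
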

\begin{proof}
	Let us first briefly recall that the space $\mathcal A$ is complete. Let $B_\eta$ be the set of all continuous functions defined on $(-\infty, 0)$ with finite $\Vert \cdot\Vert_{\eta}$ norm:
	\begin{equation*}
		B_\eta:=\{u\in C^0\big((-\infty, 0)\big)\,|\, \Vert u\Vert_{\eta}<+\infty\}.
	\end{equation*}
	It is classical that $B_{\eta}$ equipped with the norm $\Vert \cdot\Vert_{\eta}$ is a Banach space. Therefore, in order to prove the completeness of $\mathcal A$,  it suffices to show that $\mathcal A$ is closed in $B_{\eta}$. Let $U_n\in \mathcal A$, $U\in B_{\eta}$ be such that $\lim \Vert U_n-U\Vert_\eta=0$. Then $U_n$ converges to $U$ locally uniformly on $(-\infty, 0)$, and in particular we have 
	\begin{gather*}
		U(z)\in \left[\frac{2}{2+\hat\chi},1 \right] \text{ for all } z\leq 0, \\
		U \text{ is non-increasing}.
	\end{gather*}
	Therefore $u\in \mathcal A$ and the completeness is proved. \medskip

	Let us show that $\mathcal T$ is a compact map of the metric space $\mathcal A$. We have shown in Lemma \ref{LEM:2} that $\mathcal T$ is continuous on $\mathcal A$ and leaves $\mathcal A$ stable.  Let $U_n\in \mathcal A$, then combining Equation \eqref{eq:Vprime} and the local uniform lower bound of $P'(z)-P'(0)$ from Lemma \ref{LEM:1}, the family $\mathcal T(U_n)'|_{[-k, -1/k]}$ is uniformly  Lipschitz continuous  on $[-k, -1/k]$ for each $k\in\mathbb N$. Therefore the Ascoli-Arzelà applies and the set $\{\mathcal T(U_n)|_{[-k, -1/k]}\}_{n\geq 0}$ is relatively compact for the uniform topology on $[-k, -1/k]$ for each $k\in\mathbb N$. Using a diagonal extraction process, there exists a subsequence $\varphi(n)$ and a continuous function $U$ such that $U_{\varphi(n)}\to U$  uniformly on every compact subset of $(-\infty, 0)$. Let us show that $\Vert U_{\varphi(n)}-U\Vert_{\eta}\to 0$ as $n\to+\infty$. Let $\varepsilon>0$ be given, and let $z_0$, $z_1$ be respectively the smallest and largest root of the equation:
	\begin{equation*}
		\eta z+\frac{1}{2}\ln(-z)=\ln\left(\frac{\varepsilon}{2}\right).
	\end{equation*}
	Then, on the one hand,  for any $z\not\in [z_0, z_1]$, we have $\sqrt{-z}e^{\eta z}\leq \frac{\varepsilon}{2}$ and therefore 
	\begin{equation*}
		\sqrt{-z}e^{\eta z}|\mathcal T(U_{\varphi(n)})(z)-\mathcal{T}(U)(z)|\leq \sqrt{-z}e^{\eta z}(|U_{\varphi(n)}(z)|+|U(z)|)\leq \varepsilon.
	\end{equation*}
	On the other hand, since $\mathcal T(U_{\varphi(n)})$ converges locally uniformly to $\mathcal{T}(U)$, there is $n_0\geq 0$ such that 
	\begin{equation*}
		\sup_{z\in[z_0, z_1]}\sqrt{-z}e^{\eta z}|\mathcal T(U_{\varphi(n)})(z)-\mathcal{T}(U)(z)|\leq \varepsilon, \text{ for all } n\geq n_0.
	\end{equation*}
	We conclude that 
	\begin{equation*}
		\Vert \mathcal T(U_{\varphi(n)})-\mathcal{T}(U)\Vert_{\eta}\leq \varepsilon, 
	\end{equation*}
	for all $n\geq n_0$. The convergence is proved. This ends the proof of Lemma \ref{LEM:4}
\end{proof}

We are now in the position to prove Theorem \ref{thm:sharp-TW}. 
\begin{proof}[Proof of Theorem \ref{thm:sharp-TW}]
	We remark that the set of admissible functions $\mathcal A$ is a nonempty, closed, convex, bounded subset of the Banach space $B_\eta$, and $\mathcal T$ is a continuous compact operator on $\mathcal A$ (Lemma \ref{LEM:4}). Therefore, a direct application of the Schauder fixed-point Theorem (see e.g. \cite[Theorem 2.A p. 57]{Zei-86}) shows that $\mathcal T$ admits a fixed point $U$ in $\mathcal A$:
	\begin{equation*}
		\mathcal T(U)=U.
	\end{equation*}
	Applying Lemma \ref{LEM:2} and \ref{LEM:3}, $U$ is strictly decreasing on $(-\infty, 0)$, $U((-\infty, 0))\subset [\frac{2}{2+\hat\chi}, 1]$, $U$ is $C^1$ on $(-\infty, 0]$ and 
	\begin{equation*}
		\lim_{z\to 0^-}U(z)=\frac{1+\hat\chi P(0)}{1+\hat\chi} \text{ and }\lim_{z\to 0^-}U'(z)=\frac{P'(0)}{1+\hat\chi}\frac{1+\hat\chi P(0)}{1+\hat\chi U(0)}.
	\end{equation*}
	Finally
	\begin{equation}\label{eq:Uprime}
		U'(z)=U(z)\frac{1+\hat\chi P(z)-(1+\hat\chi)U(z)}{\chi\big(P'(0)-P'(z)\big)}, \text{ for all }z<0,
	\end{equation}
	therefore
	\begin{equation*}
		\chi P'(0)U'(z)-\chi P'(z)U'(z)-\chi U(z)P''(z)=U(z)(1-U(z)),\text{ for all }  z<0,
	\end{equation*}
	and finally
	\begin{equation*}
		\chi P'(0)U'(z)-\chi (P'(z)U(z))'=U(z)(1-U(z)), \text{ for all }z<0.
	\end{equation*}
	
	We now prove that  $ U(-\infty):=\lim_{z\to \infty}U(z) =1 $. Since $ U $ is monotone decreasing on $ (-\infty,0) $ and is bounded by $ 1 $ from above,  $ U(-\infty) $ exists and, by a direct application of  Lebesgue's dominated convergence theorem, $P$ also converges to a limit near $-\infty$,  $ P(-\infty)=U(-\infty) $. Therefore $U'(z)\to 0$, $P'(z)\to 0 $ and $P''(z)\to 0$ as $z\to-\infty$. We conclude that 
	\begin{equation*}
		\lim_{z\to-\infty} U(z)(1-U(z))=0,
	\end{equation*}
	which implies that $U(-\infty)=1$.
	
	Let us define  $u(t, x):=U(x-ct) $,  with $c:=-\chi P'(0)$. The characteristics associated with $u(t, x)$ are 
	\begin{equation*}
		\frac{\dd}{\dd t}h(t, x)=-\chi(\rho_x\star u)(t, h(t,x))=\chi (\rho\star U)(h(t, x)-ct)=-\chi P'(h(t,x)-ct), 
	\end{equation*}
	and $u(t,x)$ satisfies for all $x$ such that $h(t, x)-ct<0$: 
	\begin{align*}
		\partial_t u(t, h(t, x))&=\partial_t( U(h(t, x)-ct)) = \left(\frac{\dd}{\dd t} (h(t, x)-ct)  \right) U'(h(t,x)-ct) \\
		&=\chi (-P'(h(t, x)-ct)+P'(0))U'(h(t, x)-ct) \\
		&= u(t, h(t, x))(1+\hat\chi (\rho\star u)(t, h(t,x))-(1+\hat\chi)u(t, h(t,x))).
	\end{align*}
	If $h(t,x)-ct> 0$ then $u(t, h(t,x))=U(h(t,x)-ct)=0$ (locally in $t$) and therefore 
	\begin{equation*}
		\partial_t u(t, h(t, x))=0=u(t, h(t, x))(1+\hat\chi (\rho\star u)(t, h(t,x))-(1+\hat\chi )u(t, h(t,x))).
	\end{equation*}
	Since $\{0\}$ is a negligible  set for the Lebesgue measure, we conclude that $u(t, x)$ is a solution integrated along the characteristics to \eqref{eq:main} and thus $U$ is a traveling wave profile with speed $c=-P'(0)>0$ as defined in Definition \ref{def:TW}. Finally
	\begin{equation*}
		c=-\chi P'(0)=\frac{\chi}{2\sigma }\int_{-\infty}^0 e^{y}U(y)\dd y\in \left(\frac{\chi}{\sigma(2+\hat\chi)}, \frac{\chi}{2\sigma }\right)=\left(\frac{\sigma\hat\chi}{2+\hat\chi}, \frac{\sigma \hat\chi}{2 }\right).
	\end{equation*}
	This finishes the proof of Theorem \ref{thm:sharp-TW}
\end{proof}

\subsection{Non-existence of continuous sharp traveling waves}

\begin{rem}
	This result tells us if $ U $ is a sharp traveling wave solution to \eqref{eq:main}, then it must be discontinuous. This situation is very different from the porous medium case. However, it does not exclude the existence of {\it positive} continuous traveling wave solutions which decay to zero near $+\infty$. In fact, as we will show in the numerical simulations in the later section, we can observe numerically large speed traveling wave solutions that are smooth and strictly positive. 
\end{rem}
\begin{proof}[Proof of Proposition \ref{prop:smooth-TW}]
	We divide the proof in 3 steps.

	\begin{stepping}

		\step We show the estimate \eqref{eq:smooth-speed}.

		Assume by contradiction that there exists $x\in \mathbb R $ such that 
		\begin{equation}\label{eq:zeroY}
			-\chi \int_{\mathbb R}\rho_x(x-y)U(y)\dd y = c.
		\end{equation}
		We let $P(x):=(\rho\star U)(x)=\int_\mathbb R \rho(x-y)U(y)\dd y$. Since $U\in C^0(\mathbb R)$, we have that $P\in C^{2}(\mathbb R)$.  Differentiating, we find that  
		\begin{align*}
			P'(x)&=\int_{\mathbb R}\rho_x(x-y)U(y)\dd y = (\rho'\star U)(x),\\
			\sigma^2 P''(x)&= \int_{\mathbb R} \rho(x-y)U(y)\dd y - U(x) = P(x)-U(x).
		\end{align*}
		 Letting $Y(x):=-\chi(\rho_x\star U)(x)-c = -\chi P'(x)-c$, then $Y\in C^1(\mathbb R)$ and we have
		\begin{equation} \label{eq:diffY}
			Y'(x)=-\chi P''(x)=\hat\chi\big( U(x)-(\rho\star U)(x)\big).
		\end{equation}
		Since $\lim_{x\to+\infty} U(x)=0$, we have $\lim_{x\to+\infty}Y(x)=-c<0$. Remark that by our assumption \eqref{eq:zeroY}, $Y$ has at least one zero  and therefore  the largest root of $Y$ is well-defined:
		\begin{equation*}
			x_*:=\inf\{x\,|\,\forall y>x,  Y(y)<0\}.
		\end{equation*}

		We first remark that 
		\begin{equation*}
			\frac{\dd \phantom{t}}{\dd t}\big(h(t, x)-ct\big)=\frac{\dd\phantom{t}}{\dd t}h(t, x)-c = -\chi(\rho_x\star u)(t, h(t, x))-c = Y(h(t, x)-ct),
		\end{equation*}
		where we recall that $u(t, x):=U(x-ct) $ is a solution to \eqref{eq:main}.
		In particular since $Y(x_*)=0$ by the continuity of $Y$, we have $ h(t, x_*)-ct=x_*$.
		Next by  using  \eqref{eq:integrated} we have  
		\begin{align*}
			\frac{\dd\phantom{t}}{\dd t}u(t, h(t, x_*))&=u(t, h(t, x_*))\big(1+\hat\chi (\rho\star u)(t, h(t,x_*))-(1+\hat\chi)u(t, h(t, x_*))\big)\\
			&=U(h(t, x_*)-ct)\big(1+\hat\chi (\rho\star U)(h(t, x_*)-ct)-(1+\hat\chi)U(h(t, x_*)-ct)\big) \\
			&=U(x_*)\big(1+\hat\chi P(x_*)-(1+\hat\chi)U(x_*)\big),
		\end{align*}
		and since $u(t, h(t, x_*))=U(h(t, x_*)-ct)=U(x_*) $ does not depend on $t$, this yields 
		\begin{equation*}
			0=U(x_*)\big(1+\hat\chi P(x_*)-(1+\hat\chi)U(x_*)\big).
		\end{equation*}
		We conclude that either $U(x_*)=0$ or $U(x_*)=\frac{1+\hat\chi P(x_*)}{1+\hat\chi}>0$. In the remaining part of this step  we will show that these two cases lead to contradiction.\medskip

		{\bf Case 1: $U(x_*)=\frac{1+\hat\chi P(x_*)}{1+\hat\chi}>0$.}  By \eqref{eq:diffY} we have: 
		\begin{equation*}
			Y'(x_*)=\hat\chi\big(U(x_*)-P(x_*)\big)=(1-P(x_*))\frac{\hat\chi}{1+\hat\chi},
		\end{equation*}
		however $U(x)\in [0,1]$, $U(x)\not\equiv 1$ and thus $P(x_*)=(\rho\star U)(x_*)<1$ which shows $Y'(x_*)>0$. Yet by definition of $x_*$ we have $Y(x_*)=0$ and $Y(x)<0$ for all $x>x_*$, hence $Y'(x_*)\leq 0$, which is a contradiction.\medskip
		
		{\bf Case 2:  $U(x_*)=0$.} By \eqref{eq:diffY} we have 
		\begin{equation}\label{eq:Y'negative}
			Y'(x_*)=0-\hat\chi P(x_*)=-\hat\chi (\rho\star U)(x_*)<0.
		\end{equation}
		Hence by the continuity of $ Y $, there exists a $ x_0<x_* $ such that 
		\[ Y(x)>0,\quad \forall x\in [x_0,x_*).\]	
		Since for any $ t>0 $, we have
		\begin{equation*}
					\diff{t}(h(t, x_0)-ct)= Y(h(t, x_0)-ct)>0,
		\end{equation*}	
		the function  $t\mapsto h(t,x_0)-ct $ is increasing and converges to $x_*$ as $t\to +\infty$. In particular as $t\to +\infty $ we have $u(t, h(t, x_0))=U(h(t, x_0)-ct)\to U(x_*)=0$. Let $T>0$ be such that $0<u(t, h(t, x_0))\leq \frac{1}{2(1+\hat\chi)}$ for all $t\geq T$. We have
		\begin{align*}
			\frac{\dd\phantom{t}}{\dd t}u(t, h(t, x_0))&=u(t, h(t, x_0))\big(1+\hat\chi (\rho\star u)(t, h(t,x_0))-(1+\hat\chi)u(t, h(t,x_0))\big)\\
			&\geq \frac{1}{2}u(t, h(t, x_0)),
		\end{align*}
		hence $u(t, h(t, x_0))\geq u(T, h(T,x_0))e^{\frac{t-T}{2}}$. In particular letting 
		\begin{equation*}
			t^*:=T-2\ln\big({u(T, h(T,x_0))}\big)>T,
		\end{equation*}
		we have
		\begin{equation*}
			u\left(t^*, h(t^*, x_0)\right)\geq 1>\frac{1}{2(1+\hat\chi)},
		\end{equation*}
		which is a contradiction. Since both Case 1 and Case 2 lead to contradiction, we have shown \eqref{eq:smooth-speed}.

		\step Regularity of $u$. 

		We have shown in Step 1 that for all $x\in \mathbb R $ the strict inequality:
		\begin{equation*}
			Y(x)=-\chi P'(x)-c < 0
		\end{equation*}
		holds.
		Let $x\in \mathbb R$ and $t_0>0$. Then, there exists $y\in\mathbb R$ such that $h(t_0, y)=x$, where $h$ is the characteristic semiflow defined by \eqref{eq:characteristics}.  Since
		\begin{equation*}
			\frac{\dd\phantom{t}}{\dd t}(h(t, y)-ct)=-\chi(\rho_x\star u)(t, h(t, y))-c=Y(h(t, y))\neq 0,
		\end{equation*}
		the mapping $t\mapsto h(t, y)-ct$ has a $C^1$ inverse which we denote $\varphi(z)$, i.e. 
		\begin{equation*}
			\forall z\,|\, \exists t>0, z=h(t, y)-ct, \qquad h(\varphi(z), y)-c\varphi(z)=z. 
		\end{equation*}
		Then we have
		\begin{equation*}
			U(h(t, y)-ct)=u(t, h(t, y))\qquad \Leftrightarrow \qquad U(z)=u(\varphi(z), h(\varphi(z),y)),
		\end{equation*}
		with $z=h(t, y)$ in a neighbourhood of $x$. Since $\varphi $ is $C^1$ and the function $t\mapsto u(t, h(t,y))$ is $C^1$, we conclude that $U$ is $C^1$ in a neighbourhood of $x$. The regularity is proved.

		\step We show that $u$ is positive.

		Combining Step 1 and 2, we know that  $u$ is a classical solution to the equation:
		\begin{align*}
			-cU_x-\chi((\rho\star U)_xU)_x&=U(1-U) \\
			(-c-\chi P') U_x&=U(1+\hat\chi P-(1+\hat\chi)U) \\
			  U_x&=\frac{U}{Y}(1+\hat \chi P-(1+\hat\chi)U),
		\end{align*}
		and since $Y < 0$, the right-hand side is a locally Lipschitz vector field in the variable $U$. In particular, the classical Cauchy-Lipschitz Theorem applies and the only solution with $U(x)=0$ for some $x\in\mathbb R$  is $U\equiv 0$. Since $U$ is non-trivial by assumption, $U$ has to be positive.
	\end{stepping}
\end{proof}

\newpage 
	 \appendix
	 \begin{center}
		 \LARGE\textbf{Appendix}
	 \end{center}
	 
	 \section{An nonlinear function}
	 We study a function used in the proof of Lemma \ref{LEM:1} and Assumption \ref{as:hatchi}.
	 \begin{lem}\label{lem:B1}
		 The function  
		 \begin{equation*}
			 f(x):= \ln\left(\frac{2-x}{x}\right)+\frac{2}{2+x}\left(\frac{x}{2}\ln\left(\frac{x}{2}\right)+1-\frac{x}{2}\right)
		 \end{equation*}
		 defined for $x\in (0,2)$ is strictly decreasing and satisfies 
		 \begin{align*}
			 \lim_{x\to 0^+}f(x)&=+\infty, & \lim_{x\to 2^-}f(x)&=-\infty.
		 \end{align*}
		 In particular $f(x)$ has a unique root in $(0,2)$.

		 Finally, we have $f(1)>0$.
	 \end{lem}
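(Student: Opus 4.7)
The plan is to establish the four assertions in sequence. Strict monotonicity together with the two limits at the endpoints of $(0,2)$ yields the existence and uniqueness of a root by the intermediate value theorem; the inequality $f(1)>0$ is then a direct numerical check.

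For the limits I would handle each piece separately. Write $f(x) = \ln((2-x)/x) + \tfrac{2}{2+x}g(x)$, where $g(x) := \tfrac{x}{2}\ln(\tfrac{x}{2}) + 1 - \tfrac{x}{2}$. At $x\to 0^+$, the logarithmic factor $\ln((2-x)/x)$ blows up to $+\infty$, while $g(x)\to 1$ since $t\ln t\to 0$ as $t\to 0^+$, and $\tfrac{2}{2+x}\to 1$; hence $f(x)\to +\infty$. At $x\to 2^-$, $\ln((2-x)/x)\to -\infty$ and $g(x)\to g(2)=0$, so the second summand stays bounded and $f(x)\to -\infty$.

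The main computation is the monotonicity. I would differentiate directly: from $g'(x)=\tfrac{1}{2}\ln(\tfrac{x}{2})$ and $\tfrac{d}{dx}\ln((2-x)/x) = -\tfrac{2}{x(2-x)}$, one obtains
\begin{equation*}
	f'(x) = -\frac{2}{x(2-x)} - \frac{2g(x)}{(2+x)^2} + \frac{\ln(x/2)}{2+x}.
\end{equation*}
The key observation is that all three terms are negative on $(0,2)$: the first is manifestly negative; for the second I first note that the auxiliary function $h(t)=t\ln t + 1 - t$ has $h'(t)=\ln t$, so $h$ attains its minimum $0$ at $t=1$ and $h(t)>0$ for $t\in(0,1)$, which gives $g(x)=h(x/2)>0$ on $(0,2)$; the third is negative because $x/2<1$ and hence $\ln(x/2)<0$. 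Summing, $f'(x)<0$ on $(0,2)$, which is the desired strict monotonicity. The existence and uniqueness of a root then follow from the intermediate value theorem.

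Finally, evaluating at $x=1$: $\ln(1)=0$ and $g(1) = \tfrac{1}{2}(1-\ln 2)$, so $f(1) = \tfrac{2}{3}\cdot\tfrac{1}{2}(1-\ln 2) = \tfrac{1}{3}(1-\ln 2)>0$ since $\ln 2 <1$. No step here looks like a serious obstacle; the only mildly delicate point is the sign check for $g$, which reduces to the standard convexity inequality $t\ln t \geq t-1$.
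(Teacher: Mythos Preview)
Your proof is correct and follows essentially the same approach as the paper: both compute $f'(x)$, observe that all three terms are negative on $(0,2)$ (using the convexity inequality $t\ln t + 1 - t > 0$ for the middle term), and deduce the remaining claims. Your version is in fact slightly more explicit than the paper's, which simply declares the boundary behavior ``standard'' and deduces $f(1)>0$ from the same convexity inequality without the numerical evaluation $f(1)=\tfrac{1}{3}(1-\ln 2)$.
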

	 \begin{proof}
	 The behavior of $f$ at the boundary is standard. The strict monotony requires the computation of the derivative:
		 \begin{align*}
			 f'(x) &= \left(\frac{-x-(2-x)}{x^2}\right)\times \frac{x}{2-x} + \frac{-2}{(2+x)^2}\left(\frac{x}{2}\ln\left(\frac{x}{2}\right)+1-\frac{x}{2}\right) + \frac{2}{2+x}\ln\left(\frac{x}{2}\right).
		 \end{align*}
		 Recalling that 
		 \begin{equation}\label{B1}
			 \frac{\hat\chi}{2}\ln\left(\frac{\hat\chi}{2}\right)+1-\frac{\hat\chi}{2}> 0,
		 \end{equation}
		 for each $x\in (0,2)$ because $x\mapsto x\ln(x)$ is strictly convex, all three terms in the expression of $f'(x)$ are negative, therefore 
		 \begin{equation*}
			 f'(x)<0
		 \end{equation*}
		 for all $x\in (0,2)$. The fact that $f(1)>0$ can also be deduced from \eqref{B1}. Lemma \ref{lem:B1} is proved.
	 \end{proof}

 \end{document}